\documentclass[12pt]{amsart}

\title{Arithmetic geometry of character varieties with regular monodromy}

\author{Masoud Kamgarpour$^1$}
\author{GyeongHyeon Nam$^2$}
\author{Anna Pusk\'{a}s$^3$}

\address{$^1$School of Mathematics and Physics, The University of Queensland, QLD 4072, Australia}
\email{\href{mailto:masoud@uq.edu.au}{masoud@uq.edu.au}}

\address{$^2$Department of Mathematics, Ajou University, Suwon 16499, Republic of Korea}
\email{\href{mailto:ghnam@ajou.ac.kr}{ghnam@ajou.ac.kr}}

\address{$^3$School of Mathematics \& Statistics,  University of Glasgow,  G12 8QQ, Glasgow, Scotland}
\email{\href{mailto:anna.puskas@glasgow.ac.uk}{anna.puskas@glasgow.ac.uk}}

\usepackage{textgreek, makecell, enumerate, xcolor, bm, mathtools, thmtools, float, cite, wrapfig, multicol}


\usepackage{fullpage} 
 \usepackage[alphabetic]{amsrefs}
\usepackage{amssymb}
\usepackage{amsmath}
\usepackage{mathtools} 
\usepackage{xcolor}
\usepackage{colonequals}
\usepackage{amsrefs}
\usepackage[linktoc=all]{hyperref}
\usepackage[overload]{textcase}
\usepackage{textgreek}
\usepackage{tikz-cd}
\usepackage{mleftright}

\makeatletter
\newcommand{\vo}{\vec{o}\@ifnextchar{^}{\,}{}}
\makeatother


\theoremstyle{plain}
\newtheorem{thm}{Theorem}
\numberwithin{thm}{subsection} 
\newtheorem{lem}[thm]{Lemma}
\newtheorem{prop}[thm]{Proposition}

\newtheorem{cor}[thm]{Corollary}
\theoremstyle{definition}
\newtheorem{defe}[thm]{Definition}

\newtheorem{rem}[thm]{Remark}


\definecolor{red}{rgb}{1,0,0}
\definecolor{orange}{rgb}{1,0.5,0}
\definecolor{purple}{rgb}{.5,.2,.8}
\definecolor{blue}{rgb}{.2,.2,.8}
\definecolor{green}{rgb}{.4,.6,.4}

\newcommand{\ra}{\rightarrow}
\newcommand{\diag}{{\mathrm{diag}}}


\def\det{\mathrm{det}}

\def\bes{\begin{equation*}}  \def\ees{\end{equation*}} 
\def\bi{\begin{itemize}}   \def\ei{\end{itemize}}
\def\ba{\begin{eqnarray}} \def\ea{\end{eqnarray}}    
\def\bl{\begin{align}}    \def\el{\end{align}}       
\def\bls{\begin{align*}}    \def\els{\end{align*}}

\newcommand{\bC}{\mathbb{C}}
\newcommand{\bCt}{\mathbb{C}^\times}

\newcommand{\bX}{\mathbf{X}}

\newcommand{\GL}{\mathrm{GL}}
\newcommand{\bR}{\mathbf{R}}

\newcommand{\cB}{\mathcal{B}}

\newcommand{\F}{\mathbb{F}}

\newcommand{\PGL}{\mathrm{PGL}}

\newcommand{\Irr}{\mathrm{Irr}}

\newcommand{\Hom}{\mathrm{Hom}}

\newcommand{\cS}{\mathcal{S}}

\newcommand{\kf}{{k}}
\newcommand{\Fq}{\mathbb{F}_q}
\newcommand{\Fp}{\mathbb{F}_p}

\newcommand{\fg}{\mathfrak{g}}

\newcommand{\bGm}{\mathbb{G}_m}

\newcommand{\Imag}{{\mathrm{Im}}}
\newcommand{\Ad}{{\mathrm{Ad}}}

\newcommand{\rank}{{\mathrm{rank}}}
\newcommand{\fT}{\mathfrak{T}}

\newcommand{\Tor}{\mathrm{Tor}}



\setcounter{tocdepth}{1}

\subjclass[2010]{14M35, 14D23, 11G25}
\keywords{Character varieties, representation varieties, $E$-polynomials}

\begin{document}

\begin{abstract}  We count points on a family of smooth character varieties with regular semisimple and regular unipotent monodromies. We show that these varieties are polynomial count and obtain an explicit expression for their $E$-polynomials using complex representation theory of finite reductive groups. 
As an application, we give an example of a cohomologically rigid representation which is not physically rigid. 
\end{abstract}

\maketitle

\tableofcontents

\section{Introduction and the main results} 
\subsection{Overview}
Character varieties of surface groups play a central role in diverse areas of mathematics such as the non-abelian Hodge theory \cites{Simpson92, Simpson94} and the geometric Langlands program \cites{BeilinsonDrinfeld, BenZviNadler}. The study of the topology and geometry of character varieties has been a subject of active research for decades. 
In their groundbreaking works \cite{HRV, HLRV}, Hausel, Letellier, and Rodriguez-Villegas counted points on character varieties associated to  punctured surface groups and $\GL_n$. Their work has led to much further progress in understanding the arithmetic geometry of character varieties, cf. 
\cite{dCHM, Letellier, BaragliaHekmati, Schiffmann, Mellit, LetellierRV, Ballandras, BridgerKamgarpour}.

Most prior research in this area has focused on character varieties for groups of type $A$. The only exceptions we are aware of are \cite{Cambo}, which investigates $\mathrm{Sp}_{2n}$ character varieties, and \cite{BridgerKamgarpour}, which explores character stacks for general reductive groups and compact surfaces. Consequently, our understanding of character varieties for general reductive groups remains quite limited compared to the extensive knowledge in type $A$. This situation parallels the state of character theory for finite groups of Lie type in the 1960s, before the work of Deligne and Lusztig \cite{DeligneLusztig}. It is important to note that for many applications, such as those in the Langlands program or mirror symmetry, a deeper understanding of character varieties for more general groups is essential. This paper aims to take the first step towards extending the program of Hausel, Letellier, and Rodriguez-Villegas beyond type $A$ to encompass more general groups.

In this work, we study character varieties associated with arbitrary reductive groups, assuming both regular semisimple and regular unipotent monodromies at the punctures. We count points on these varieties over $\Fq$ and demonstrate that the point count is given by a polynomial in $q$.\footnote{In a companion paper \cite{Bailey}, character varieties associated to generic semisimple classes are considered.} This polynomial, known as the ``$E$-polynomial" of the character variety, encodes significant information, including the number of connected components and the Euler characteristic. For further details on $E$-polynomials and their applications to character varieties, we refer the reader to the appendix of \cite{HLRV} or \cite[\S 2.2]{LetellierRV}. A notable feature of our approach is that it applies uniformly across all reductive groups $G$, avoiding a case-by-case analysis based on group type.

\subsection{Definition of character varieties} \label{ss:def}
   Fix non-negative integers $g$ and $n$. Let $\Gamma=\Gamma_{g,n}$ be the fundamental group of a compact orientable surface with genus $g$ and $n$ punctures; i.e., 
\[
\Gamma = \frac{\langle a_1, b_1, \ldots , a_g, b_g, c_1, \ldots , c_n\rangle}{[a_1,b_1] \cdots [a_g , b_g ]c_1 \cdots c_n}.
\] 
Let $G$ be a connected split reductive group over a field $k$ and $(C_1,\ldots ,C_n)$ an $n$-tuple of conjugacy classes of $G$. 
The associated \emph{representation variety} $\mathbf{R}=\mathbf{R}(C_1,...,C_n)$   is 
\[
\mathbf{R} \colonequals 
\left\lbrace \left.(A_1, B_1,\ldots ,A_g , B_g, S_1 ,\ldots , S_n) \in G^{2g}\times \prod_{i=1}^n C_{i} \, \right|\, [A_1,B_1]\cdots [A_g ,B_g] S_1 \cdots S_n = 1 \right\rbrace.
\]

This is an affine scheme of finite type over $k$. The group $G$ acts on $\mathbf{R}$ by conjugation and the corresponding GIT quotient
\[
\bX:=\bR/\!\!/G=\bR/\!\!/(G/Z)
\] 
(where $Z$ is the centre of $G$) is called the \emph{character variety} associated to $\Gamma$, $G$, and $(C_1,...,C_n)$. We also have the associated quotient stack $[\bX]:=[\bR/(G/Z)]$.

\subsubsection{Non-emptiness}  \label{s:emptiness}
Note that $\bR(k)$ (and therefore $\bX(k)$) is empty, unless there exist $S_i\in C_i(k)$ such that the product $S_1\cdots S_n$ is in $[G(k), G(k)]$. This is equivalent to requiring that \emph{for every} $S_i\in C_i(k)$ the product $S_1\cdots S_n$ is in $[G(k), G(k)]$.  Henceforth, we assume that this condition is satisfied. If $g>0$ then this condition is also sufficient for $\bR$ to be non-empty, because the commutator map $G\times G\ra [G,G]$ is surjective, cf. \cite{Ree}. 
In contrast, when $g=0$, deciding if $\bR$ is non-empty is subtle and is closely related to the Deligne--Simpson problem, cf. \cite{Simpson, Kostov}. We refer the reader to Theorem \ref{t:topology} for some new results regarding non-emptiness.

\subsection{Smoothness}  Character varieties are often singular.  Our first main result is a criteria for smoothness. We assume throughout that characteristic of $k$ is a prime which is \emph{very good} for $G$. This means that $p\neq 2$ in types $B, C, D$, $p\neq 2, 3$ in types $G_2, F_4, E_6, E_7$, $p\neq 2, 3, 5$ in $E_8$, and $p\nmid n+1$ for type $A_n$.

\begin{thm}\label{t:Smooth} 
Suppose $\bR$ is non-empty and $G/Z$ acts freely on it. Then 
\begin{enumerate} 
\item[(i)] $\bR$ is smooth of pure dimension
\[
\displaystyle \dim(\bR)=2g\dim(G)-\dim([G,G])+\sum_{i=1}^n\dim(C_i).
\]   
\item[(ii)] The canonical map $[\bX]\ra \bX$ from the quotient stack into the GIT quotient is an isomorphism. Thus, $\bR$ is a principal $G/Z$-bundle over $\bX$ in the \'{e}tale topology and $\bX$ is  smooth of pure dimension 
\[
\dim(\bX)=2g\dim(G)-2\dim([G,G])+\sum_{i=1}^n\dim(C_i).
\] 
\end{enumerate} 
\end{thm}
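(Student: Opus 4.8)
The plan is to deduce both parts from a deformation-theoretic/tangent-space analysis of the word map together with a descent argument. Let $\mu\colon G^{2g}\times\prod_i C_i\to[G,G]$ be the map sending the tuple to $[A_1,B_1]\cdots[A_g,B_g]S_1\cdots S_n$, so that $\bR=\mu^{-1}(1)$. First I would compute the differential $d\mu$ at a point $\rho\in\bR(\bar k)$: identifying tangent spaces to $G$ and to the conjugacy classes $C_i$ with appropriate subquotients of $\fg$, one gets the familiar group-cohomology description in which $\ker(d\mu_\rho)$ is a space of $1$-cocycles and $\mathrm{coker}(d\mu_\rho)$ is related to $H^2$, or rather to the coinvariants $\fg_\rho\colonequals H^0(\Gamma,\fg)^*$-type term coming from the image of $[G,G]$. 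The key point is that $\mu$ is smooth at $\rho$ (equivalently $d\mu_\rho$ is surjective onto $T_1[G,G]=[\fg,\fg]$) precisely when the stabiliser of $\rho$ in $G/Z$ is finite; and the freeness hypothesis forces this stabiliser to be trivial, hence in particular finite. The admissibility hypothesis on $\mathrm{char}(k)$ (Definition \ref{d:admissible}) is exactly what is needed to make these Lie-theoretic identifications valid in positive characteristic — e.g. to ensure $\mathrm{Lie}(C_i)$ has the expected dimension, that $\fg=[\fg,\fg]\oplus\mathfrak{z}$ appropriately, and that orbit maps are separable. Given surjectivity of $d\mu_\rho$ at every point, $\bR=\mu^{-1}(1)$ is smooth, and a dimension count $\dim\bR=\dim(G^{2g}\times\prod C_i)-\dim[G,G]=2g\dim G+\sum_i\dim C_i-\dim[G,G]$ gives (i); purity follows since every point lies in a component of this dimension.

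For (ii), the freeness of the $G/Z$-action on $\bR$, combined with the fact that $\bR$ is an affine scheme of finite type (so the action has closed orbits and the quotient is geometric), lets me invoke Luna-type slice results or the standard fact that a free action of an affine algebraic group on a quasi-affine scheme with $\mathrm{char}(k)$ admissible (so that orbit maps are separable, i.e. the action is "scheme-theoretically free") makes $\bR\to\bR/\!\!/(G/Z)$ an fppf — indeed étale-locally trivial — principal $G/Z$-bundle. Concretely: freeness implies the action map $G/Z\times\bR\to\bR\times_\bX\bR$ is an isomorphism, which is the definition of a torsor; étale-local triviality follows because $G/Z$ is smooth (special groups aside, one at least gets an fppf torsor, and since $G/Z$ is a smooth affine group this can be refined to étale-local triviality by a standard argument). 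That the quotient stack $[\bR/(G/Z)]$ agrees with $\bX$ is then immediate, since a torsor has trivial automorphisms. Finally $\bX$ is smooth because $\bR\to\bX$ is a smooth surjection and $\bR$ is smooth (smoothness descends along smooth surjective maps / can be checked étale-locally), and $\dim\bX=\dim\bR-\dim(G/Z)=\dim\bR-\dim[G,G]$, giving the stated formula.

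The main obstacle I anticipate is the positive-characteristic bookkeeping rather than any conceptual difficulty: making sure that "free action" in the scheme-theoretic sense (which is what one needs for the torsor statement and for the clean cohomological dimension count) genuinely follows from the stated hypothesis, and that the admissibility condition in Definition \ref{d:admissible} is strong enough to guarantee (a) separability of all the relevant orbit maps, (b) the identification $T_{S_i}C_i\cong\fg/\fg^{s_i}$ with the expected dimension, and (c) the decomposition of $\fg$ needed to see $d\mu_\rho$ surjects exactly onto $[\fg,\fg]$. A secondary point to handle carefully is that $\bR$ may be non-reduced a priori as $\mu^{-1}(1)$; but once $d\mu$ is everywhere surjective, $\mu^{-1}(1)$ is automatically smooth hence reduced, so this resolves itself. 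I would also double-check the edge cases $g=0$ and $n=0$ to confirm the dimension formulas degenerate correctly (e.g. $g=0$ with $[G,G]$ possibly not equal to $G$), since the non-emptiness discussion in \S\ref{s:emptiness} flags $g=0$ as delicate.
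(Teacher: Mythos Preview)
Your proposal is correct and follows the same overall architecture as the paper: realise $\bR$ as the fibre of the word map $F$ (your $\mu$), show $dF_\rho$ is surjective onto $[\fg,\fg]$ at every point, invoke the algebraic regular value theorem for (i), and for (ii) use general facts about free actions of reductive groups on affine schemes.

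The one substantive difference is in how surjectivity of the differential is established. You invoke a group-cohomology heuristic (cokernel of $d\mu_\rho$ dual to $H^0(\Gamma,\fg)\cap[\fg,\fg]$, which vanishes when the stabiliser is finite). The paper instead does this by hand: it writes $dF_\rho$ explicitly as a sum of terms $\Ad_g(1-\Ad_h)$ and uses non-degeneracy and $\Ad$-invariance of the Killing form on $[\fg,\fg]$ to show that any $t$ orthogonal to the image is fixed by all $\Ad_{A_i},\Ad_{B_i},\Ad_{S_j}$, hence lies in $C_\fg(H)\cap[\fg,\fg]$; the freeness hypothesis gives $C_G(H)=Z$, and admissibility (very good prime plus $p\nmid 2h^\vee$) is invoked precisely to get $\mathrm{Lie}(C_G(H))=C_\fg(H)$ and non-degeneracy of the Killing form, so this intersection is zero. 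Your cohomological picture is the correct conceptual framing, and the Killing-form pairing is exactly the avatar of the duality you have in mind; but for a \emph{punctured} surface the relevant duality is relative/parabolic and would need to be set up carefully, whereas the paper's explicit computation sidesteps this entirely. What the explicit route buys is a self-contained argument valid in positive characteristic with minimal machinery; what your route would buy, if made precise, is a cleaner conceptual statement and the ``finite stabiliser suffices'' strengthening (which the paper only records as a remark).

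For part (ii), the paper does not use Luna's slice theorem (which is delicate in positive characteristic) but instead cites Mumford's result that a reductive group acting on an affine scheme with finite stabilisers is automatically proper---so free implies scheme-theoretically free---and then Alper's theorem that for connected reductive $G$ acting freely on affine $X$, the canonical map $[X/G]\to X/\!\!/G$ is an isomorphism. This is close in spirit to your sketch but cleaner over arbitrary $k$.
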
 

Over complex numbers, theorems of this kind are by now standard, cf. \cite[Theorem 2.1.5]{HLRV} or \cite[\S 9.3]{Boalch}. However, some care is required when working with  arbitrary $G$ in positive characteristics. For completeness, we provide a proof in \S \ref{s:Smooth}.

 \subsubsection{Free action} 
We now consider a case where it is easy to see that $G/Z$ acts freely on $\bR$. Let $T$ be a maximal split torus of $G$. Following Steinberg \cite{Steinberg}, 
we call a semisimple element $S\in T$ \emph{strongly regular} if $C_G(S)=T$.

\begin{lem}\label{l:free}  Assume one of the $C_i$'s is strongly regular semisimple and another one is regular unipotent. Then the action of $G/Z$ on $\bR$ is free. 
\end{lem}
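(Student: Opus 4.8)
The plan is to show that any element $g \in G/Z$ stabilising a point of $\bR$ must be trivial. Let $(A_1,B_1,\ldots,A_g,B_g,S_1,\ldots,S_n) \in \bR$ be a point and suppose $g \in G$ represents an element of $G/Z$ fixing it under conjugation; we must show $g \in Z$. Conjugation fixes each $S_i$, so $g \in C_G(S_i)$ for all $i$. The key observation is that when one of the $C_i$'s, say $C_1$, is strongly regular semisimple, then $S_1$ lies in some maximal torus $T'$ and $C_G(S_1) = T'$, so immediately $g \in T'$. Hence $g$ is semisimple, and in fact lies in a maximal torus.

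Next I would use the regular unipotent condition. Suppose $C_2$ is regular unipotent, so $S_2 = u$ is a regular unipotent element. We already know $g \in C_G(u)$. The standard structure theory of centralisers of regular unipotent elements (see e.g. Springer--Steinberg, or Humphreys' book on conjugacy classes) says that $C_G(u) = Z \cdot C_u$ where $C_u$ is a connected unipotent abelian group of dimension equal to the rank of $G$; in particular every semisimple element of $C_G(u)$ lies in $Z$. Combining with the previous paragraph, $g$ is semisimple and lies in $C_G(u)$, hence $g \in Z$. This shows the stabiliser of every point is exactly $Z$, i.e. $G/Z$ acts freely.

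The main obstacle, and the place to be careful, is the interaction with the base field $k$, which is only assumed split reductive over an arbitrary field (subject to the admissibility hypothesis on the characteristic that is in force throughout this part of the paper). The structural facts about $C_G(u)$ for $u$ regular unipotent—that the reductive part of the centraliser is central—need to hold scheme-theoretically and over $k$, not just over an algebraic closure; one should either invoke a reference valid in good characteristic (the admissibility assumption should guarantee regular unipotent elements behave as in characteristic zero, in particular $\dim C_G(u) = \rank(G)$ and its reductive quotient is $Z$) or reduce to the algebraically closed case since freeness of a group action can be checked after base change to $\bar k$. Similarly, ``$C_G(S_1) = T$'' for strongly regular $S_1$ is by definition (following Steinberg) the statement that the scheme-theoretic centraliser is the maximal torus $T$, so that part is essentially immediate once we record that the centraliser being $T$ forces $g$ into a torus. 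I would structure the argument to first reduce to $\bar k$, then run the two-step centraliser argument above.
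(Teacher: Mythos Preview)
Your proposal is correct and follows essentially the same route as the paper: intersect $C_G(S)=T$ (strongly regular) with $C_G(u)=Z\cdot A$ for $A$ unipotent (regular unipotent), and observe the only semisimple elements in $Z\cdot A$ are central. Two small remarks: the paper allows $A$ to be \emph{possibly disconnected} (your ``connected abelian'' is not needed and not always true), and since $C_G(S)=T$ and $C_G(u)=ZA$ hold as equalities of group schemes, the intersection $T\cap ZA=Z$ already gives scheme-theoretic freeness without the detour through $\bar{k}$.
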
 

\begin{proof} The centraliser $C_G(N)$ of a regular unipotent element is a product $ZA$ of the centre and a (possibly disconnected) unipotent group $A$, cf.  Theorem 4.11 and 4.12 of \cite{Springer}. Thus, if $S\in T$ is a strongly regular element, then 
\[
C_G(S)\cap C_G(N) = T\cap ZA = Z. 
\]
Here, the last equality follows from the fact that the only semisimple elements of $ZA$ are the central ones. 
\end{proof}

 \begin{cor}\label{c:free} Under the assumptions of the lemma, $\bR$ and $\bX$ are smooth.
 \end{cor}

Note that under these assumptions, every representation in $\bR$ is stable under the action of $G/Z$ (in the sense of geometric invariant theory). However, such a representation is  not necessarily irreducible, see \S \ref{s:GL} for some examples. This kind of phenomenon is rare. Stable representations are often irreducible.

\subsection{Character varieties with regular monodromy} We  now define the main object of our study and state the  main results. 

\begin{defe} \label{d:main} 
Let $m$ and $n$ be integers satisfying $1\leq m<n$. Let $(C_1,...,C_n)$ be conjugacy classes satisfying the following properties: 
\begin{itemize} 
\item $C_1,...,C_m$ are classes of strongly regular elements $S_1,...,S_m\in T(k)$; 
\item $C_{m+1},...,C_n$ are regular unipotent classes; 
\item The product $S_1\cdots S_n$ is in $[G(k), G(k)]$. 
\end{itemize} 
We call the resulting character variety $\bX=\bX(C_1,...,C_n)$ a \emph{character variety with regular monodromy}.
\end{defe} 

In considering these character varieties, we were motivated by the work of Deligne and Flicker \cite{deligne2013counting}, who counted local systems with regular unipotent monodromy in the $\ell$-adic setting. In our case, the inclusion of regular semisimple monodromy is convenient because it implies that the action of $G/Z$ is free. In addition, it makes counting points over finite fields simpler.

\subsubsection{Polynomial property}\label{sssect:polyproperty}  For the rest of the introduction, we assume that $k=\Fq$.  Let us call a variety $Y$ over $k$ polynomial count if there exists a polynomial $|\!|Y|\!|\in \mathbb{Z}[t]$ such that  
\[
|Y(\mathbb{F}_{q^{n}})|=|\!|Y|\!|(q^{n}),\qquad\quad  \forall n\geq 1. 
\]
In this case, $|\!|Y|\!|$ is called the counting polynomial of $Y$. 

\begin{thm} \label{t:count} Suppose $G$ has connected centre. Then after a finite base change (i.e., replacing $\mathbb{F}_q$ with $\mathbb{F}_{q^\ell}$ for an appropriate $\ell$), the character variety $\bX$ with regular monodromy (Definition \ref{d:main}) is polynomial count. 
\end{thm}

Here,  by finite base change we mean replacing $k=\Fq$ by an extension $\F_{q^\ell}$. We refer the reader to \S \ref{s:countPrecise} for a discussion about how large this extension needs to be and the (rather complicated) explicit form of the counting polynomial. Examples of this counting polynomial for groups of low rank are given in \S \ref{s:Examples}.

 The first observation in the proof of the above theorem is that by Corollary \ref{c:free}, $G/Z$ is acting freely on $\bR$. Thus, the character variety $\bX$ agrees with the character stack $[\bX]$. 
Next, a formula that goes back to the work Frobenius and Mednykh states that
\begin{equation}\label{eq:Frob}
 \displaystyle |[\bX](k)|=  |Z(k)|  \sum_{\chi\in \Irr(G(k))} \left(\frac{|G(k)|}{\chi(1)}\right)^{2g-2}\prod_{i=1}^n \frac{\chi(C_i(k))}{\chi(1)} |C_i(k)|. 
\end{equation} 
For a very similar version of this mass formula, see \cite[(1.2.2)]{HLRV}. The classical version of this formula, counting $|\Hom(\pi_1(\Sigma), G)|$ with $G$ a finite group and $\Sigma $ a surface with punctures, is attributed to Frobenius (or Frobenius--Schur) in the genus zero case and Mednykh in the case without punctures, cf. \cite{mednykh}. 

Here, $\Irr(G(k))$ denotes the set of irreducible complex characters of the finite reductive group $G(k)$. Using Deligne--Lusztig theory, we reduce the evaluation of $|\bX(k)|$ to computing a certain sum of characters of the finite abelian group $T(k)$. This is Part I of the proof and is carried out in \S \ref{s:PartOne}. Part II concerns evaluating these character sums using Pontryagin duality and M\"obius inversion on the partially ordered set of closed subsystems of the root system of the Langlands dual group $G^\vee$. See \S\ref{s:PartTwo} for details of the proof.

\subsubsection{Remarks} 
\begin{enumerate} 
\item  The above theorem is new even for $G=\GL_n$, though we were informed by Letellier that in this case, it can be extracted from results of \cite{Letellier}. 

\item We expect the theorem holds if $Z(G)$ is disconnected; however, the analysis becomes more intricate due to the familiar pitfall that centralisers of semisimple elements in $G^\vee$ need not be connected. 

\end{enumerate}

As noted in the overview section, whenever the number of points of a variety over a finite field is a polynomial, this polynomial gives important topological implication for the variety. 
 We now discuss the topological implications of Theorem \ref{t:count}. We assume that a finite base change is already performed so that $\bX$ is polynomial count. Note that if $G$ is commutative, then $\bX=G^{2g}$ (see \S \ref{s:Torus}); thus, we may assume $G$ is non-commutative.

 \begin{thm}\label{t:topology}  Suppose $Z(G)$ is connected and  $G$ is non-commutative.
\begin{enumerate} 
\item[(i)]  Suppose either $g>0$ or $n>3$. Then $|\pi_0(\bX)|=|\pi_1([G,G])|$; in particular, $\bX$ is non-empty. 
\item[(ii)] Suppose either $g>0$ or $n>m+2$. Then the Euler characteristic of $\bX$ is $0$. 
\end{enumerate} 
\end{thm} 

We prove this theorem by analysing the counting polynomial of $\bX$; see  \S \ref{s:topology} for details. 

\subsubsection{Remarks} 

\begin{enumerate} 
\item  If $(g,n)=(0,2)$ then $\bR$ and $\bX$ are empty. (This is because $1\notin C_1C_2$, cf. Definition \ref{d:main}.) The case $(g,n)=(0,3)$ exhibits interesting properties.  For instance, if $G=\GL_2$ and if we have two regular semisimple conjugacy classes, then $\bX$ can either be a single point or two points. See \S \ref{s:subtle2} for details. 
\item In \S \ref{s:subtle} we consider the case where $G$ equals $\GL_2$ or $\GL_3$ and $(g,n,m)$ equal to $(0,4,3)$ or $(0,4,2)$. We show that in these cases, the Euler characteristic may be  non-zero. 
The fact that the Euler characteristic behaves differently in genus $0$ is also observed in \cite[Remark $5.3.4$]{HLRV}.  
   \end{enumerate}

\subsection{Applications to rigidity} We use the notation of \S \ref{ss:def} and assume $g=0$. Let $\rho: \Gamma \ra G$ be an element of $\bR=\bR(\Gamma, G, C)\subseteq \Hom(\Gamma, G)$. Recall the following definitions: 
\begin{enumerate} 
\item  $\rho$ is  \emph{irreducible} if $\rho(\Gamma)$ is not contained in any proper parabolic subgroup of $G$. 
\item $\rho$ is  \emph{cohomologically rigid} (\cite[\S 7]{FG}) if it is irreducible and 
\begin{equation}\label{eq:rigidity} 
\sum_{i=1}^n \dim(C_i) = 2\dim([G,G]).
\end{equation} 
\item $\rho$ is \emph{physically rigid} if it is irreducible and determined, up to isomorphism, by the conjugacy classes $C_1,\dots, C_n$. (This is equivalent to requiring that every element of $\bR$ is isomorphic to $\rho$). 
\end{enumerate} 

For $G=\GL_n$, Katz has proven that physical and cohomological rigidity are equivalent for elements of $\Hom(\Gamma, \GL_n)$ \cite[\S 1]{KatzRigid}. It has been expected that these two notions are not equivalent for $G\neq \GL_n$, but no (counter)examples were known. Theorem \ref{t:topology} leads to an example of a cohomologically rigid representation which is not physically rigid. 

This example is constructed explicitly in \S \ref{ss:rigid} by considering the character variety with regular monodromy associated to $(G,g,n)=(\PGL_2, 0, 3)$. Using Theorem \ref{t:count}, one verifies that $|\bX|=2$.  By choosing eigenvalues carefully, one can show that elements of $\bR$ are irreducible.  In view of Theorem \ref{t:Smooth}, the fact that $\dim(\bX)$ is zero implies that
\eqref{eq:rigidity} is satisfied, so every element of $\bR$ is cohomologically rigid. But the fact that $| \bX | = 2$ implies that there exist two representations in $\bR$ which are not isomorphic. We conclude that representations in $\bR$ are cohomologically but not physically rigid.

\subsection{Notation}  \label{ss:notation} 
Throughout the paper, $k$ denotes a field of characteristic $p$,  $G$ a connected split reductive group over $k$, $Z=Z(G)$ the centre of $G$, $T$  a maximal split torus of $G$,  $B$ a Borel subgroup containing $T$, $U$ the unipotent radical of $B$, $W$ the Weyl group, $r:=\dim(T\cap [G,G])$ the semisimple rank, $(X,  \Phi, X^\vee, \Phi^\vee)$ the root datum,  $\langle \Phi \rangle$ the root lattice, and $\langle \Phi^\vee \rangle$  the coroot lattice. We assume that $Z(G)$ is connected and that the characteristic of $k$ is very good for $G$. 

\subsubsection{} 
For each root subsystem $\Psi \subseteq \Phi^\vee$ (including the empty one), we have a reflection  subgroup $W(\Psi)\subseteq W$ generated by reflections associated to roots $\alpha\in \Psi$. Let $\mathcal{S}(\Phi^\vee)$ denote the set of closed subsystems of $\Phi^\vee$. (Recall a subset $\Psi \subseteq \Phi^\vee$ is a closed subsystem if it is itself a root system, and $\alpha,\beta\in \Psi ,$ $\alpha+\beta \in \Phi^\vee$ implies $\alpha+\beta \in \Psi $.) This is a partially ordered set (poset) with the ordering given by inclusion. The M\"obius function associated to this poset plays a crucial role in our point count. 

\subsubsection{} 
Let $T^\vee:=\mathrm{Spec}\, k[X]$ be the dual torus and $G^\vee$ the Langlands dual group of $G$ over $k$. In other words, $G^\vee$ is a connected split reductive group over $k$ with maximal split torus $T^\vee$ and root datum $(X^\vee, X, \Phi^\vee, \Phi)$. For a finite abelian group $A$, we let $A^\vee:=\Hom(A, \bCt)$ denote its Pontryagin dual. When $k$ is a finite field, we identify the Pontryagin dual $T(k)^\vee$ with the Langlands dual $T^\vee(k)$ as follows.  

\subsubsection{}\label{sss:dualTorus} 
Let $\mu_{\infty,p'}(\bC)$ denote the set of  roots of unity in $\bCt$ whose order is prime to $p$. We choose, once and for all, two isomorphisms: 
\begin{equation}\label{eq:rootsOfUnity}
 \overline{\Fp}^\times \simeq (\mathbb{Q}/\mathbb{Z})_{p'} \qquad \textrm{and} \qquad (\mathbb{Q}/\mathbb{Z})_{p'}\simeq \mu_{\infty,p'}(\bC). 
\end{equation} 
 As noted in \cite[\S 5]{DeligneLusztig}, this induces an isomorphism  between the  Pontryagin dual and the Langlands dual: 
\[
T(k)^\vee\simeq T^{\vee}(k).
\]

\subsection{Acknowledgements} 
 We would like to thank David Baraglia, Phillip Boalch, Nick Bridger, Valentin Buciumas, Jack Hall,  Konstantin Jakob, Emmanuel Letellier, Paul Levy, Peter McNamara, Dinakar Muthiah, Arun Ram, Ryan Vinroot, Matthew Spong,  Ole Warnaar, Geordie Williamson, and Yang Zhang for helpful conversations. 
 
MK was supported by Australian Research Council Discovery Project DP200102316. GN was supported by an Australian Government Postgraduate Award and the National Research Foundation of Korea (NRF) grant funded by the Korea government (MSIT) (No. RS-2024-00334558). AP was supported by an Australian Research Council Discovery Early Career Research Award DE200101802.

\section{Smoothness of character varieties} \label{s:Smooth} The goal of this section is to prove Theorem \ref{t:Smooth}. Our proof of Part (i) of the theorem is modelled on \cite[Theorem 2.1.5]{HLRV}. The  novelty here is that we work in the reductive setting and avoid using matrices.  We first give the proof in characteristic $0$ and then explain the modifications required in positive characteristics.

\subsection{Proof of Part (i) in characteristic $0$}\label{ss:char0}   We start with some notation. 

\subsubsection{} 
 For each $h\in G$, let $l_h: G\ra G$ denote the left multiplication map. This is an isomorphism of varieties; thus, it induces an isomorphism of vector spaces 
\[
dl_h: T_{g} G \xrightarrow{\sim} T_{hg} G.
\] 
Similarly, we have the right multiplication map $r_h$  and its derivative 
\[
dr_h: T_g G \xrightarrow{\sim} T_{gh} G.
\] 
Thus, 
$ dr_{h^{-1}}\circ dl_{h}$ is a linear map $T_g G \rightarrow T_{hgh^{-1}} G$. 
If $g$ is the identity of $G$, then $T_gG=:\fg$ is the Lie algebra of $G$ and this is a linear automorphism $\fg\ra \fg$ which equals the adjoint map $\Ad_h$.

\subsubsection{} 
Let
\[
F: G^{2g}\times \prod_{i=1}^n C_{_i}\ra [G,G]
\]
be the morphism of varieties 
defined by
  \[
F(A_1, B_1, \ldots  ,A_g , B_g, S_1 , \ldots , S_n):= [A_1,B_1] \cdots  [A_g ,B_g] S_1 \cdots  S_n. 
\]
Note that our assumption in \S \ref{s:emptiness} implies that the image of $F$ is indeed in $[G,G]$. 
By definition, the representation variety is $\bR=F^{-1}(1)$.

\subsubsection{}  Now consider a point 
\[
r=(A_1, B_1, \ldots  ,A_g , B_g, S_1 , \ldots , S_n)\in \bR=F^{-1}(1).
\] 
It follows from the Regular Value Theorem (see, e.g., \S 25 of \cite{Ravi}) that $\bR$ is smooth and equidimensional if the differential 
\[
dF_r: T_r(G^{2g}\times \prod_{i=1}^n C_{_i}) \ra T_1([G,G])=[\fg,\fg]
\]
 is surjective.   To establish surjectivity, we construct an auxiliary surjective map 
 \[
 \phi=\phi_r: \fg^{2g+n}\ra [\fg,\fg]
 \]
  and prove that the image of $dF_r$ equals the image of $\phi$. To this end, we need yet another auxiliary map $\Psi=\Psi_r$.

\subsubsection{} 
Consider the morphism of varieties 
\[
\Psi=\Psi_r:  G^{2g+n} \ra G^{2g}\times \prod_{i=1}^n C_i,
\]
defined by 
\[
\Psi(x_1, y_1,...,x_{g}, y_{g}, z_1,...,z_n):=(A_1 x_1, B_1 y_1 , ..., A_g x_g, B_g y_g, z_1^{-1} S_1 z_1,..., z_{n}^{-1} S_n z_n).
\]
In other words, 
\[
\Psi = (l_{A_1},l_{B_1},\ldots l_{A_g},l_{B_g},\eta_1,\ldots ,\eta_n), 
\]
 where $\eta_j:G\rightarrow C_j$ is given by $\eta_j(g)=g^{-1}S_jg$. 

\subsubsection{} 
Observe that $\Psi(1)=r$. Thus, taking the differential at $1$, we obtain a linear map  
\[
d\Psi_1: \fg^{2g+n} \ra T_r\left(G^{2g}\times \prod_{i=1}^n C_{_i}\right)=\prod_{i=1}^g(T_{A_i}G\times T_{B_i}G)\times \prod_{j=1}^{n}T_{S_j}C_j.
\]
The auxiliary function $\phi$ is defined as follows:  
\[
\phi: \fg^{2g+n}\ra [\fg, \fg], \qquad \phi:= dF_r \circ d\Psi_1.
\]
To prove that $\phi$ has the desired properties, we first need some notation.

\subsubsection{}  For $i\in \{1,2,...,g\}$ and $j\in \{1,2,...,n\}$, let  
\[
X_i\in T_{A_i}G,\qquad Y_i\in T_{B_i}G,\qquad \textrm{and} \qquad Z_j\in T_{S_j}C_j\subseteq T_{S_j}G.
\]
Let $X_i'$, $Y_i'$, and $Z_j'$ be elements of $\fg$ satisfying the following:
\[
X_i'=(dl_{A_i})^{-1}(X_i),\qquad Y_i'=(dl_{B_i})^{-1}(Y_i),\qquad -Z_j'\in (1-\Ad_{S_j})^{-1}((dr_{S_j})^{-1}(Z_j)). 
\]
Note that $dl_{A_i}$ and $dl_{B_i}$ are invertible and $d\eta_j=dr_{S_j}\circ (\Ad_{S_j}-1)=dl_{S_j}-dr_{S_j}:\fg\rightarrow T_{S_j}C_j$ is surjective \cite[\S 2]{richardson1967conjugacy}. Thus, the elements $X_i', Y_i', Z_j'$ do exist.

In addition, set 
\[
A_i \sharp B_i := [A_1,B_1]\cdots [A_{i},B_{i}]
\]
and 
\[
\overline{S_{j}}:=\prod_{t=j}^{n}S_t^{-1}. 
\]
For convenience, set $\overline{S_{n+1}}:=1$.

\begin{prop}\label{p:derivativeform_phi}
 We have 
\[
dF_r(X_1,Y_1,\ldots ,X_g,Y_g,Z_1,\ldots,Z_n) = \phi (X_1',Y_1',\ldots ,X_g',Y_g',Z_1',\ldots ,Z_n').
\]
Moreover, this equals  
\[
\sum_{i=1}^g \Ad_{(A_{i-1} \sharp B_{i-1})A_i}(\left(1-\Ad_{B_i}\right) X_i')+ 
\sum_{i=1}^g \Ad_{(A_{i-1} \sharp B_{i-1})A_iB_i}((1-\Ad_{A_i^{-1}}) Y_i')+
\sum_{j=1}^{n} \Ad_{\overline{S_{j+1}}}((1-\Ad_{S_j})(Z_j')).
\] 
\end{prop}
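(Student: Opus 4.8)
The plan is to compute $dF_r$ directly by decomposing $F$ into elementary pieces and applying the chain rule, tracking everything on the Lie algebra via left/right translations. First I would establish the first equality $dF_r(X_1,\ldots,Z_n)=\phi(X_1',\ldots,Z_n')$: this is immediate from the definitions, since $\phi=dF_r\circ d\Psi_1$ and the primed elements are, by construction, preimages of the unprimed tangent vectors under $d\Psi_1$. Indeed, $d\Psi_1$ acts coordinate-wise as $(dl_{A_i},dl_{B_i},d\eta_j)$, so $d\Psi_1(X_1',\ldots,Z_n')=(dl_{A_i}X_i',\ldots,d\eta_j Z_j')=(X_i,\ldots,Z_j)$ provided we check that $d\eta_j(Z_j')=Z_j$. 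Here one uses that $d\eta_j=dr_{S_j}\circ(\Ad_{S_j}-1)$ (the derivative of $z\mapsto z^{-1}S_jz$ at $z=1$, as recorded in the footnote), so $d\eta_j(Z_j')=dr_{S_j}(\Ad_{S_j}-1)(Z_j')=dr_{S_j}(1-\Ad_{S_j})(-Z_j')=Z_j$ by the defining relation for $Z_j'$.

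The substance is the explicit formula. I would write $F=\mu\circ(\kappa_1,\ldots,\kappa_g,\mathrm{id},\ldots,\mathrm{id})$ where $\kappa_i(A_i,B_i)=[A_i,B_i]$ and $\mu$ is iterated multiplication $G^{g+n}\to G$, and compute the differential of a product using the Leibniz rule for the multiplication map: for $m\colon G\times G\to G$, one has $dm_{(g,h)}(u,v)=dr_h(u)+dl_g(v)$. Iterating this over the word $[A_1,B_1]\cdots[A_g,B_g]S_1\cdots S_n$ and evaluating at the point $r$ where the total product is $1$, each variable's contribution gets conjugated by the appropriate partial product sitting to its left and right. The commutator block contributes, after using $d\kappa_{(A,B)}$ and the standard identity $d([\cdot,\cdot])$, the two terms involving $(1-\Ad_{B_i})X_i'$ and $(1-\Ad_{A_i^{-1}})Y_i'$ conjugated by $A_{i-1}\sharp B_{i-1}$ times $A_i$ (resp. $A_iB_i$); the $S_j$ block contributes $dr_{\overline{S_{j+1}}^{-1}}$ applied to $dl_{S_1\cdots S_{j-1}}$ applied to the tangent vector $Z_j\in T_{S_j}C_j$, and rewriting $Z_j$ via $Z_j=dr_{S_j}(1-\Ad_{S_j})(-Z_j')$ and simplifying the conjugations (using that the full product is $1$, so the left partial product equals the inverse of $\overline{S_{j+1}}\cdot S_j$ up to the commutator part which has already been folded in) yields $\Ad_{\overline{S_{j+1}}}((1-\Ad_{S_j})(Z_j'))$.

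The main obstacle I anticipate is purely bookkeeping: getting the conjugating elements exactly right, i.e. verifying that the partial product to the left of the $i$-th commutator factor contributes precisely $\Ad_{(A_{i-1}\sharp B_{i-1})A_i}$ and $\Ad_{(A_{i-1}\sharp B_{i-1})A_iB_i}$, and that the $S_j$ terms come out as $\Ad_{\overline{S_{j+1}}}$ and not some shifted version. The cleanest way to manage this is to prove the formula by induction on the length of the word, or equivalently to observe that $dF_r(\cdots)=\sum(\text{conjugate of the local derivative of each letter})$ where for a letter $w$ appearing in position with left-product $P$ and right-product $Q$ (so $PwQ=1$ at $r$), the contribution of a tangent vector at $w$ is $\Ad_P$ of its left-translate-to-identity; then one substitutes the specific left-products. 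I would also remark that surjectivity of each $d\eta_j$ onto $T_{S_j}C_j$ (hence existence of $Z_j'$), already noted in the footnote citing \cite{richardson1967conjugacy}, is what makes the reduction to $\phi$ legitimate. No genuinely hard step is expected beyond careful tracking of left versus right translation; the formula then follows.
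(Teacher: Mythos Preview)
Your proposal is correct and follows exactly the approach the paper indicates: the paper's own proof is simply ``This follows from repeated application of the chain rule and is left as an exercise,'' and your outline carries out precisely that exercise via the Leibniz rule for the multiplication map and the identification of $d\eta_j$. Your treatment is in fact more detailed than what the paper provides.
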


\begin{proof} This follows from repeated, straightforward, but quite tedious, application of the chain rule. 
\end{proof}

The first statement of the proposition implies that $\mathrm{Im}(dF_r)=\mathrm{Im}(\phi)$. (Alternatively, this follows from the definition of $\phi$ and the surjectivity of $d\Psi _1$.) We now use the second statement to show that $\phi$ is surjective. 

\subsubsection{} Let  $H$ be the closed subgroup of $G$ generated by the elements $A_1,B_1,\dots, A_g, B_g,$  $S_1,...,S_n$. We then have 
\begin{equation} \label{eq:centraliser}
Z(G)=C_{G}(H)\implies \mathrm{Lie}(Z(G))=\mathrm{Lie}(C_{G}(H)) = C_{\fg}(H)\implies [\fg,\fg]\cap C_{\fg}(H)=0.
\end{equation} 
Here $C_{\fg}(H)$ is defined as follows: 
\[
C_{\fg}(H):=\{x\in \fg \, | \, \Ad_h(x)=x,\, \, \forall h\in H\}.
\]
As the Killing form  $K=K_{[\fg,\fg]}$ is invariant and non-degenerate, we conclude that for every $h\in G$
\begin{equation}\label{eq:K-Ad-annih}
K(t,(1-\Ad_h) v)=0\quad \forall v \in [\mathfrak{g},\fg] \quad \implies \quad \Ad_ht=t.
\end{equation}
To see this, observe that
\[
K(t,(1-\Ad_h) v) = K(t,v)-K(t, \Ad_h v)= K(t,v)-K(\Ad_{h^{-1}} t, v)=K(t-\Ad_{h^{-1}} t, v).
\]
Returning to the map $\phi$, setting all but one of $X_i'$, $Y_i'$, and $Z_j'$ to zero, and repeatedly applying  observation \eqref{eq:K-Ad-annih}, it follows that if $t\in [\fg,\fg]$ such that $K(t,\Imag \phi )=0,$ then $t$ is fixed by every one of $\Ad_{A_i}$, $\Ad_{B_i}$, and $\Ad_{S_j}$. This implies $t\in C_{\fg}(H)$ and thus $t=0$. It follows that $\Imag(\phi)=[\fg,\fg]$, as required. 
 \qed

\subsection{Proof of Part (i) in positive characteristics} \label{s:positive} In positive characteristics, the notion of Lie algebra can be ambiguous so let us explain what we mean by this. The group $G$, being connected split reductive, has a canonical $\mathbb{Z}$-model $\mathtt{G}$. Let $\mathtt{g}:=\mathrm{Lie}(\mathtt{G})$ denote the Lie ring scheme of $\mathtt{G}$ over $\mathbb{Z}$. Then the Lie algebra $\fg$ is, by definition, the base change of $\mathtt{g}$ to $k$.

\subsubsection{}
There are three parts of the above proof that require care in positive characteristic: 
\begin{enumerate} 
\item[(i)] 
The equality  $C_{\mathfrak{g}}(H)=\mathrm{Lie}(C_G(H))$ does not always hold. This is the issue of ``separability" in positive characteristics. However, if we assume that $p$ is a very good prime for $G$  then this equality does hold, cf. \cite{BMRT}.
 \item[(ii)] 
The Killing form on $[\fg, \fg]$ may be degenerate. However, if we assume $p$ is very good for $G$, then a non-degenerate $G$-invariant bilinear form on $\fg$ exists, cf. \cite{Letellier}, and the proof goes through if we use this invariant form. 

 \item[(iii)] 
The map $d\eta _j:\fg\rightarrow T_{S_j}C_j$ is not necessarily surjective \cite[Lemma 2.1]{richardson1967conjugacy}. However, this is actually not needed. The first statement of Proposition \ref{p:derivativeform_phi} holds as long as $Z_j\in \Imag(d\eta_j).$ This implies $\Imag (dF_r)\supseteq \Imag (\phi )$ which is sufficient for the proof. 
\end{enumerate} 

The upshot is that the theorem and its proof are valid, provided we assume that $p$ is very good for $G$.

\subsection{General facts about actions of group schemes}  To prove part (ii) of the theorem, we need some facts about actions of group schemes. These facts are well-known to the experts but we could not find an appropriate reference in the literature. We momentarily use a more general notation than the rest of the paper. 
So let  $G$ be a group scheme acting on a scheme $X$.  Let $\varphi: G\times X\ra X\times X$ be the morphism 
\[
\varphi(g,x):= (g.x, x).
\]
Following \cite{Mumford}, we say that the action is  
 \begin{itemize} 
 \item    \emph{proper} if $\varphi$ is proper; 
 \item  \emph{free} if for every scheme $S$, the action of $G(S)$ on $X(S)$ is free; equivalently,  $\varphi$ is a monomorphism. 
 \item   \emph{scheme-theoretically free} if $\varphi$ is a closed immersion; i.e., a proper monomorphism.  
 \end{itemize}

\subsubsection{Example}
 The action of $\mathbb{G}_m$ on $\mathbb{A}^2-\{0\}$ given by $g.(x,y)=(gx, g^{-1}y)$ is free but not scheme-theoretically free. The GIT quotient is the affine line with a double point which is not separated.

 \subsubsection{} By definition, a free action is scheme-theoretically free if and only if it is proper.  In favourable situations, properness is automatic:
 
 \begin{prop}  Let $G$ be a connected reductive group over a field $k$ acting on an affine  scheme $X$ of finite type over $k$. Suppose the geometric points of $X$ have finite stabilisers under the action of $G$. Then, the action is proper. 
 \end{prop}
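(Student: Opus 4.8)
The plan is to prove that $\varphi\colon G\times X\to X\times X$, $(g,x)\mapsto (g.x,x)$, is proper. Since $G\times X$ and $X\times X$ are affine, $\varphi$ is automatically separated and of finite type, so by the valuative criterion (valid over discrete valuation rings here, as everything is Noetherian over $k$) it suffices to establish the existence part: given a DVR $R$ with fraction field $K$, points $x',y\in X(R)$, and $g_K\in G(K)$ with $g_K.x'_K=y_K$, one must produce $g\in G(R)$ restricting to $g_K$; then $g.x'=y$ follows automatically, since $X$ is separated and so $X(R)\hookrightarrow X(K)$.

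First I would record the geometric consequences of the hypothesis. For a geometric point $\bar x$ of $X$, the stabilizer $G_{\bar x}$ is an affine group scheme of finite type and dimension $0$ over its residue field, hence finite; so every orbit has dimension $\dim G$. If an orbit $O$ were not closed, then $\overline O\setminus O$ would be a nonempty closed $G$-stable set of dimension $<\dim G$ which is nonetheless a union of orbits each of dimension $\dim G$ -- a contradiction. Thus all orbits are closed, and the classical theory of affine GIT quotients by reductive groups shows that $\pi\colon X\to Y:=\mathrm{Spec}\,k[X]^G$ is a geometric quotient: it is surjective and submersive, and its geometric fibres are exactly the $G$-orbits. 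Crucially, $Y$ is affine, hence separated.

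Now run the valuative criterion. Since $x'_K$ and $y_K$ lie in a common orbit we have $\pi(x'_K)=\pi(y_K)$ in $Y(K)$; as $Y$ is separated, the two $R$-points $\pi\circ x'$ and $\pi\circ y$ of $Y$, agreeing over $\mathrm{Spec}\,K$, must coincide, so $\pi(x'_s)=\pi(y_s)$ over the closed point and hence $x'_s,y_s$ lie in one orbit over the residue field. Therefore the transporter $\mathcal T:=\{g\in G_R:\ g.x'=y\}$ -- a closed subscheme of $G_R$ because $X$ is separated -- is nonempty over both points of $\mathrm{Spec}\,R$, and each fibre is a coset of the relevant finite stabilizer, so $\mathcal T\to\mathrm{Spec}\,R$ is quasi-finite and surjective. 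The main obstacle is to upgrade this to: $\mathcal T$ is \emph{finite} over $R$. Granting finiteness, the scheme-theoretic closure of $g_K$ in $\mathcal T$ is $\mathrm{Spec}$ of a domain finite over $R$ with fraction field $K$, hence equal to $\mathrm{Spec}\,R$ since $R$ is integrally closed, which gives the desired section $\mathrm{Spec}\,R\to\mathcal T\subseteq G_R$.

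To prove finiteness of $\mathcal T$ I would use that each (closed) orbit is affine and that the orbit map $G\to G\cdot x\cong G/G_x$ is finite and faithfully flat, being a quotient by the finite subgroup scheme $G_x$; thus over any single closed orbit the transporter scheme is finite, and feeding this fibral statement over $Y$ into Zariski's Main Theorem -- applied to the quasi-finite separated morphism $\mathcal T\to\mathrm{Spec}\,R$, to see that the compactifying boundary meets no orbit and is therefore empty -- yields that $\mathcal T$ is proper, hence finite, over $R$. Equivalently, one can package the whole argument in stacky language: finite stabilizers make $[X/G]$ a Noetherian algebraic stack with finite inertia whose good moduli space is the separated scheme $Y$; the Keel--Mori theorem then makes $[X/G]\to Y$ proper, and separatedness of $Y$ forces the diagonal of $[X/G]$ -- equivalently $\varphi$ -- to be proper. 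In either packaging, I expect the single genuinely non-formal point to be the properness of $\mathcal T$ (equivalently, of $G\times X\to X\times_Y X$, equivalently, of the inertia of $[X/G]$); everything else is formal.
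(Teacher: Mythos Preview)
The paper does not prove this proposition at all: its entire proof is the one-line citation ``This is \cite[Proposition 0.8]{Mumford}.'' So there is no argument in the paper to compare against, only the reference; your attempt is already more than the paper provides.

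Your architecture is sound: all orbits are closed, $\pi\colon X\to Y=X/\!\!/G$ is a geometric quotient, and the valuative criterion reduces properness of $\varphi$ to finiteness of the transporter $\mathcal T\to\mathrm{Spec}\,R$. You also correctly isolate this last step as the only non-formal one.

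The gap is that neither of your two sketches for that step actually closes it. In the Zariski's Main Theorem route, ZMT gives an open immersion $\mathcal T\hookrightarrow\bar{\mathcal T}$ with $\bar{\mathcal T}\to\mathrm{Spec}\,R$ finite, but $\bar{\mathcal T}$ is an abstract finite $R$-scheme with no a priori map to $G_R$; the phrase ``the compactifying boundary meets no orbit'' has no meaning for points of $\bar{\mathcal T}\setminus\mathcal T$, so the conclusion that the boundary is empty is unjustified. In the Keel--Mori route, the hypothesis of that theorem is \emph{finite} inertia, i.e.\ that $\{(g,x):g.x=x\}\to X$ is a finite morphism; finite stabilisers at geometric points give only that this morphism is quasi-finite (and affine), which is strictly weaker, and upgrading quasi-finite to finite here is exactly equivalent to what you are trying to prove. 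So the Keel--Mori formulation is a restatement, not a proof. The genuine input you are missing is the GIT separation property for reductive group actions on affines (disjoint closed invariant subsets are separated by invariants), which is what Mumford uses; once you invoke that, the valuative-criterion argument can be completed.
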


\begin{proof}  This is \cite[Proposition 0.8]{Mumford}. 
\end{proof} 

\begin{cor} Let $G$ be a connected reductive group over a field $k$ acting on an affine scheme $X$ of finite type over $k$. 
If the action is free, then it is scheme-theoretically free. 
\end{cor}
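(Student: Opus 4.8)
The plan is to read this off the Proposition together with the characterisation recalled immediately above it: for a reductive $G$ acting on an affine finite-type scheme, a free action is scheme-theoretically free if and only if it is proper (since ``scheme-theoretically free'' means $\varphi$ is a closed immersion, i.e.\ a proper monomorphism, while ``free'' means $\varphi$ is a monomorphism). So it suffices to show that a free action of a connected reductive group on an affine finite-type $k$-scheme is proper, and by the Proposition this in turn reduces to checking that the geometric points of $X$ have finite stabilisers. We will in fact see they are trivial.

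First I would unwind what freeness says about stabilisers. Let $x\colon \mathrm{Spec}\,\Omega\to X$ be a geometric point, $\Omega$ algebraically closed, and let $\mathrm{Stab}_G(x)\subseteq G_\Omega$ be the stabiliser group scheme, i.e.\ the fibre of $\varphi$ over the point $(x,x)\in (X\times X)(\Omega)$. For every $\Omega$-algebra $\Omega'$, the $\Omega'$-points of $\mathrm{Stab}_G(x)$ are exactly the $g\in G(\Omega')$ fixing the image of $x$ in $X(\Omega')$; since the action of $G(\mathrm{Spec}\,\Omega')$ on $X(\mathrm{Spec}\,\Omega')$ is free, this set is trivial. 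As this holds for all $\Omega'$, the group scheme $\mathrm{Stab}_G(x)$ is trivial, hence in particular finite.

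Now the hypotheses of the Proposition are met: $G$ is connected reductive over the field $k$, $X$ is affine of finite type over $k$, and the geometric points of $X$ have finite stabilisers by the previous step. Therefore the action is proper, i.e.\ $\varphi\colon G\times X\to X\times X$ is proper. Combining this with the fact that $\varphi$ is a monomorphism (which is precisely the freeness hypothesis) and with the equivalence ``free $+$ proper $\Leftrightarrow$ scheme-theoretically free'' noted before the Proposition, we conclude that $\varphi$ is a closed immersion, i.e.\ the action is scheme-theoretically free. This completes the proof.

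I do not anticipate a real obstacle here; the only point needing a little care is the bookkeeping in the first step, namely making sure that freeness in the functor-of-points sense annihilates the whole stabiliser \emph{group scheme} at a geometric point and not merely its $\Omega$-rational points, so that ``finite stabilisers'' in the sense required by the Proposition genuinely holds.
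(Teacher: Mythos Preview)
Your proof is correct and follows exactly the route the paper takes: the paper's own proof is simply ``This is immediate from the previous proposition,'' and your argument just unpacks that sentence by verifying that freeness forces trivial (hence finite) stabilisers at geometric points, so the Proposition yields properness, and free $+$ proper gives scheme-theoretically free.
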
 

\begin{proof} This is immediate from the previous proposition. 
\end{proof}

  \subsubsection{Quotients} Let  $G$ be a group scheme of finite type over a field $k$ acting on an affine scheme $X$ of finite type over $k$. 
 \begin{enumerate} 
 \item[(i)] The action of $G$ on $X$ is free if and only if  the quotient stack $[X/G]$ is represented by an algebraic space. This follows from the fact that an Artin stack is Deligne--Mumford if and only if it has an unramified diagonal, cf. Stacks Project, \S 100.21.
 \item[(ii)] If $G$ is connected reductive and the action of $G$ on $X$ is free then the canonical map  $[X/G]\ra X/\!\!/G$ is an isomorphism.  This is because, under our assumptions, $[X/G]$ is a separated algebraic space, and $X/\!\!/G$ is a categorical quotient in the category of separated algebraic spaces, cf. Theorem 7.2.1 of \cite{Alper}. 
 \end{enumerate}

\subsection{Proof of  part (ii)} By Lemma \ref{l:free}, the action of $G/Z$ on the representation variety $\bR$ is free. 
Since $\bR$ is affine and of finite type over $k$ and $G$ is connected reductive, the above discussions imply that the action is scheme-theoretically free and the canonical map $[\bX]=[\bR/(G/Z)]\ra  \bR/\!\!/(G/Z)=\bX$ is an isomorphism. This proves the first statement of (ii). The remaining statements follow immediately. \qed

    \section{Character varieties over finite fields} \label{s:PartOne} 
    In this section, we work over a finite field $k=\Fq$ and assume that the reductive group $G$ has connected centre and that $p=\mathrm{char}(k)$ is a very good prime for $G$. Let $\bX$ be a character variety with regular monodromy over $k$ (Definition \ref{d:main}). 
In what follows, we use the Frobenius Mass Formula \eqref{eq:Frob} and the character theory of finite reductive groups (\`{a} la Deligne--Lusztig) to obtain an expression for $|\bX(k)|$ involving certain sums of characters of $T(k)$. The evaluation of these character sums will be carried out in subsequent sections.

 \subsection{Recollections on characters of $G(k)$} Let $\Irr(G(k))$ denote the set of irreducible complex characters of $G(k)$.  We identify a representation with its character and denote them by the same letter.
 
 \subsubsection{} 
To every character $\theta \in T(k)^\vee$, one associates a  representation:  
\[
\cB(\theta)\colonequals \mathrm{Ind}_{B(k)}^{G(k)} \, \theta. 
\] 
Irreducible constituents of $\cB(\theta)$ are known as principal series representations. 

\begin{lem} \label{l:principalSeries}
If $\theta$ and $\theta'$ are $W$-conjugate then $\cB(\theta)$ and $\cB(\theta')$ are isomorphic. Otherwise, they have no isomorphic irreducible constituents. 
\end{lem}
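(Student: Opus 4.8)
The plan is to deduce the entire lemma from the single identity
\[
\langle \cB(\theta),\cB(\theta')\rangle_{G(k)}=\#\{\,w\in W:{}^{w}\theta=\theta'\,\},
\]
where $\langle\cdot,\cdot\rangle$ is the standard Hermitian inner product on complex class functions. Granting this identity, the lemma follows formally (see the third paragraph), so almost all of the work is the proof of the identity, which is a routine Frobenius--Mackey computation.

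To establish the identity I would first apply Frobenius reciprocity to get $\langle\cB(\theta),\cB(\theta')\rangle_{G(k)}=\langle\theta,\ \mathrm{Res}_{B(k)}\mathrm{Ind}_{B(k)}^{G(k)}\theta'\rangle_{B(k)}$, and then expand the restriction by Mackey's formula, decomposing over the double cosets $B(k)\backslash G(k)/B(k)$. By the Bruhat decomposition $G(k)=\coprod_{w\in W}B(k)\,\dot w\,B(k)$ (part of the split $BN$-pair structure of $G(k)$, with representatives $\dot w$ available in $N_G(T)(k)$ because $H^1(\Fq,T)=0$ for the split torus $T$) these cosets are indexed by $W$, so the pairing becomes $\sum_{w\in W}\langle\mathrm{Res}_{B(k)\cap{}^{\dot w}B(k)}\theta,\ {}^{\dot w}\theta'\rangle$. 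Now $B(k)\cap{}^{\dot w}B(k)=T(k)\ltimes (U(k)\cap{}^{\dot w}U(k))$, and both $\theta$ and ${}^{\dot w}\theta'$ are trivial on the unipotent factor: $\theta$ because it is inflated from $T(k)$, and ${}^{\dot w}\theta'$ because $\theta'$ kills $U(k)$ and hence ${}^{\dot w}\theta'$ kills ${}^{\dot w}U(k)\supseteq U(k)\cap{}^{\dot w}U(k)$. Thus both descend to characters of $T(k)$, namely $\theta$ and ${}^{w}\theta'$, and orthonormality of the characters of the abelian group $T(k)$ makes the $w$-th summand equal to $1$ if ${}^{w}\theta'=\theta$ and $0$ otherwise; re-indexing $w\mapsto w^{-1}$ yields the identity.

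With the identity in hand: if $\theta$ and $\theta'$ lie in distinct $W$-orbits the right-hand side is $0$, so $\cB(\theta)$ and $\cB(\theta')$ have no common irreducible constituent. If instead $\theta'={}^{v}\theta$ for some $v\in W$, then $\{w:{}^{w}\theta=\theta'\}=vW_\theta$ with $W_\theta=\{w:{}^{w}\theta=\theta\}$, hence
\[
\langle\cB(\theta),\cB(\theta')\rangle=|W_\theta|=\langle\cB(\theta),\cB(\theta)\rangle=\langle\cB(\theta'),\cB(\theta')\rangle.
\]
Writing $\cB(\theta)=\sum_i a_i\rho_i$ and $\cB(\theta')=\sum_i b_i\rho_i$ as sums of the irreducible characters $\rho_i$ of $G(k)$ with non-negative integer multiplicities, these three equalities say $\sum_i a_i^2=\sum_i a_ib_i=\sum_i b_i^2$, whence $\sum_i(a_i-b_i)^2=0$; therefore $a_i=b_i$ for all $i$ and $\cB(\theta)\cong\cB(\theta')$.

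I do not expect a genuine obstacle here: the identity is classical, and the lemma is in effect the instance of the Deligne--Lusztig disjointness theorem attached to the split maximal torus, where $\cB(\theta)=R_T^G(\theta)$ (see \cite{DeligneLusztig}, or the books of Carter and of Digne--Michel on finite groups of Lie type). The only mildly delicate points of the self-contained argument are the identification of $B(k)\backslash G(k)/B(k)$ with $W$ and the verification that both characters kill $U(k)\cap{}^{\dot w}U(k)$, both of which are standard. The argument is uniform in the characteristic, so the lemma requires no hypothesis on $p$.
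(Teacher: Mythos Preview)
Your argument is correct. The paper does not give a proof but simply cites \cite[Corollary~6.3]{DeligneLusztig}; that corollary is precisely the inner-product formula $\langle R_T^G(\theta),R_T^G(\theta')\rangle=\#\{w\in W:{}^{w}\theta=\theta'\}$ specialised to the split torus, for which $R_T^G(\theta)=\cB(\theta)$. You have supplied a self-contained proof of this formula via Frobenius reciprocity and Mackey, together with the Bruhat decomposition, and then deduced the two halves of the lemma from it. The only genuine content beyond the citation is your clean deduction that equal self- and cross-intertwining numbers force $\cB(\theta)\cong\cB(\theta')$ via $\sum(a_i-b_i)^2=0$, which is a nice touch. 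So your route is not different from the paper's in spirit---it unpacks the cited result rather than invoking it---but it has the advantage of being elementary and independent of the Deligne--Lusztig machinery.
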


\begin{proof} This is well-known, cf. \cite[Corollary 6.3]{DeligneLusztig}. 
\end{proof} 

Let $\langle .\, , .\rangle$ denote the standard invariant inner product on the class functions on $G(\Fq)$.
\begin{thm}[Deligne--Lusztig] \label{t:DL} Let $S\in T(k)$ be a strongly regular element (i.e., $C_G(S)=T$) and $\chi\in \Irr(G(k))$. Then 
\[
\chi(S)= \sum_{\theta \in T(k)^\vee} \langle \chi, \cB(\theta)\rangle\,  \theta(S). 
\]
In particular, $\chi(S)$ is non-zero only if $\chi$ is a principal series representation. 
\end{thm}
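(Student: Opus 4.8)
The plan is to derive the identity purely from the character theory of the finite group $G(k)$, using Frobenius reciprocity, Fourier inversion on the finite abelian group $T(k)$, and one geometric input about regular elements; the full Deligne--Lusztig machinery is not really needed in this special case, though the statement can also be read off from the Deligne--Lusztig character formula (see the end).

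First I would rewrite the right-hand side. Since $\cB(\theta)=\mathrm{Ind}_{B(k)}^{G(k)}\theta$, Frobenius reciprocity gives $\langle\chi,\cB(\theta)\rangle=\langle\mathrm{Res}_{B(k)}\chi,\theta\rangle_{B(k)}$. Writing $B(k)=T(k)\ltimes U(k)$ and using that $\theta$ is inflated from $T(k)$, this equals $\langle f,\theta\rangle_{T(k)}$, where $f$ is the function on $T(k)$ given by $f(t)=|U(k)|^{-1}\sum_{u\in U(k)}\chi(tu)$ (the Harish--Chandra restriction of $\chi$ to $T(k)$). Summing $\langle f,\theta\rangle_{T(k)}\,\theta(S)$ over all $\theta\in T(k)^\vee$ and applying Fourier inversion on the abelian group $T(k)$ collapses the sum to $f(S)$. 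Thus I would be reduced to proving $f(S)=\chi(S)$, equivalently $\sum_{u\in U(k)}\chi(Su)=|U(k)|\,\chi(S)$.

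The crux --- and the step I expect to be the main obstacle --- is the following geometric lemma: if $S\in T(k)$ is strongly regular, so $C_G(S)=T$ and in particular $\alpha(S)\ne 1$ for every root $\alpha$, then $Su$ is $U(k)$-conjugate to $S$ for every $u\in U(k)$; granting this, $\chi(Su)=\chi(S)$ for all $u$ and the proof is finished. To establish the lemma I would solve $v^{-1}(Su)v=S$ for $v\in U$: rearranging, the equation becomes $u=\psi_S(v)$, where $\psi_S\colon U\to U$ is the morphism $\psi_S(v)=(S^{-1}vS)\,v^{-1}$ (note $S$ normalises $U$). I would then filter $U$ by its lower central series --- each term being $S$-stable, with successive quotients abelian and isomorphic to a sum of root spaces $\mathfrak g_\alpha$ --- and observe that $\psi_S$ induces on each graded piece the map $\mathrm{Ad}(S^{-1})-\mathrm{id}$, which acts on $\mathfrak g_\alpha$ by the nonzero scalar $\alpha(S)^{-1}-1$ and is therefore invertible. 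A routine induction up the filtration then shows $\psi_S$ is bijective (in fact an isomorphism of varieties), so every $u\in U(k)$ lies in its image and the required conjugacy follows.

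Finally, the ``in particular'' clause is immediate: if $\chi$ is not a constituent of any $\cB(\theta)$, then every coefficient $\langle\chi,\cB(\theta)\rangle$ vanishes and the displayed formula forces $\chi(S)=0$. As an alternative to the elementary argument above, one could instead invoke the Deligne--Lusztig character formula at the semisimple element $S$: since $C_G(S)=T$ is a torus, it shows $R_{T'}^G(\theta)(S)=0$ unless $T'$ is $G(k)$-conjugate to the split torus $T$, while $R_T^G(\theta)=\cB(\theta)$; together with the expansion of $\chi$ in Deligne--Lusztig characters and the fact that on regular semisimple classes a class function agrees with its uniform projection, this recovers the stated identity.
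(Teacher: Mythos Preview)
Your argument is correct. The paper does not give a self-contained proof; it simply cites \cite[Corollary 7.6.2]{DeligneLusztig}, i.e.\ the general Deligne--Lusztig character formula together with the fact that any class function agrees with its uniform projection on regular semisimple elements. Your proof is genuinely different and more elementary: Frobenius reciprocity plus Fourier inversion on $T(k)$ reduces everything to the identity $\sum_{u\in U(k)}\chi(Su)=|U(k)|\,\chi(S)$, which you then settle with the standard filtration argument showing that the Lang-type map $v\mapsto (S^{-1}vS)v^{-1}$ is a bijection of $U$ (indeed of $U(k)$) whenever $\alpha(S)\neq 1$ for every root $\alpha$. This avoids virtual characters and $\ell$-adic cohomology altogether, at the cost of being specific to the split torus; the Deligne--Lusztig route the paper invokes treats all rational maximal tori uniformly and explains why only the split one contributes here. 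Your closing alternative sketch via the character formula and uniform projection is precisely what unpacking the paper's citation would yield.
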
 

\begin{proof} This is a corollary of Deligne and Lusztig's construction of representations of $G(k)$, see \cite[Corollary 7.6.2]{DeligneLusztig}. 
\end{proof}

\subsubsection{} \label{ss:chiTheta}
Our assumptions on $G$ and $p$ imply that regular unipotent elements of $G(k)$ form a single conjugacy class, cf. \cite[Theorem 4.14]{Springer}. Let $N\in B(k)$ be a regular unipotent element. 

\begin{thm}[Green--Lehrer--Lusztig]\label{t:GLL} 
\begin{enumerate} 
\item[(i)] For each $\theta\in T(k)^{\vee}$, the representation 
	$\cB(\theta)$ has a unique irreducible constituent that does not vanish on $N$. 
	\item[(ii)] Every irreducible character of $G$ takes value $0$ or $\pm 1$ on $N$. 
	\end{enumerate} 
\end{thm}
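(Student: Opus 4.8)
The statement is the theorem of Green, Lehrer and Lusztig, and the plan is to present it as a chain of reductions, carrying out the elementary part and citing the hard analytic input. The first step is the observation that the value of the \emph{whole} principal series on $N$ is constant. Since $N$ is unipotent, any $G(k)$-conjugate of $N$ lying in $B(k)$ in fact lies in $U(k)$, where $\theta$ is trivial; so the induced-character formula gives $\cB(\theta)(N)=|B(k)|^{-1}\,\#\{x\in G(k):x^{-1}Nx\in U(k)\}$, which is the number of $k$-rational Borel subgroups of $G$ containing $N$. A regular unipotent element lies in a unique Borel over $\bar k$, and that Borel is $F$-stable because it also contains $F(N)=N$; hence $\cB(\theta)(N)=1$ for every $\theta\in T(k)^\vee$. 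Thus both assertions, restricted to principal-series constituents, reduce to understanding how this $1$ is distributed among the irreducible constituents.

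For part (i) I would invoke the Hecke-algebra decomposition of $\cB(\theta)$: the endomorphism algebra $\mathrm{End}_{kG(k)}(\cB(\theta))$ is an Iwahori--Hecke algebra attached to $W_\theta=\mathrm{Stab}_W(\theta)$ with parameters powers of $q$, so its simple modules biject with $\Irr(W_\theta)$ and $\cB(\theta)=\bigoplus_{\rho\in\Irr(W_\theta)}(\dim\rho)\,\chi_{\theta,\rho}$ with the $\chi_{\theta,\rho}$ pairwise non-isomorphic irreducibles. One then computes $\chi_{\theta,\rho}(N)$ by expressing $\chi_{\theta,\rho}$ as an explicit $\mathbb{Z}$-combination of Deligne--Lusztig characters $R^G_{T_w}(\theta)$ and using that on unipotent classes $R^G_{T_w}(\theta)$ restricts to the Green function $Q^G_{T_w}$, whose value at a regular unipotent element is $\pm1$ for every $w$; via the Springer correspondence only the trivial local system on the regular unipotent orbit is relevant, and one obtains $\chi_{\theta,\rho}(N)=0$ unless $\rho$ is the trivial character $\mathbf 1_{W_\theta}$, with $\chi_{\theta,\mathbf 1_{W_\theta}}(N)\in\{\pm1\}$. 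Combined with $\cB(\theta)(N)=1$, this yields (i): the unique non-vanishing constituent is $\chi_{\theta,\mathbf 1_{W_\theta}}$. The standing hypothesis that $Z(G)$ is connected enters through the facts that the regular unipotent elements form a single $G(k)$-class and that the Gelfand--Graev representation $\mathrm{Ind}_{U(k)}^{G(k)}\psi$ is multiplicity free, which make these constituents behave uniformly.

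For part (ii) in full generality one drops the restriction to principal series: by Deligne--Lusztig theory every $\chi\in\Irr(G(k))$ is a constituent of some $\pm R^G_{T_w}(\theta)$, and Lusztig's classification writes $\chi$ as a rational combination of the various $R^G_{T_w}(\theta)$ with denominators prime to $p$; substituting $Q^G_{T_w}(N)\in\{\pm1\}$ shows $\chi(N)$ is a bounded $p'$-algebraic integer, and a closer analysis of the coefficients — in which, again, only the trivial local system on the regular unipotent orbit survives — pins it to $0$ or $\pm1$. This coefficient analysis is the genuine content of the Green--Lehrer--Lusztig theorem and is the step I expect to be the main obstacle if one wanted a self-contained proof; in the paper I would simply cite it (Green for $\mathrm{GL}_n$, Lehrer and Lusztig in general). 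Note that for the point counts in this paper only the principal-series case is actually used, since by Theorem \ref{t:DL} any $\chi$ contributing to the Frobenius mass formula vanishes at the strongly regular semisimple monodromy unless it is a principal-series constituent.
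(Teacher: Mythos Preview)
The paper's own ``proof'' is a bare citation to \cite{GreenLehrerLusztig} and \cite[Corollary~10.8]{DeligneLusztig}, with no argument given; your proposal therefore goes well beyond what the paper does, and your sketch of the underlying reasoning is essentially correct. In particular, your computation that $\cB(\theta)(N)=1$ via the induced-character formula and the uniqueness of the Borel containing a regular unipotent is exactly the ingredient the paper later uses (implicitly) in the proof of Proposition~\ref{p:chi_theta}(ii); and your closing remark that only the principal-series case is needed for the point count is spot on.

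Two small comments. First, a slip of notation: you write $\mathrm{End}_{kG(k)}(\cB(\theta))$, but the principal series are complex representations, so this should be $\mathrm{End}_{\bC G(k)}(\cB(\theta))$. Second, your claim that the $\chi_{\theta,\rho}$ can be written as explicit $\mathbb{Z}$-combinations of the $R^G_{T_w}(\theta)$ is not quite right in general --- the coefficients are rational, and indeed the constituents need not be uniform. What saves the argument is that the class function supported on the regular unipotent class is itself uniform (only the trivial local system lives on that orbit), so $\chi(N)$ depends only on the uniform projection of $\chi$; one then uses $Q^G_{T_w}(N)=1$ for every $w$ together with orthogonality in $W_\theta$ to see that only $\rho=\mathbf 1_{W_\theta}$ survives. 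This is precisely the ``coefficient analysis'' you flag as the cited hard step, so your overall plan is sound.
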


\begin{proof} See \cite{GreenLehrerLusztig} or \cite[Corollary 10.8]{DeligneLusztig}. 
\end{proof} 

We denote by $\chi_\theta$ the constituent of $\cB(\theta)$ which does not vanish on $N$. This is the unique semisimple constituent of $\cB(\theta)$, cf. \cite[\S 2.6.9]{geck2020character}.

\subsubsection{} \label{s:Steinberg} To study the representation $\chi_\theta$, we need some more information about the stabiliser of $\theta$. 
 For each $\theta \in T(k)^\vee$, let 
\[
W_\theta:=\{w\in W\, | \, w.\theta=\theta\}.
\]
In view of the identification in \S \ref{sss:dualTorus}, we can think of $\theta$ as an element of the dual torus $T^\vee(k)$; thus, we can define 
\[
\Phi_\theta^\vee:=\{\alpha \in \Phi^\vee \, | \, \alpha(\theta)=1\}.
\]

\begin{thm} 
\begin{enumerate} 
\item[(i)]  $\Phi_\theta^\vee$ is a closed subsystem of $\Phi^\vee$.
\item[(ii)] $W_\theta=W(\Phi_\theta^\vee)$; in particular, $W_\theta$ is a reflection subgroup. 
\end{enumerate} 
\label{t:Steinberg}
\end{thm}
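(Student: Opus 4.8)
The two assertions are Steinberg's description of the stabiliser of a semisimple element, transported to the Langlands dual group, and I would organise the proof along those lines. Throughout I identify $\theta\in T(k)^\vee$ with an element of $T^\vee(k)$ as in \S\ref{sss:dualTorus}, so that for $\alpha\in\Phi^\vee$ (a root of $G^\vee$) the value $\alpha(\theta)\in\overline{k}^{\,\times}$ is defined; evaluation at $\theta$ is a group homomorphism $\mathrm{ev}_\theta$ from the character lattice $X^\vee$ of $T^\vee$ to $\overline{k}^{\,\times}$, and by definition $\Phi_\theta^\vee=\Phi^\vee\cap\ker(\mathrm{ev}_\theta)$.

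For part (i): $\Phi_\theta^\vee$ is the intersection of the root system $\Phi^\vee$ with the sublattice $\ker(\mathrm{ev}_\theta)\subseteq X^\vee$. Such an intersection is automatically a closed subsystem: it is stable under $\alpha\mapsto-\alpha$ since $(-\alpha)(\theta)=\alpha(\theta)^{-1}$, and if $\alpha,\beta\in\Phi_\theta^\vee$ with $\alpha+\beta\in\Phi^\vee$, then $(\alpha+\beta)(\theta)=\alpha(\theta)\beta(\theta)=1$, so $\alpha+\beta\in\Phi_\theta^\vee$. That is all of (i).

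For part (ii), the inclusion $W(\Phi_\theta^\vee)\subseteq W_\theta$ is the easy direction: if $\alpha\in\Phi_\theta^\vee$ has coroot $\check\alpha\colon\bGm\to T^\vee$ (a cocharacter of $T^\vee$), the reflection $s_\alpha$ acts on $T^\vee$ by $t\mapsto t\cdot\check\alpha(\alpha(t))^{-1}$, so $\alpha(\theta)=1$ forces $s_\alpha(\theta)=\theta$; hence the generating reflections of $W(\Phi_\theta^\vee)$ all lie in $W_\theta$. For the reverse inclusion I would pass to $G^\vee$: regard $\theta$ as a semisimple element of $G^\vee(k)$ and put $C:=C_{G^\vee}(\theta)$. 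One checks $W_\theta=N_{C}(T^\vee)/T^\vee$ (a lift $n\in N_{G^\vee}(T^\vee)$ of $w\in W$ satisfies $n\theta n^{-1}=w(\theta)$, which equals $\theta$ iff $n\in C$, and $T^\vee\subseteq C$). By Steinberg's structure theory for centralisers of semisimple elements \cite{Steinberg}, $C^{\circ}$ is reductive with maximal torus $T^\vee$ and root system exactly $\Phi_\theta^\vee$, so $N_{C^{\circ}}(T^\vee)/T^\vee=W(\Phi_\theta^\vee)$. Finally, since $Z(G)$ is connected the derived group $[G^\vee,G^\vee]$ is simply connected, and Steinberg's connectedness theorem then gives $C=C^{\circ}$; combining, $W_\theta=N_C(T^\vee)/T^\vee=N_{C^{\circ}}(T^\vee)/T^\vee=W(\Phi_\theta^\vee)$, proving (ii).

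The whole weight of the argument sits in that last step: one must correctly verify the chain ``$Z(G)$ connected $\Rightarrow$ $[G^\vee,G^\vee]$ simply connected $\Rightarrow$ $C_{G^\vee}(\theta)$ connected'' and pin down the precise reference for the reductivity and root system of $C^{\circ}$ valid under the running hypothesis that $p$ is good for $G$; the rest is formal bookkeeping with $N(T^\vee)/T^\vee$. It is worth recording here, as the introduction already flags, that in general $W_\theta/W(\Phi_\theta^\vee)\cong\pi_0(C_{G^\vee}(\theta))$ — using $C=C^{\circ}N_C(T^\vee)$ — so the connectedness hypothesis is genuinely needed and cannot be circumvented.
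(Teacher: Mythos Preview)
Your proof is correct and follows precisely the approach underlying the paper's citations. The paper itself gives no argument beyond referring to \cite[\S 2.2]{Humphreys} for (i) and \cite[Theorem 5.13]{DeligneLusztig} for (ii), together with the footnoted remark that connected centre of $G$ forces $C_{G^\vee}(\theta)$ to be connected by Steinberg's theorem; you have simply unpacked those references, and your elementary verification of (i) via $\Phi_\theta^\vee=\Phi^\vee\cap\ker(\mathrm{ev}_\theta)$ is exactly the content of the Humphreys passage.
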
 

\begin{proof} For (i), see \cite[\S 2.2]{Humphreys} where it is explained that $\Phi_\theta^\vee$ is the root system of the centraliser $C_{G^\vee}(\theta)$. (Since we have assumed $G$ has connected centre, Steinberg's theorem implies that $C_{G^\vee}(\theta)$ is connected.) 
For (ii), see  \cite[Theorem 5.13]{DeligneLusztig}. 
\end{proof} 

\begin{rem} \label{r:subsystems}
As noted in Remark 5.14 of \cite{DeligneLusztig}, the closed subsystems $\Phi_\theta^\vee$ are of a special kind; namely, if $\Phi^\vee$ is irreducible, then $\Phi_\theta^\vee$ is generated by a subset of simple coroots together with the negative of the highest coroot. We will not use this fact. 
\end{rem}

\subsubsection{} Let $\ell$ and $\ell_\theta$ denote the length functions on $W$ and $W_\theta$, respectively. Let 
\[
P(t):= \sum_{w\in W} t^{\ell(w)} \qquad \textrm{and} \qquad P_\theta(t):=\sum_{w\in W_\theta} t^{\ell_{\theta}(w)}
\]
denote the corresponding Poincar\'e polynomials.

\begin{prop} \label{p:chi_theta} 
\begin{enumerate} 
\item[(i)] $\displaystyle \chi_\theta(1)=\frac{P(q)}{P_\theta(q)}$. Thus $\displaystyle \frac{|G(k)|}{\chi_\theta(1)}=P_\theta(q)\cdot \frac{|G(k)|}{P(q)}=P_\theta(q)\cdot |B(k)|$. 
\item[(ii)] $\chi_\theta(N)=1$. 
\item[(iii)] $\chi_\theta= \chi_{\theta'}$ if and only if $\theta$ and $\theta'$ are $W$-conjugate. 
\item[(iv)] If $S\in T(k)$ is strongly regular then 
$\displaystyle \chi_\theta(S)=\frac{1}{|W_\theta|} \sum_{w\in W} \theta(w.S)$.  
\end{enumerate} 
\end{prop}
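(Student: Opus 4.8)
The plan is to deduce all four parts from the structure theory of principal series recalled above, treating them in the order (iii), (ii), (iv), (i). Part (iii) is immediate from Lemma~\ref{l:principalSeries}: if $\theta'=w.\theta$ for some $w\in W$ then $\cB(\theta)\simeq\cB(\theta')$, and since $\chi_\theta$ and $\chi_{\theta'}$ are the unique semisimple constituents of these isomorphic representations they coincide; conversely, if $\chi_\theta=\chi_{\theta'}$ then this irreducible character is a common constituent of $\cB(\theta)$ and $\cB(\theta')$, so the second assertion of Lemma~\ref{l:principalSeries} forces $\theta$ and $\theta'$ to be $W$-conjugate.

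For (ii) I would evaluate $\cB(\theta)$ at $N$ using the induced-character formula $\cB(\theta)(N)=|B(k)|^{-1}\sum_{g}\theta(g^{-1}Ng)$, the sum taken over those $g\in G(k)$ with $g^{-1}Ng\in B(k)$. A regular unipotent element, being regular, lies in a unique Borel subgroup of $G$, which is necessarily $B$; hence $g^{-1}Ng\in B$ implies $gBg^{-1}=B$, so $g\in B(k)$ (as $B$ is self-normalising), and then $g^{-1}Ng$ is a unipotent element of $B(k)$, hence lies in $U(k)$ and is killed by $\theta$. Therefore $\cB(\theta)(N)=1$. Decomposing $\cB(\theta)$ into irreducibles, Theorem~\ref{t:GLL}(i) says $\chi_\theta$ is the only constituent not vanishing at $N$, so $1=\cB(\theta)(N)=m\cdot\chi_\theta(N)$ where $m\geq1$ is the multiplicity of $\chi_\theta$ in $\cB(\theta)$; since $\chi_\theta(N)$ is a non-zero element of $\{0,\pm1\}$ by Theorem~\ref{t:GLL}, we conclude $m=1$ and $\chi_\theta(N)=1$. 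In particular this shows $\chi_\theta$ occurs in $\cB(\theta)$ with multiplicity one, a fact used in (iv).

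For (iv) I would apply Theorem~\ref{t:DL} to $\chi=\chi_\theta$: $\chi_\theta(S)=\sum_{\eta\in T(k)^\vee}\langle\chi_\theta,\cB(\eta)\rangle\,\eta(S)$. The coefficient $\langle\chi_\theta,\cB(\eta)\rangle$ is non-zero exactly when $\chi_\theta$ is a constituent of $\cB(\eta)$; as $\chi_\theta$ is already a constituent of $\cB(\theta)$, Lemma~\ref{l:principalSeries} shows this happens precisely for $\eta\in W.\theta$, and for such $\eta$ part (iii) gives $\chi_\theta=\chi_\eta$, whence $\langle\chi_\theta,\cB(\eta)\rangle=\langle\chi_\eta,\cB(\eta)\rangle=1$ by the multiplicity-one statement from (ii). Thus $\chi_\theta(S)=\sum_{\eta\in W.\theta}\eta(S)$; using $|W.\theta|=|W|/|W_\theta|$ (Theorem~\ref{t:Steinberg}) and reindexing the resulting sum by $w\mapsto w^{-1}$, this equals $\frac{1}{|W_\theta|}\sum_{w\in W}(w.\theta)(S)=\frac{1}{|W_\theta|}\sum_{w\in W}\theta(w.S)$, as claimed.

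For (i) I would identify $\chi_\theta$ with the semisimple character of $G(k)$ in the rational Lusztig series attached to $\theta$, viewed via \S\ref{sss:dualTorus} as a semisimple element of $G^\vee(k)$ — this identification is precisely the characterisation of $\chi_\theta$ as the unique semisimple constituent of $\cB(\theta)$ — and invoke the Deligne--Lusztig degree formula $\chi_\theta(1)=|G^\vee(k)|_{p'}\,/\,|C_{G^\vee}(\theta)(k)|_{p'}$. By Theorem~\ref{t:Steinberg} and the connected-centre hypothesis, $C_{G^\vee}(\theta)$ is connected reductive with maximal split torus $T^\vee$, root system $\Phi_\theta^\vee$ and Weyl group $W_\theta$; counting $\Fq$-points through the Bruhat decomposition then gives $|G^\vee(k)|=q^{a}|T^\vee(k)|P(q)$ and $|C_{G^\vee}(\theta)(k)|=q^{b}|T^\vee(k)|P_\theta(q)$ with $q^a,q^b$ the respective $p$-parts, and taking the quotient of $p'$-parts yields $\chi_\theta(1)=P(q)/P_\theta(q)$. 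The remaining identity is then formal, using $|G(k)|/|B(k)|=|(G/B)(k)|=P(q)$. The one genuinely external input is this degree formula, together with the matching of $\chi_\theta$ to the Deligne--Lusztig semisimple character; alternatively, part (i) and the multiplicity-one statement can be extracted from Howlett--Lehrer's description of the Hecke algebra $\mathrm{End}_{G(k)}(\cB(\theta))$, which I regard as the main point requiring care.
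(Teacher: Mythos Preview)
Your proof is correct and follows essentially the same route as the paper's: parts (iii) and (iv) are identical, and for (ii) both arguments use the induced-character formula and Theorem~\ref{t:GLL}, though you go further by computing $\cB(\theta)(N)=1$ exactly and thereby extracting the multiplicity-one statement that the paper's proof of (iv) invokes without justification. For (i) the paper simply cites Kilmoyer and the standard references, whereas you supply the argument via the Deligne--Lusztig degree formula for semisimple characters and the Bruhat count of $|C_{G^\vee}(\theta)(k)|$; this is precisely what those references yield, so the approaches coincide.
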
 
\begin{proof} 
\begin{enumerate} 
\item[(i)] This is proved in \cite[\S 5.11]{Kilmoyer} for semisimple $G$ of adjoint type. The proof for reductive $G$ with connected centre follows from the combination of  \cite[Theorem 2.6.11 (b)]{geck2020character}, \cite[Proposition 3.5.1, Remark 11.2.2]{digne2020representations} and \cite[Theorem 1.6.7]{geck2020character}. 

\item[(ii)] The  induced character formula implies that the character of $\cB(\theta)$ at $N$ is positive. Further by Theorem \ref{t:GLL} $\chi_\theta$ is the \emph{unique} constituent of $\cB(\theta)$ whose character does not vanish at $N$, and its value at $N$ is $\pm 1.$ We conclude $\chi_\theta(N)=1$.

 \item[(iii)]  This follows from Lemma \ref{l:principalSeries} because if $\chi_\theta=\chi_{\theta'}$ then $\cB(\theta)$ and $\cB(\theta')$ have a common irreducible constituent. 
 
\item[(iv)] Theorem \ref{t:DL} implies 
\[
\chi_\theta(S) = \sum_{\theta'\in T(\Fq)^\vee} \langle \chi_\theta, \cB(\theta')\rangle \theta'(S).
\]
By the above discussion
\[
\langle \chi_\theta, \cB(\theta')\rangle = 
\begin{cases} 
1 & \textrm{$\theta'=w.\theta$ for some $w\in W$;}\\
0 & \textrm{otherwise}.
\end{cases} 
\]
Thus
\[
\chi_\theta(S) = \sum_{\theta'\in W.\theta} \theta'(S)= \frac{1}{|W_\theta|}
\sum_{w\in W} (w.\theta)(S) = \frac{1}{|W_\theta|} \sum_{w\in W} \theta(w.S). 
\]
\end{enumerate} 
\end{proof}

\subsection{Point count} 
In this subsection, we begin counting points on the character variety  with regular monodromy $\bX=\bX(C_1,...,C_n)$ (Definition \ref{d:main}).  Recall that $C_1,...,C_m$ are strongly regular semisimple and $C_{m+1}, ..., C_n$ are regular unipotent; thus,  
\[
|C_i(k)| = 
\begin{cases} 
|G(k)|\cdot |T(k)|^{-1} &  i={1,2,...,m} \\
|G(k)|\cdot |Z(k)|^{-1} q^{-r} & i={m+1,...,n}.
\end{cases} 
\]

\subsubsection{} By the Frobenius Mass Formula \eqref{eq:Frob}, we have 
\[
	|\bX(k)|=\mathfrak{Z}\sum_{\chi\in {\Irr(G(k))}} \left(\frac{|G(k)|}{\chi(1)}\right)^{2g+n-2} \prod_{i=1}^n {\chi(C_i(k))}, 
\]
where
\begin{equation} \label{eq:Z} 
\mathfrak{Z}\colonequals |Z(k)|\cdot \prod_{i=1}^n \frac{|C_i(k)|}{|G(k)|} = |Z(k)|\cdot  |T(k)|^{-m}\cdot  \Big(|Z(k)|q^{r}\Big)^{m-n}. 
\end{equation} 
Note that $\mathfrak{Z}$ is a rational function in $q$. 

\subsubsection{} \label{sss:RS}
Since at least one of the $C_i$'s is strongly regular, Theorem \ref{t:DL} implies that in the above sum, we only need to consider those $\chi\in {\Irr(G(k))}$ that are principal series representations appearing in $\cB(\theta)$ for some $\theta\in {T(k)}^\vee$. On the other hand, since at least one of the $C_i$'s is regular unipotent, Theorem \ref{t:GLL} implies that it suffices to consider the unique constituent $\chi=\chi_{\theta }$. Finally, in view of Proposition \ref{p:chi_theta}.(iii) two characters in the same Weyl orbit yield isomorphic  constituents. Therefore the Frobenius sum reduces to a sum over $T(k)^\vee/W$, which (in view of the fact that $\chi_\theta(N)=1$) equals:
	\[
		|\bX(k)|= 
		 \mathfrak{Z} 
		\sum_{\theta \in {T(k)^\vee}/W} 
		\left(\frac{|G(k)|}{\chi_\theta(1)}\right)^{2g+n-2} \prod_{i=1}^m \chi_\theta(S_i). 	
\]
For convenience, we re-write this as a sum over $T(k)^\vee$: 
\[
|\bX(k)|=\mathfrak{Z} 
		\sum_{\theta \in {T(k)^\vee}} \frac{|W_\theta|}{|W|}
		\left(\frac{|G(k)|}{\chi_\theta(1)}\right)^{2g+n-2} \prod_{i=1}^m \chi_\theta(S_i). 
\]

\subsubsection{} 
Next, Proposition \ref{p:chi_theta}.(i) implies that $\chi_\theta(1)$ depends only on the stabiliser $W_\theta$. By theorem \ref{t:Steinberg}, we have  $W_\theta=W(\Phi_\theta^\vee)$. Collecting the terms corresponding to the same closed subsystem $\Psi=\Phi_\theta^\vee\subseteq \Phi^\vee$ and writing $P_\Psi=P_\theta$ for the Poincar\'e polynomial of $W(\Phi_\theta^\vee)$ yields
\[
	|\bX(k)|= \mathfrak{Z} \sum_{\Psi \in \mathcal{S}(\Phi^\vee)} \frac{|W(\Psi)|}{|W|}\cdot \left(P_\Psi(q)\cdot |B(k)|\right)^{2g+n-2}  
	\left( \sum_{\substack{\theta\in T(\kf)^\vee\\ W_\theta=W(\Psi)}}\,\,  
	\prod_{i=1}^m  \chi_\theta(S_i) \right).
\]

\subsubsection{} 
Using Proposition \ref{p:chi_theta}.(iv), we can rewrite the inner sum as follows: 
\[
\sum_{\substack{\theta\in T(\kf)^\vee\\ W_\theta=W(\Psi)}}\,\,  
\prod_{i=1}^m  \chi_\theta(S_i) 
=\frac{1}{|W(\Psi)|^m} \sum_{\substack{\theta\in T(\kf)^\vee\\ W_\theta=W(\Psi)}}\,\,  
\prod_{i=1}^m  \left( \sum_{w\in W} \theta(w.S_i)\right).
\]

\subsubsection{}  Let $\underline{S}$ denote the tuple $(S_1,...,S_m)$. For each $\underline{w}=(w_1,...,w_m)\in W^m$, let $\underline{w}.\underline{S}$ denote the product $(w_1.S_1)...(w_m.S_m) \in T(k)$. 
Then we can rewrite the above sum as:  
\[
\sum_{\substack{\theta\in T(\kf)^\vee\\ W_\theta=W(\Psi)}}\,\,  
\prod_{i=1}^m  \left( \sum_{w\in W} \theta(w.S_i)\right) =
\sum_{\substack{\theta\in T(\kf)^\vee\\W_\theta=W(\Psi)}} \,\,\, \sum_{\underline{w}\in W^m} \theta(w_1.S_1)\cdots \theta(w_m.S_m)=
 \sum_{\underline{w}\in W^m} \,\,  \sum_{\substack{\theta\in T(\kf)^\vee\\W_\theta=W(\Psi)}} \theta(\underline{w}.\underline{S}).
\]

\subsubsection{} \label{sss:alpha} For ease of notation, let 
\[
\boxed{
\alpha_{\Psi,S}:=   \sum_{\substack{\theta\in T^\vee(\kf)\\W_\theta=W(\Psi)}} \theta(S).}
\]
Then we can summarise the results of this subsection in the following  proposition: 
\begin{prop} \label{p:count1}
We have 
\[
\displaystyle |\bX(k)| = \frac{\mathfrak{Z}.|B(k)|^{2g+n-2}}{|W|} \sum_{\Psi \in \mathcal{S}(\Phi^\vee)} |W(\Psi)|^{1-m}\cdot P_\Psi(q)^{2g+n-2}. \left(\sum_{\underline{w}\in W^m} \alpha_{\Psi, \underline{w}.\underline{S}} \right).
\] 
\end{prop}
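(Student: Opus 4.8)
The plan is to assemble the proposition by chaining together the identities from the preceding subsections, starting from the Frobenius Mass Formula \eqref{eq:Frob} and successively simplifying the character sum. First I would record the cardinalities of the conjugacy classes. Since $C_G(S_i)=T$ for a strongly regular $S_i$, and $C_G(N)=ZA$ with $A$ a unipotent group of dimension $r$ for a regular unipotent $N$, the identity $|C_i(k)|=|G(k)|/|C_G(\cdot)(k)|$ gives $|C_i(k)|=|G(k)|\,|T(k)|^{-1}$ for $i\le m$ and $|C_i(k)|=|G(k)|\,|Z(k)|^{-1}q^{-r}$ for $i>m$. Substituting these into \eqref{eq:Frob}, combining the $\chi(1)^{-1}$ factors with the $\chi(1)^{2-2g}$, and pulling the $q$- and $|Z(k)|$-dependent constants into the scalar $\mathfrak{Z}$ of \eqref{eq:Z}, leaves
\[
|\bX(k)|=\mathfrak{Z}\sum_{\chi\in\Irr(G(k))}\left(\frac{|G(k)|}{\chi(1)}\right)^{2g+n-2}\prod_{i=1}^n\chi(C_i(k)),
\]
where $\chi(C_i(k))$ denotes the value of $\chi$ on any element of the single $G(k)$-class $C_i(k)$ (a single class because $C_G(S_i)$ and $C_G(N)$ are connected under our hypotheses).

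Next I would carry out the Deligne--Lusztig reduction. Because at least one $C_i$ is strongly regular, Theorem \ref{t:DL} forces any contributing $\chi$ to be a constituent of some principal series $\cB(\theta)$; because at least one $C_i$ is regular unipotent, Theorem \ref{t:GLL} forces $\chi=\chi_\theta$, the unique semisimple constituent, on which $\chi_\theta(N)=1$ by Proposition \ref{p:chi_theta}.(ii). By Proposition \ref{p:chi_theta}.(iii) the assignment $\theta\mapsto\chi_\theta$ is constant on $W$-orbits and injective on the orbit set, so the Frobenius sum collapses to a sum over $T(k)^\vee/W$, which I rewrite as $\tfrac{1}{|W|}\sum_{\theta\in T(k)^\vee}|W_\theta|\,(\cdots)$. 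Applying Proposition \ref{p:chi_theta}.(i) turns $|G(k)|/\chi_\theta(1)$ into $P_\theta(q)\,|B(k)|$, and Theorem \ref{t:Steinberg} identifies $W_\theta$ with $W(\Phi_\theta^\vee)$ for the closed subsystem $\Phi_\theta^\vee\in\mathcal{S}(\Phi^\vee)$. Partitioning $T(k)^\vee$ according to the value $\Psi=\Phi_\theta^\vee$ pulls the now $\theta$-independent factors $P_\Psi(q)^{2g+n-2}$ and $|W(\Psi)|$ outside an inner sum over $\{\theta:W_\theta=W(\Psi)\}$ of $\prod_{i=1}^m\chi_\theta(S_i)$.

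Finally I would expand $\chi_\theta(S_i)=\tfrac{1}{|W_\theta|}\sum_{w\in W}\theta(w.S_i)$ from Proposition \ref{p:chi_theta}.(iv), multiply out the product over $i=1,\dots,m$ to produce a sum over $\underline{w}\in W^m$, use that $\theta$ is a group homomorphism to write $\prod_i\theta(w_i.S_i)=\theta(\underline{w}.\underline{S})$, and interchange the order of summation so that the innermost sum becomes exactly $\alpha_{\Psi,\underline{w}.\underline{S}}$ of \S\ref{sss:alpha}. Collecting scalars — $\mathfrak{Z}$, the $|B(k)|^{2g+n-2}$ from Proposition \ref{p:chi_theta}.(i), the global $|W|^{-1}$, the orbit-size weight $|W(\Psi)|$, and the $m$ factors $|W(\Psi)|^{-1}$ coming from the $m$ applications of Proposition \ref{p:chi_theta}.(iv), which combine to $|W(\Psi)|^{1-m}$ — yields the displayed formula. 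The argument is essentially bookkeeping on top of the cited structural theorems; the step requiring the most care is the double reduction, i.e.\ verifying that ``$\chi$ contributes a nonzero term to \eqref{eq:Frob}'' is equivalent to ``$\chi=\chi_\theta$ for some $\theta\in T(k)^\vee$'' and that each such $\chi_\theta$ is counted with the correct multiplicity $|W_\theta|/|W|$ after passing from $T(k)^\vee/W$ back to $T(k)^\vee$.
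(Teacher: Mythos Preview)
Your proposal is correct and follows essentially the same approach as the paper: starting from the Frobenius Mass Formula, reducing via Theorems \ref{t:DL} and \ref{t:GLL} to the characters $\chi_\theta$, rewriting the sum over $T(k)^\vee/W$ as $\tfrac{1}{|W|}\sum_{\theta}|W_\theta|(\cdots)$, stratifying by $\Psi=\Phi_\theta^\vee$ via Theorem \ref{t:Steinberg}, and then expanding $\prod_i\chi_\theta(S_i)$ with Proposition \ref{p:chi_theta}.(iv) before interchanging sums to recognise $\alpha_{\Psi,\underline{w}.\underline{S}}$. The bookkeeping of scalars, in particular the combination $|W(\Psi)|\cdot|W(\Psi)|^{-m}=|W(\Psi)|^{1-m}$, is exactly as in the paper.
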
 

To understand $|\bX(k)|$, it remains to analyse the character sums  $\alpha_{\Psi,S}$. To this end, we first need a detour on fixed points of  Weyl groups acting on tori.

\section{Invariants of Weyl groups on tori} The goal of this section is to review some facts we need regarding invariants of Weyl groups on the maximal torus. These facts will be used in the next section for computing the character sums $\alpha_{\Psi, S}$ and proving Theorem \ref{t:count}. 
We  work with the notation of \S \ref{ss:notation}, except we do not assume that $Z(G)$ is connected. 

 Let $T^W$ be the functor which associates to every $k$-algebra $R$, the set of fixed points 
\[
T(R)^W:=\{t\in T(R)\, | \, w.t=t, \,\, \forall w\in W\}.
\]
In this section, we discuss the number of points $T^W$ over a finite field. The results are presumably well-known though we could not find a sufficient treatment in the literature.

\subsection{The structure of $T^W$} We start by recalling some facts about $T^W$, cf.  \cite[\S 3.2]{Springer}. 
Observe that $T^W$ is represented by a closed (but not necessarily connected) algebraic subgroup of $T$. 
Indeed, $T^W=\mathrm{Spec}\, k[X_W]$, 
where 
$X_W$ is the group of coinvariants  of $W$ on  $X$; i.e., $X_W:=X/D_W$, where 
\[
 D_W:=\langle x-w.x \, | \, x\in X, \, w\in W\rangle.
\]
Note that $T^W$ is smooth if and only if $p$ is not a torsion prime for $X/D_W$. For instance, if $G=\mathrm{SL}_p$, then $T^W\simeq \mu_p$ which is not smooth in characteristic $p$.

\subsubsection{} Next, 
let $\Tor(X_W)$ denote the torsion subgroup of $X_W$ and $F(X_W)$ its maximal free quotient. Then we have a short exact sequence of finitely generated abelian groups 
\[
0\ra \Tor(X_W)\ra X_W\ra F(X_W)\ra 0.
\]
Applying the $\mathrm{Spec}$ functor, we obtain a short exact sequence of groups of multiplicative type
\[
1\leftarrow \pi_0(T^W) \leftarrow  T^W \leftarrow (T^W)^\circ  \leftarrow 1. 
\]

\subsubsection{Neutral component} We have: 
\begin{lem}\label{lem:lattices-containment}
 \[
2\langle \Phi \rangle \subseteq D_W \subseteq \langle \Phi \rangle.
\]
\end{lem}
  \begin{proof}
Let $s_{\alpha }\in W$ denote the reflection associated to a root $\alpha\in \Phi.$ To see $2\langle \Phi \rangle\subseteq D_W$ note that $2\alpha =\alpha-s_{\alpha }\alpha \in D_W.$ To see $D_W\subseteq \langle \Phi \rangle$ it suffices to show $x-w.x\in \langle \Phi \rangle$ for any $x\in X$ and $w\in W.$ For $x\in X$ and $w=s_{\beta_{\ell}}\cdots s_{\beta_1}\in W,$ set $x_0:=x$ and $x_{i+1}=s_{\beta_{i+1}}x_i$ for $0\leq i\leq \ell-1.$ Then 
$$x-w.x=x_0-x_{\ell}=\sum_{i=0}^{\ell-1}(x_i-x_{i+1})=\sum_{i=0}^{\ell-1}(x_i-s_{\beta _{i+1}}x_i)=\sum_{i=0}^{\ell-1}\langle x_i, \beta_{i+1}^{\vee}\rangle \beta_{i+1}\in  \langle \Phi \rangle. \eqno \qedhere$$ 
  \end{proof}

Thus, the abelian groups $X/D_W$ and  $X/\langle \Phi \rangle$ differ only in $2$-torsion; in particular,  the ranks of these groups are equal. Recall that the centre of $G$ is given by  $Z:=\mathrm{Spec}\, k[X/\langle \Phi \rangle]$. Thus, we conclude 
\[
(T^W)^\circ=Z^\circ.
\] 

\begin{rem} 
In fact, one can show that 
$T^W \simeq (\mathbb{Z}/2)^{\mathfrak{r}}\times Z$,
where $\mathfrak{r}$ is the number of direct factors of $G$ isomorphic to $\mathrm{SO_{2n+1}}$, for some $n\geq 1$, cf. \cite[Proposition 3.2]{JMO}.  We shall not use this fact. 
\end{rem}

 \subsubsection{The group of components}  
 For ease of notation, let $\fT:=\Tor(X_W)$ and $\pi_0:=\pi_0(T^W)=\mathrm{Spec}\, k[\fT]$. Then 
\[
\pi_0(\overline{k}) = \Hom(\fT, \overline{k}^\times). 
\]
If $p$ does not divide $|\fT|$, then $\pi_0$ is \'etale and
\[
\pi_0(k) = \pi_0(\overline{k})^{\mathrm{Gal}(\overline{k}/k)} = \Hom(\fT, \overline{k}^\times)^{\mathrm{Gal}(\overline{k}/k)}.
\]

\subsubsection{Group of components over finite fields} Now suppose $k=\Fq$ and $\gcd(q, |\fT|)=1$. Then, we have 
\[
\pi_0(\overline{k}) = \Hom(\fT, \overline{k}^\times) = \Hom(\fT, \mu_{p'}) = \Hom(\fT,\bCt) = \fT^\vee. 
\]
Here, the second equality uses the identification \eqref{eq:rootsOfUnity} while the third one follows from the fact that $\gcd(q,|\fT|)=1$. Taking the Galois fixed points and noting that the Galois group is generated by the Frobenius $\mathrm{Fr}$ which raises elements to power of $q$,  we obtain
\[
\pi_0(k) = \pi_0(\overline{k})^{\mathrm{Gal}(\overline{k}/k)}= \pi_0(\overline{k})^{\mathrm{Fr}}= \pi_0(\overline{k})^{q}=(\fT^\vee)^q. 
\]

\subsubsection{Polynomial property} 
 Let us show that $T^W$ is polynomial count.

\begin{lem} \label{l:T^WPoly}
Suppose $q\equiv 1 \mod |\fT|$. Then $T^W$ is polynomial count with counting polynomial 
\[
|\!|T^W|\!|(t)=|\Tor(X_W)| (t-1)^{\mathrm{rank}(X/\langle \Phi \rangle)}. 
\]
\end{lem}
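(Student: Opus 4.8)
The plan is to reduce the point count of $T^W$ over $\mathbb{F}_{q^N}$ to a count of elements in a finite abelian group, using the structure results of \S\ref{ss:notation} and the short exact sequences established just above. First I would use the decomposition coming from $1 \to (T^W)^\circ \to T^W \to \pi_0(T^W) \to 1$ together with the identification $(T^W)^\circ = Z^\circ$. Since $X/\langle \Phi \rangle$ is, by hypothesis (we are in the situation of Theorem \ref{t:count}, so $Z(G)$ is connected — wait, but here the statement is stated independently, so I should only assume $q \equiv 1 \bmod |\Tor(X_W)|$), the free quotient $F(X_W)$ has rank equal to $\mathrm{rank}(X/\langle \Phi \rangle) =: \rho$, and the torus $Z^\circ$ is therefore a split torus of rank $\rho$, contributing $(q^N - 1)^\rho$ points over $\mathbb{F}_{q^N}$.

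Next I would handle the component group. By the computation in \S\ref{ss:notation} (the ``Group of components over finite fields'' paragraph), for $k = \mathbb{F}_{q^N}$ with $\gcd(q^N, |\fT|) = 1$ — which holds since $q \equiv 1 \bmod |\fT|$ forces $\gcd(q, |\fT|) = 1$ — we have $\pi_0(T^W)(\mathbb{F}_{q^N}) = (\fT^\vee)^{q^N}$, the image of the $q^N$-power map on $\fT^\vee \cong \fT$. Now the key arithmetic input: since $q \equiv 1 \bmod |\Tor(X_W)| = |\fT|$, we get $q^N \equiv 1 \bmod |\fT|$ for all $N \geq 1$, so the $q^N$-power map is the identity on the finite group $\fT^\vee$, whence $\pi_0(T^W)(\mathbb{F}_{q^N}) = \fT^\vee$ has exactly $|\fT| = |\Tor(X_W)|$ elements, independently of $N$.

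Finally I would assemble the two pieces. The short exact sequence of group schemes $1 \to Z^\circ \to T^W \to \pi_0(T^W) \to 1$ gives, on $\mathbb{F}_{q^N}$-points, an exact sequence whose connecting map lands in $H^1(\mathbb{F}_{q^N}, Z^\circ)$; but $Z^\circ$ is a split torus, so this $H^1$ vanishes by Hilbert 90 (or Lang's theorem), and the sequence of $\mathbb{F}_{q^N}$-points is short exact. Therefore
\[
|T^W(\mathbb{F}_{q^N})| = |Z^\circ(\mathbb{F}_{q^N})| \cdot |\pi_0(T^W)(\mathbb{F}_{q^N})| = (q^N - 1)^{\rho} \cdot |\Tor(X_W)|,
\]
which is exactly $|\!|T^W|\!|(q^N)$ for $|\!|T^W|\!| = |\Tor(X_W)|\,(t-1)^{\mathrm{rank}(X/\langle \Phi \rangle)}$. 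This establishes polynomial count with the asserted counting polynomial.

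The step I expect to be the main obstacle is not any single computation but making sure the component-group identification $\pi_0(T^W)(\mathbb{F}_{q^N}) = (\fT^\vee)^{q^N}$ is applied with the correct hypotheses at each level $q^N$ — in particular that $\pi_0(T^W)$ remains étale over every $\mathbb{F}_{q^N}$ (which follows from $\gcd(q, |\fT|) = 1$) and that the Frobenius for $\mathbb{F}_{q^N}$ acts as the $q^N$-power map. Once one is careful that $q \equiv 1 \bmod |\fT|$ propagates to all powers, the congruence collapses the power map to the identity and the rest is the Hilbert 90 splitting, which is routine.
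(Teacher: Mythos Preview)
Your proof is correct and follows essentially the same approach as the paper: use the short exact sequence $1 \to (T^W)^\circ \to T^W \to \pi_0(T^W) \to 1$, the identification $(T^W)^\circ = Z^\circ$, and the component-group computation $\pi_0(\mathbb{F}_{q^N}) = (\fT^\vee)^{q^N} = \fT^\vee$ under the congruence hypothesis. Your version is more detailed in making explicit the Hilbert~90/Lang step needed to pass from the sequence of group schemes to exactness on $\mathbb{F}_{q^N}$-points, which the paper leaves implicit in the phrase ``the above discussions imply''.
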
 

\begin{proof} Indeed, if $q\equiv 1 \mod |\fT|$, then $(\fT^\vee)^q=\fT^\vee$. Thus, the above discussions imply
\[
|T^W(\Fq)|=|\fT|(q-1)^{\rank(X/\langle \Phi \rangle)}.
\]
The argument goes through if $q$ is replaced with $q^n$, establishing the lemma. 
\end{proof} 

\begin{rem} 
We note that if $q$ is co-prime to $|\fT|$, then after a finite base change it becomes congruent to $1$ modulo $|\fT|$. 
\end{rem} 

\subsection{Invariants for subgroups of $W$} 
\begin{lem} \label{l:D=Q} Suppose $X^\vee/\langle \Phi^\vee \rangle$ is free. Let $\Psi$ be a root subsystem of $\Phi$ with Weyl group $W(\Psi)$. Then $D_{W(\Psi)}=\langle \Psi\rangle$. Thus, $T^{W(\Psi)}=\mathrm{Spec}\, k[X/\langle \Psi\rangle]=Z(G(\Psi))$, where $G(\Psi)$ is the connected reductive subgroup of $G$ containing $T$ and with root system $\Psi$.  
\end{lem}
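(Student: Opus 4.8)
The plan is to prove the reverse inclusion $D_{W(\Psi)}\supseteq \langle\Psi\rangle$ (which holds unconditionally) and then show that the quotient $\langle\Psi\rangle/D_{W(\Psi)}$ is trivial under the freeness hypothesis on $X^\vee/\langle\Phi^\vee\rangle$. For the easy inclusion, recall that $D_{W(\Psi)}$ is generated by elements $x-w.x$ for $x\in X$, $w\in W(\Psi)$; taking $w=s_\alpha$ the reflection in a root $\alpha\in\Psi$ gives $x-s_\alpha.x=\langle x,\alpha^\vee\rangle\alpha$, and choosing $x$ with $\langle x,\alpha^\vee\rangle=1$ (possible since $\alpha^\vee$ is part of a basis of $X^\vee$ up to... — here one must be slightly careful, see below) shows $\alpha\in D_{W(\Psi)}$. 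Since the $\alpha\in\Psi$ generate $\langle\Psi\rangle$, we get $\langle\Psi\rangle\subseteq D_{W(\Psi)}$. So the content of the lemma is really the inclusion $D_{W(\Psi)}\subseteq\langle\Psi\rangle$, together with making the previous step work integrally.

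For the inclusion $D_{W(\Psi)}\subseteq\langle\Psi\rangle$: a generator $x-w.x$ with $w\in W(\Psi)$ can be written, using a reduced word $w=s_{\alpha_1}\cdots s_{\alpha_\ell}$ with $\alpha_i\in\Psi$, as a telescoping sum of terms each of the form $y-s_{\alpha_i}.y=\langle y,\alpha_i^\vee\rangle\alpha_i\in\langle\Psi\rangle$. Hence every generator of $D_{W(\Psi)}$ lies in $\langle\Psi\rangle$, giving the inclusion. Combined with the easy direction this yields $D_{W(\Psi)}=\langle\Psi\rangle$, and then $T^{W(\Psi)}=\mathrm{Spec}\,k[X_{W(\Psi)}]=\mathrm{Spec}\,k[X/\langle\Psi\rangle]$ by the description of $T^W$ recalled at the start of \S4.

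The main obstacle — and the only place the hypothesis that $X^\vee/\langle\Phi^\vee\rangle$ is free (equivalently $Z(G)$ connected) is used — is the integrality issue in the easy inclusion: a priori one only knows $\alpha\in D_{W(\Psi)}$ when some $x$ realizes $\langle x,\alpha^\vee\rangle=1$, i.e. when $\alpha^\vee$ is a primitive element of $X^\vee$. In general $2\langle\Phi\rangle\subseteq D_W\subseteq\langle\Phi\rangle$ and the $2$-torsion discrepancy can be genuine (the $\mathrm{SO}_{2n+1}$ example in the preceding remark). The freeness of $X^\vee/\langle\Phi^\vee\rangle$ forces each coroot $\alpha^\vee$ to be primitive in $X^\vee$: if $\alpha^\vee=k\beta$ for some $\beta\in X^\vee$ and $k>1$, then $\beta$ would give a nontrivial torsion class in $X^\vee/\langle\Phi^\vee\rangle$. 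I would spell this primitivity argument out carefully, since it is exactly what rescues the argument, and then the pairing $X\times X^\vee\to\Z$ being perfect supplies the required $x$ with $\langle x,\alpha^\vee\rangle=1$. Everything else is formal.
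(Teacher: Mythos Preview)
Your approach coincides with the paper's: the inclusion $D_{W(\Psi)}\subseteq\langle\Psi\rangle$ is handled via the reflection formula, and $\langle\Psi\rangle\subseteq D_{W(\Psi)}$ by producing, for each $\beta\in\Psi$, some $x\in X$ with $\langle x,\beta^\vee\rangle=1$ --- the paper writes $x=w^{-1}\mu_\alpha$ with $\mu_\alpha$ a fundamental weight (these lie in $X$ precisely because $X^\vee/\langle\Phi^\vee\rangle$ is free), which is just a repackaging of your statement that $\beta^\vee$ is primitive in $X^\vee$.

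Two small corrections. First, your parenthetical is wrong: $X^\vee/\langle\Phi^\vee\rangle$ free is equivalent to $Z(G^\vee)$ connected (equivalently, $[G,G]$ simply connected), \emph{not} $Z(G)$ connected. Second, your primitivity step has a gap: if $\alpha^\vee=k\beta$ with $k>1$ and $\beta\in\langle\Phi^\vee\rangle$, then $\beta$ gives the \emph{zero} class in $X^\vee/\langle\Phi^\vee\rangle$ and no contradiction arises, so you must also observe that coroots are already primitive in $\langle\Phi^\vee\rangle$ (clear since every coroot is $W$-conjugate to a simple coroot, and simple coroots form a $\Z$-basis). Finally, note that you have the labeling reversed throughout: the unconditional inclusion is $D_{W(\Psi)}\subseteq\langle\Psi\rangle$, and the one requiring the freeness hypothesis is $\langle\Psi\rangle\subseteq D_{W(\Psi)}$.
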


\begin{proof} This is stated without proof in \cite[p 1035]{Deriziotis85}. For completeness, we sketch a proof. 
The inclusion $D_{W(\Psi)}\subseteq \langle \Psi\rangle$ follows from the fact that if $x\in X$ and $\alpha \in \Psi$, then
\[
x-s_\alpha x = \langle x,\alpha^\vee \rangle \alpha \in \langle \Psi \rangle. 
\]

For the reverse inclusion, we need to show that every $\beta \in \Psi$ is in $D_{W(\Psi)}$. 
Let $\beta^\vee$ be the coroot corresponding to $\beta$ (under a fixed $W$-invariant bijection $\Phi\ra \Phi^\vee$, cf. \cite[\S 1.2]{geck2020character}).
Let $w\in W$ be such that $\alpha:=w.\beta$ is a simple root of $\Phi$ (cf. \cite[Theorem V.10.2.(c)]{Serre}). Let $\mu_\alpha\in X\otimes \mathbb{Q}$ be the corresponding fundamental weight. The assumption that $X^\vee/\langle \Phi^\vee \rangle$ is free implies that $\mu_\alpha \in X$. Now, we have 
\[
 \langle w^{-1}\mu_\alpha, \beta^\vee\rangle = \langle \mu_\alpha, w.\beta^\vee\rangle = \langle \mu_\alpha, \alpha^\vee\rangle  = 1. 
\]
Thus, 
\[
\beta = \langle w^{-1}\mu_\alpha, \beta^\vee\rangle \beta = w^{-1}\mu_\alpha - s_\beta (w^{-1}\mu_\alpha)\in D_{W(\Psi)}. 
\]
Thus, $D_{W(\Psi)}=\langle \Psi\rangle$ which implies that $X_W=X/\langle \Psi \rangle$, establishing the last statement of the lemma. 
\end{proof}

\subsection{Modulus of a reductive group} \label{s:modulus} 
   \begin{defe}  \label{d:modulus} 
The \emph{modulus of $G$}, denoted by $d(G)$, is defined to be the least common multiple of $|\mathrm{Tor}(X/\langle \Psi \rangle)|$, where $\Psi$ ranges over all closed subsystems of $\Phi$. 
 \end{defe} 
 
One checks that $d(\GL_n)=1$. On the other hand, suppose $G$ is (almost) simple and simply connected. Then it is shown in \cite{Deriziotis85}  that $d(G)$ equals the least common multiple of coefficients of the highest root and the order of $Z(G)$. Thus, we have:
  \[
  \begin{array}{|c|c|c|c|c|c|c|c|c|c|}\hline
\mathrm{Type} & A_n  & B_n  & C_n  & D_n  & E_6 & E_7 & E_8 & F_4 & G_2\\\hline 
d(G)  & n+1 & 2 & 2  & 4 & 6  &  12 & 60 & 12  & 6 \\\hline 
 \end{array}
\]
Note that for types $B_n$, $C_n$, $E_6$, $G_2$ (resp. $D_n$, $E_7$, $E_8$),  $d(G)$ is the product of bad primes (resp. twice the product of bad primes) of $G$. 

\begin{prop}  \label{p:q1}
Suppose $k=\Fq$, $q\equiv 1 \mod d(G)$, and  $X^\vee/\langle \Phi^\vee\rangle$ is free.  Then for every closed subsystem $\Psi\subseteq \Phi$, the variety $T^{W(\Psi)}$ is polynomial count with counting polynomial 
\[
|\!|T^{W(\Psi)}|\!| (t)=|\mathrm{Tor}(X/\langle \Psi \rangle)| (t-1)^{\mathrm{rank}(X/\langle \Psi \rangle )}.
\]
\end{prop}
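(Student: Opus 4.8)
The plan is to reduce Proposition \ref{p:q1} to Lemma \ref{l:T^WPoly} applied to the reflection subgroup $W(\Psi)$ in place of $W$, after identifying the relevant invariants. First I would invoke Lemma \ref{l:D=Q}: since $X^\vee/\langle \Phi^\vee \rangle$ is free, for every root subsystem $\Psi \subseteq \Phi$ we have $D_{W(\Psi)} = \langle \Psi \rangle$, hence the group of coinvariants is $X_{W(\Psi)} = X/\langle \Psi \rangle$ and $T^{W(\Psi)} = \mathrm{Spec}\, k[X/\langle \Psi \rangle]$. This is exactly the object to which the earlier structural analysis of $T^W$ applies, now with $W$ replaced by the subgroup $W(\Psi)$; note that all the general facts in \S\ref{ss:notation} and the subsection ``The structure of $T^W$'' were stated for a general reductive group, so they apply verbatim to the reflection subgroup, or alternatively one simply repeats the short argument.

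Next I would check the hypothesis of Lemma \ref{l:T^WPoly} is met for each such $\Psi$, namely that $q \equiv 1 \bmod |\Tor(X_{W(\Psi)})| = |\Tor(X/\langle \Psi \rangle)|$. This is where the modulus enters: by Definition \ref{d:modulus}, $d(G)$ is the least common multiple of $|\Tor(X/\langle \Psi \rangle)|$ over all closed subsystems $\Psi \subseteq \Phi$, so $|\Tor(X/\langle \Psi \rangle)|$ divides $d(G)$, and the assumption $q \equiv 1 \bmod d(G)$ forces $q \equiv 1 \bmod |\Tor(X/\langle \Psi \rangle)|$ for every such $\Psi$. (Since $\Psi$ is assumed closed here, this is immediate; I should be mildly careful that Lemma \ref{l:D=Q} was stated for a general root subsystem, which is fine, but the modulus only ranges over closed subsystems, which is all we need.)

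With both hypotheses verified, Lemma \ref{l:T^WPoly} applied to $W(\Psi)$ gives that $T^{W(\Psi)}$ is polynomial count with
\[
|\!|T^{W(\Psi)}|\!| = |\Tor(X_{W(\Psi)})| (t-1)^{\mathrm{rank}(X_{W(\Psi)} \otimes \mathbb{Q})}.
\]
Finally I would translate the exponent: by Lemma \ref{l:D=Q} again, $X_{W(\Psi)} = X/\langle \Psi \rangle$, so $\mathrm{rank}(X_{W(\Psi)}) = \mathrm{rank}(X/\langle \Psi \rangle)$ and $|\Tor(X_{W(\Psi)})| = |\Tor(X/\langle \Psi \rangle)|$, yielding the stated formula. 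I do not expect a genuine obstacle here; the proposition is essentially a bookkeeping corollary combining Lemma \ref{l:D=Q} (to identify the coinvariant lattice with $X/\langle \Psi \rangle$) with Lemma \ref{l:T^WPoly} (the polynomial-count statement) and Definition \ref{d:modulus} (to propagate the congruence on $q$). The only point requiring a sentence of care is making explicit that Lemma \ref{l:T^WPoly}, though phrased for the full Weyl group $W$ acting on $T$, applies equally to the reflection subgroup $W(\Psi)$ acting on $T$, since its proof used only the structure theory of coinvariants, which is insensitive to this replacement.
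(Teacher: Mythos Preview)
Your proposal is correct and follows exactly the same approach as the paper's proof: invoke Lemma \ref{l:D=Q} to identify $X_{W(\Psi)}=X/\langle \Psi\rangle$, use the definition of $d(G)$ to deduce $q\equiv 1\bmod |\Tor(X/\langle \Psi\rangle)|$, and then apply Lemma \ref{l:T^WPoly}. Your write-up is simply more explicit about the bookkeeping (in particular, noting that Lemma \ref{l:T^WPoly} applies with $W(\Psi)$ in place of $W$), but the logical content is identical.
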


\begin{proof} By Lemma \ref{l:D=Q}, the assumption on $X^\vee/\langle \Phi^\vee\rangle$ implies that  $X_{W(\Psi)}=X/\langle \Psi\rangle$. Next,  the fact that $q\equiv 1 \mod d(G)$ implies that $q\equiv 1 \mod |\Tor (X/\langle \Psi\rangle)|$. The result then follows from Lemma \ref{l:T^WPoly}. 
\end{proof}

\subsubsection{Example} Suppose $G=\GL_n$ and $\Psi$ is the root system of the Levi subgroup $L=\GL_{\lambda_1}\times \cdots \times \GL_{\lambda_r}$ where $\lambda_1\geq \lambda_2\geq \cdots \geq \lambda_r$ is a partition of $n$. Then 
\[
T^{W(\Psi)}=Z(L)\simeq \bGm^{r}.
\]
Thus, for all $q$, 
\[
|T^{W(\Psi)}(\Fq)|=(q-1)^r.
\]
For more examples, see \S \ref{s:Examples}.

   \section{Evaluation of character sums} \label{s:PartTwo}  
  The aim of this section is to compute the character sum $\alpha_{\Psi, S}$ and ultimately derive the point count for character varieties. The main subtlety here, which does not occur for $\GL_n$, is that character varieties associated with arbitrary reductive groups are not polynomial count on the nose, but become polynomial count after a finite base change. There are two reasons for this. First, the Weyl group invariants on the maximal torus may be disconnected. Second, for general groups, the inclusion $[G(k), G(k)] \subseteq [G, G](k)$ might be strict. In this section, we carefully address these issues and explain how an appropriate base change resolves them.   
   
    Recall our assumption that $G$ has connected centre and that $p:=\mathrm{char}(k)$ is a very good prime for $G$. This implies that for each $\theta\in T^\vee(k)$, the stabiliser  $W_\theta$ is a reflection subgroup of $W$; i.e., it is of the form $W(\Psi)$, where $\Psi$ is a closed subsystem of $\Phi^\vee$. 
    Let us recall the definition of the character sum $\alpha_{\Psi, S}$ introduced in \S \ref{sss:alpha}.
      \begin{defe} \label{d:alpha} 
For a closed subsystem $\Psi \subseteq \Phi^\vee$ and an element $S\in T(k)=T(\Fq)$,  let  
\[
\alpha_{\Psi,S}=\alpha_{\Psi, S}(q):=   \sum_{\substack{\theta\in T^\vee(\kf)\\W_\theta=W(\Psi)}} \theta(S). 
\]
\end{defe}

To evaluate this sum, we generalise the approach of Deriziotis \cite{Deriziotis85} who considered the case $S=1$. (The sum $\alpha_{\Psi,1}$ is closely related to the \emph{genus number} attached to $\Psi$, cf.  \cite{Deriziotis85}.) Note that in this section we do not need $S$ to be a strongly regular element. The following discussion is valid for
any $S\in T(k)$.

\subsection{An auxiliary sum} 
   To evaluate $\alpha_{\Psi,S}$, it is convenient to consider an auxiliary sum: 
   
   \begin{defe}\label{d:delta} For a closed subsystem $\Psi \subseteq \Phi^\vee$ and an element $S\in T(k)=T(\Fq)$, define 
 \[
\Delta_{\Psi,S}=\Delta_{\Psi,S}(q):=\displaystyle \sum_{\substack{\theta \in T^\vee(k)\\ W_\theta\supseteq  W(\Psi)}} \theta(S). 
\]
\end{defe} 
As we shall see $\alpha$ and $\Delta$ are related by a M\"{o}bius inversion. 

\subsubsection{} 
Observe that $W$ (and therefore $W(\Psi)$) acts on $T^\vee(k)$. Moreover, 
$W_\theta\supseteq  W(\Psi)$ if and only if $\theta$ is in the set of fixed points $(T^\vee(k))^{W(\Psi)}$. 
Thus, we can rewrite the sum as 
\begin{equation} \label{eq:fixed} 
\Delta_{\Psi,S}= \sum_{\theta \in (T^\vee(k))^{W(\Psi)}} \theta(S). 
\end{equation} 
To evaluate this, we need to recall a basic fact about Pontryagin duality. 

\subsubsection{} Given a homomorphism of finite abelian groups $f:A\ra B$, let $f^\vee: B^\vee\ra A^\vee$ denote the map $f^\vee(\tau) = \tau\circ f$. Note that if $f$ is surjective, then $f^\vee$ is injective.

\begin{lem} \label{l:Pontryagin} 
 Let  $f:A\twoheadrightarrow B$ be a surjective homomorphism of abelian groups. Then, for every $a\in A$,  we have 
\[
\sum_{\theta \in f^\vee(B^\vee)} \theta(a) =
\begin{cases} 
|B| & \text{if }f(a)=1\\
0 & \text{otherwise}. 
\end{cases} 
\]
\end{lem}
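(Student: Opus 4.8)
The statement is a standard fact from Pontryagin duality, and I would prove it by reducing to the well-known orthogonality relation for characters of a finite abelian group. The key observation is that $f^\vee$ identifies $B^\vee$ with a subgroup of $A^\vee$, namely the group of characters of $A$ that are trivial on $\ker(f)$. Concretely, a character $\psi \in A^\vee$ lies in $f^\vee(B^\vee)$ if and only if $\psi$ factors through $f$, i.e. $\psi|_{\ker(f)} = 1$; this uses that $f$ is surjective, so that $A/\ker(f) \cong B$ and every character of $B$ pulls back uniquely to such a $\psi$.

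\textbf{Key steps.} First I would fix $a \in A$ and rewrite the left-hand side as $\sum_{\tau \in B^\vee} (f^\vee \tau)(a) = \sum_{\tau \in B^\vee} \tau(f(a))$, using the definition $f^\vee(\tau) = \tau \circ f$. This already reduces the problem to a sum over $B^\vee$ evaluated at the single element $b := f(a) \in B$. Second, I would invoke the orthogonality relation for the finite abelian group $B$: for any $b \in B$,
\[
\sum_{\tau \in B^\vee} \tau(b) = \begin{cases} |B| & \text{if } b = 1,\\ 0 & \text{otherwise},\end{cases}
\]
which follows because if $b \neq 1$ there exists $\tau_0 \in B^\vee$ with $\tau_0(b) \neq 1$ (Pontryagin duality for finite abelian groups separates points), and then multiplication by $\tau_0$ permutes $B^\vee$, forcing the sum $\Sigma$ to satisfy $\tau_0(b)\Sigma = \Sigma$, hence $\Sigma = 0$; the case $b=1$ is immediate since every character sends $1$ to $1$. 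Third, I would translate back: $b = f(a) = 1$ is exactly the condition appearing in the statement, so the two cases match.

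\textbf{Remark on injectivity of $f^\vee$.} One subtlety worth noting is that the left-hand side is a sum over the \emph{image} $f^\vee(B^\vee)$, not over $B^\vee$ itself; this is harmless precisely because $f$ surjective implies $f^\vee$ injective (a fact already recorded in the excerpt just before the lemma), so the reindexing $\tau \mapsto f^\vee(\tau)$ is a bijection $B^\vee \xrightarrow{\sim} f^\vee(B^\vee)$ and no term is counted with the wrong multiplicity.

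\textbf{Main obstacle.} Honestly there is no serious obstacle here: the only thing requiring a moment's care is making sure the bijection between $B^\vee$ and $f^\vee(B^\vee)$ is used correctly, and that we are applying orthogonality on $B$ rather than on $A$. The proof is essentially two lines once the reindexing is set up, so I would keep it brief.
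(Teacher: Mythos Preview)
Your proposal is correct and follows essentially the same approach as the paper: reindex the sum via the injection $f^\vee$ to get $\sum_{\tau\in B^\vee}\tau(f(a))$, then apply orthogonality of characters on the finite abelian group $B$. The paper's proof is exactly this two-line computation, with the same justification for each equality.
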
 

\begin{proof} 
Indeed, 
\[
\displaystyle \sum_{\theta \in f^\vee(B^\vee)} \theta(a) =\sum_{\tau \in B^\vee} f^\vee(\tau)(a) =\sum_{\tau \in B^\vee} \tau(f(a) )=
\begin{cases} 
|B| & \text{if }f(a)=1\\
0 & \text{otherwise}. 
\end{cases}
\]
Here, the first equality follows from the injectivity of $f^\vee$, the second one is by definition of $f^\vee$,  and the third one is the elementary fact that sum of characters of $B$ evaluated an element $b\in B$ vanishes unless $b=1$, in which case the sum equals $|B|$. 
\end{proof}

\subsubsection{} We now return to evaluating $\Delta_{\Psi,S}$. By definition, we have an injective homomorphism of abelian groups 
\[
f_\Psi^\vee: (T^\vee(k))^{W(\Psi)} \hookrightarrow T^\vee(k). 
\]
By Pontryagin duality, we obtain a surjective homomorphism
\[
f_\Psi: T(k) \twoheadrightarrow \Big( (T^\vee(k))^{W(\Psi)}\Big)^\vee.
\]
Noe that this map depends on the ground field $\Fq$; thus, we sometimes denote it by $f_{\Psi,q}$. 
 Lemma \ref{l:Pontryagin} then implies: 

\begin{cor} \label{c:Delta} We have 
\[
\Delta_{\Psi,S}(q)=
\begin{cases} 
|(T^\vee(\Fq))^{W(\Psi)}| & \text{if } f_{\Psi,q}(S)=1\\
0 & \text{otherwise}.
\end{cases}
\]
\end{cor} 
\begin{proof} This follows from the previous lemma and the expression for $\Delta$ given in \eqref{eq:fixed}. 
\end{proof} 

To proceed further, we need to give a formula for $|(T^\vee(\Fq))^{W(\Psi)}|$ and elucidate the nature of the map $f_{\Psi, q}$. 

\subsection{Number of points of $T^\vee(k)^{W(\Psi)}$} \label{s:pointsTW}  
Let $G^\vee(\Psi)$ denote the connected reductive subgroup of $G^\vee$ with root system $\Psi\subseteq \Phi^\vee$. Since $G$ has connected centre, Lemma \ref{l:D=Q} implies 
\[
(T^\vee)^{W(\Psi)} = Z(G^\vee(\Psi)). 
\]
Next, let $\pi_0^\Psi$ denote the group of components of $Z(G^\vee(\Psi))(\overline{k})$. Since $p:=\mathrm{char}(k)$ is a very good prime for $G$ (and therefore also for $G^\vee)$, Theorem 1.1 of \cite{Herpel} implies that $Z(G^\vee(\Psi))$ is smooth; i.e., $p\nmid |\pi_0^\Psi|$. It follows that $q$ is co-prime to $ |\pi_0^\Psi|$. Thus, replacing $q$ by a finite power if necessary, we may assume that $q\equiv 1 \mod |\pi_0^\Psi|$.  In this case, Lemma \ref{l:T^WPoly} implies 
\[
|Z(G^\vee(\Psi))(\Fq)| = |\pi_0^\Psi| (q-1)^{r(\Psi)},
\]
where $r(\Psi)$ is the rank of the torus $Z(G^\vee(\Psi))$.

\subsection{On the map $f_\Psi$} In this subsection, we give a more direct description of the map $f_\Psi=f_{\Psi,q}$. 
Let $H_\Psi^\vee$ be the connected reductive subgroup of $G^\vee$ with maximal torus $T^\vee$ and root system  $\Psi$. Let $H=H_\Psi$ be the Langlands dual group over $k$. Thus, $H_\Psi$ is a connected split reductive group over $k$ with maximal split torus $T$ and root datum $(X, \Psi^\vee, X^\vee, \Psi)$. (Note that $H_\Psi$ is not necessarily a subgroup of $G$. If $H^\vee$ is a centraliser of a semisimple element, then $H_\Psi$ is an endoscopy group for $G$.)

\begin{prop}
	\label{p:f_Psi} 
	
	The map $f_{\Psi,q}: T(k)\ra \Big( (T(k)^\vee)^{W(\Psi)}\Big)^\vee$ coincides with the  quotient map 
\[
T(k)\ra T(k)/(T(k)\cap [H_\Psi(k), H_\Psi(k)]).
\]
\end{prop} 

\begin{proof} We have 
\begin{multline*}
\Big( (T(k)^\vee)^{W(\Psi)}\Big)^\vee = T(k)_{W(\Psi)} = T(k)/\langle (w.t)t^{-1} \, | \, w\in W, t\in T(k)\rangle\\ =
T(k)/(D_W\otimes k^\times) = 
T(k)/(\langle \Psi\rangle\otimes k^\times) = T(k)/(T(k)\cap [H(k), H(k)]). 
\end{multline*}
Here, the first equality follows from the fact that the dual of invariants equals co-invariants of the dual, the second is by definition, the third follows from the fact that $X^\vee\otimes k^\times = T(k)$, the fourth from Lemma \ref{l:D=Q} and the last from
 \cite{kamgarpour2012ramified}, Proposition 23. 
\end{proof}

\subsubsection{} The inclusion $[H_\Psi(k), H_\Psi(k)]\hookrightarrow [H_\Psi,H_\Psi](k)$ may be proper.  (For instance, the commutator subgroup of  $\PGL_n(k)$ is a proper subgroup equal to the kernel of the determinant map.) However, if $S\in [H_\Psi,H_\Psi](k)$, then there exists a finite extension $k'/k$ such that $S\in [H_\Psi(k'), H_\Psi(k')]$.\footnote{This is an indication that the commutator subgroup of an algebraic group is not well-behaved; see \cite{Masoud} for further details.}

\subsection{Polynomial property of $\Delta$ and $\alpha$} \label{s:BaseChange}
Let $g_\Psi: T\ra T/T\cap [H_\Psi,H_\Psi]$ denote the canonical quotient map . The advantage of $g_\Psi$ over $f_{\Psi, q}$ is that the former does not involve the $\Fq$-points of $H_\Psi$. 

\begin{prop} 
Fix $S\in T(k)$. There exists a positive integer $\ell$ such that for all closed subsystems $\Psi\subseteq \Phi^\vee$ and all positive integers $n$, the following holds: 
\[
\Delta_{\Psi,S}(q^{n\ell})=
\begin{cases} 
|\pi_0^\Psi| (q^{n\ell}-1)^{r(\Psi)} & \text{if  } g_\Psi(S)=1;\\
0 & \text{otherwise}.
\end{cases}
\]
\end{prop}

\begin{proof} There are finitely many closed subsystems $\Psi\subseteq \Phi^\vee$. Thus, we can choose $\ell$ so that the following properties hold for all such $\Psi$ and all positive integers $n$: 
\begin{enumerate} 
\item If  $S\in [H_\Psi, H_\Psi]$, then $S\in [H_\Psi(\mathbb{F}_{q^{n\ell}}), H_\Psi(\mathbb{F}_{q^{n\ell}})]$. 
\item We have $q^{n\ell} \equiv 1 \mod |\pi_0^\Psi|$. 
\end{enumerate} 
The first property implies that $g_\Psi(S)=1$ if and only if $f_{q^{n\ell}, \Psi}(S)=1$ for all $n$. The second condition guarantees that 
\[
|(T^\vee)^{W(\Psi)}( q^{n\ell})|= |\pi_0^\Psi| (q^{n\ell}-1)^{r(\Psi)},
\]
for all $n$.
These observations together with Corollary \ref{c:Delta} implies the result. 
\end{proof}

\subsection{Evaluating $\alpha_{\Psi,S}$ via M\"obius inversion}\label{ss:Mobius}   Let 
$\mu:\mathcal{S}(\Phi^\vee)\times \mathcal{S}(\Phi^\vee) \ra \mathbb{Z}$
denote the M\"obius function on the poset $\mathcal{S}(\Phi^\vee)$ of closed subsystems of $\Phi^\vee$. It follows from the definition that 
\[
\Delta_{\Psi, S} =  \sum_{\substack{\Psi' \in \cS(\Phi^\vee)\\ \Psi'\supseteq  \Psi}} \alpha_{\Psi', S}. 
\]
M\"obius inversion formula then implies 
\[
\alpha_{\Psi,S} = \displaystyle \sum_{\substack{\Psi' \in \cS(\Phi^\vee)\\ \Psi'\supseteq  \Psi}} \mu(\Psi,\Psi') \Delta_{\Psi',S}.
\]
 
 The previous proposition implies: 
 \begin{cor}\label{c:Base} There exists a polynomial $\gamma_{\Psi,S}\in \mathbb{Z}[t]$ and a positive integer $\ell$ such that 
\[
\alpha_{\Psi, S}(q^{n\ell}) = \gamma_{\Psi}(q^{n\ell})\qquad \forall n\geq 1. 
\]
\end{cor} 

\begin{proof} Indeed, the previous proposition implies that such a polynomial exists for $\Delta$. The corollary then follows from the fact that $\alpha$ can be written in terms of $\Delta$ using M\"{o}bius inversion on the poset of closed subsystems of $\Phi^\vee$, which is independent of the ground field. 
\end{proof} 
 
 \subsubsection{} In type $A_{n-1}$, the poset $\mathcal{S}(\Phi^\vee)$ is isomorphic to the poset of set-partitions of $\{1,2,...,n\}$, ordered by refinement. The M\"{o}bius function of the latter poset is given explicitly in, e.g., \cite[\S 3]{Stanley}. 
Namely, suppose  $\Phi_1\subseteq \Phi_2$ are two closed subsystems of $\Phi^\vee$ corresponding to partitions $\pi_1\leq \pi_2$. Let $t$ and $s$ be the number of blocks of $\pi_1 $ and $\pi_2$ respectively. Then 
  \[
  \mu(\Phi_1, \Phi_2)=(-1)^{t-s}\prod_{i=1}^s (t_i-1)!
  \]
    where $t_i$ is the number of blocks of $\pi_1$ whose union is the $i$-th block of $\pi_2$ for $i=1,2, \dots ,s$. For a description of the M\"{o}bius function in other root systems, see \cite{Deriziotis, FJ1, FJ2}.

\subsubsection{Consistency check} Suppose $S=1$ and $\Psi=\emptyset$. Then $\alpha_{\Psi, S}$ is the number of regular elements of $T^\vee(\Fq)$. Thus, if $G=\GL_n$, then  $\alpha_{\emptyset, 1}=(q-1)(q-2)\cdots (q-n)$. By the above discussions, we also have 
\[
\alpha_{\emptyset, 1} = \sum_{\Psi \in \mathcal{S}(A_{n-1})} \mu(\emptyset, \Psi) |T(\Fq)^{W(\Psi)}|. 
\]
Using the description of $\mu$ given in the previous paragraph, it follows that the right-hand side does indeed equal $(q-1)\cdots (q-n)$.

\subsection{Conclusion of the point count} \label{s:countPrecise}
Recall that Proposition \ref{p:count1} states  
\[
\displaystyle |\bX(\Fq)| = \frac{\mathfrak{Z}(q).|B(\Fq)|^{2g+n-2}}{|W|} \sum_{\Psi \in \cS(\Phi^\vee)} |W(\Psi)|^{1-m}\cdot P_\Psi(q)^{2g+n-2}. \sum_{\underline{w}\in W^m} 
\alpha_{\Psi, w.S}(q). 
\]
Here, $\mathfrak{Z}$ is a rational function in $q$. We also know that $|B(\Fq)|$ is a polynomial in $q$. For convenience, we denote this polynomial by $P_B(q)$.  
By the previous corollary, there exists a positive integer $\ell$ and polynomials $\gamma_{\Psi, w.S}$ such that 
\[
\alpha_{\Psi, w.S}(q^{n\ell}) = \gamma_{\Psi, w.S}(q^{n\ell}), \quad \qquad \forall \, n\geq 1,\,\, w\in W, \,\, \Psi\in \mathcal{S}(\Phi^\vee). 
\]
Now define a rational function $F_\bX(t)\in \mathbb{Z}(t)$ by  
\[
F_\bX(t):=\frac{\mathfrak{Z}(t).(P_B(t))^{2g+n-2}}{|W|} \sum_{\Psi \in \cS(\Phi^\vee)} |W(\Psi)|^{1-m}\cdot P_\Psi(t)^{2g+n-2}. \sum_{\underline{w}\in W^m} 
\gamma_{\Psi, w.S}(t). 
\]
By the above discussions, we have 
\[
|\bX(\mathbb{F}_{q^{n\ell}})| = F_{\bX}(q^{n\ell}), \quad \forall\,  n\geq 1. 
\]
Thus, the rational function $F_\bX$ takes on an integer for infinitely many values in $\mathbb{Z}$. This implies that $F_\bX$ is actually a polynomial; i.e., $F_\bX\in \mathbb{Z}[t]$, cf. \cite[Remark 2.7]{LetellierRV}. We thus conclude that $\bX\otimes_{\Fq} \mathbb{F}_{q^\ell}$ is polynomial count with counting polynomial $F_\bX$.

\subsubsection{} \label{s:Torus} 
Let us do a consistency check when $G=T$ is a torus. In this case, every element $g\in G$ is regular semisimple and the only unipotent element is $1$. Moreover, every conjugacy class is a singleton. Thus, the representation variety and the character variety coincide and we have 
 \[
\bR(C_1,...,C_n)=\bX(C_1,...,C_n) = T^{2g}.
\] 
On the other hand, since $\Phi^\vee=\emptyset$, the above formula gives 
\[
|\bX(k)| =\mathfrak{Z}|T(k)|^{2g+n-2} \Delta_{\emptyset, S} =  |T(k)|^{1-n}|T(k)|^{2g+n-2}|T(k)| = |T(k)|^{2g}. 
\]
Note that in this case no base change is necessary.

\section{Topological invariants of character varieties} \label{s:topology} 
The goal of this section is to prove Theorem \ref{t:topology}.

\subsection{Proof of Part (i)}\label{ss:Thm1pii}
We have already seen that $\bX$ is equidimensional. (This is part (ii) of Theorem \ref{t:Smooth}.) We have also seen that $\bX$ is, after a finite base change, polynomial count. It remains to show that the leading coefficient of the counting polynomial $|\!|\bX|\!|$ is $|\pi_1([G,G])|$.  

\subsubsection{} In view of discussions in \ref{s:countPrecise}, to understand the leading coefficient of $|\!|\bX|\!|$, we need to study the polynomials 
\[
Q_{\Psi,\Psi',S}:= P_\Psi(q)^{2g+n-2} \Delta_{\Psi', S}.
\]
 Here, $\Psi\subseteq \Psi'$ are closed subsystems of $\Phi^\vee$ and $S=S_1\cdots S_m$. 
By Corollary \ref{c:Delta}, we have
\[
\deg(Q_{\Psi,\Psi',S})\leq  (2g+n-2)|\Psi^+| + \dim((T^\vee)^{W(\Psi')}). 
\]
Thus, to prove our result, it is enough to show: 
  \begin{enumerate}
  \item   the leading coefficient of $Q_{\Phi^\vee,\Phi^\vee,S}$ is $|\mathrm{Tor}(X^\vee/Q^\vee)|$ and
 \item  $\deg(Q_{\Psi,\Psi',S})$ is maximal if and only if $\Psi=\Psi'=\Phi^\vee$.  
 \end{enumerate} 
 These are established in the two propositions below.

 \begin{prop} \label{p:extension} 
 \begin{enumerate} 
 \item[(a)] $\deg(Q_{\Phi^\vee,\Phi^\vee,S}) =  (2g+n-2)|\Phi^+| + \dim((T^\vee)^{W})$. 
 \item[(b)] The leading coefficient of $Q_{\Phi^\vee,\Phi^\vee,S}$ is $ |\mathrm{Tor}(X^\vee/Q^\vee)|$. 
 \end{enumerate} 
 \end{prop}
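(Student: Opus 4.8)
The plan is to prove the two parts of Proposition \ref{p:extension} by directly analysing the polynomial $Q_{\Phi^\vee,\Phi^\vee,S} = P_{\Phi^\vee}(q)^{2g+n-2}\,\Delta_{\Phi^\vee,S}$. For part (a), I would first observe that $P_{\Phi^\vee}(q) = P(q)$ is the full Poincar\'e polynomial of $W$, which has degree $|\Phi^+|$ (the number of positive roots, equivalently the length of the longest element $w_0$), and its leading coefficient is $1$. Next, I need to show $\Delta_{\Phi^\vee,S}$ is genuinely nonzero. By Corollary \ref{c:Delta}, $\Delta_{\Phi^\vee,S} \neq 0$ precisely when $f_{\Phi^\vee}(S) = 1$; by Proposition \ref{p:f_Psi} (with $\Psi = \Phi^\vee$, so $H = G$), the map $f_{\Phi^\vee}$ is the projection $T(k) \to T(k)/(T(k)\cap [G(k),G(k)])$, so $\Delta_{\Phi^\vee,S} \neq 0$ exactly when $S \in [G(k),G(k)]$. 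Here $S = S_1\cdots S_m$, and I would need to connect this to the standing hypothesis in Definition \ref{d:main} that $S_1\cdots S_n \in [G(k),G(k)]$: since $S_{m+1},\dots,S_n$ are unipotent and hence automatically lie in $[G(k),G(k)]$ (unipotent elements are products of commutators, or lie in the derived group as they are in the unipotent radical of a Borel), the product $S_1\cdots S_m$ also lies in $[G(k),G(k)]$, so indeed $f_{\Phi^\vee}(S)=1$ and $\Delta_{\Phi^\vee,S}\neq 0$. (The same argument applies with $q^n$ in place of $q$, so this holds for all the relevant finite fields.) Given this, Corollary \ref{c:Delta} tells us $\Delta_{\Phi^\vee,S} = |\mathrm{Tor}(X^\vee/\langle\Phi^\vee\rangle)|\,(q-1)^{\mathrm{rank}(X^\vee/\langle\Phi^\vee\rangle)}$.

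Here I need to be careful about which root datum is in play: in this section the character sums run over $\theta \in T^\vee(k)$ and $\Psi$ is a closed subsystem of $\Phi^\vee$, so Corollary \ref{c:Delta} should be applied with the roles of $(X,\Phi)$ and $(X^\vee,\Phi^\vee)$ exchanged relative to how it was stated for $T^W$. Thus $\Delta_{\Phi^\vee,S}$ has degree $\mathrm{rank}(X^\vee/\langle\Phi^\vee\rangle)$, which is precisely $\dim((T^\vee)^W)$ since $(T^\vee)^W$ has neutral component $Z(G^\vee)^\circ$ whose dimension is $\mathrm{rank}(X^\vee/\langle\Phi^\vee\rangle)$. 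Combining, $\deg(Q_{\Phi^\vee,\Phi^\vee,S}) = (2g+n-2)|\Phi^+| + \dim((T^\vee)^W)$, establishing (a). For part (b), the leading coefficient of $Q_{\Phi^\vee,\Phi^\vee,S}$ is the product of the leading coefficients of $P(q)^{2g+n-2}$ (which is $1$) and of $\Delta_{\Phi^\vee,S}$ (which is $|\mathrm{Tor}(X^\vee/\langle\Phi^\vee\rangle)| = |\mathrm{Tor}(X^\vee/Q^\vee)|$), so the leading coefficient is $|\mathrm{Tor}(X^\vee/Q^\vee)|$. I should also note $|\mathrm{Tor}(X^\vee/\langle\Phi^\vee\rangle)| = |\pi_0(Z(G^\vee))|$, which identifies the leading coefficient with the number of components of the centre of the dual group, matching the statement of Theorem \ref{t:topology}.(i).

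The main obstacle I anticipate is purely bookkeeping rather than conceptual: making sure the dualities line up correctly — that Corollary \ref{c:Delta}, stated for $T^W$ with root datum $(X,\Phi)$, is applied with $(X^\vee,\Phi^\vee)$ here, and that the hypothesis ``$q \equiv 1 \bmod d(G^\vee)$'' in Theorem \ref{t:count} is exactly what makes the $(\fT^\vee)^q = \fT^\vee$ step (and hence the clean formula for $\Delta$) go through for the dual torus. One subtle point worth spelling out is the nonvanishing of $\Delta_{\Phi^\vee,S}$: without it, $Q_{\Phi^\vee,\Phi^\vee,S}$ could be the zero polynomial and the degree claim would be vacuous. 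This is where the hypothesis from Definition \ref{d:main} that $S_1\cdots S_n \in [G(k),G(k)]$ is essential, together with the remark that regular unipotent elements lie in the derived group. I would state this as a short lemma or inline observation before invoking Corollary \ref{c:Delta}. The remaining proposition (that $\deg(Q_{\Psi,\Psi',S})$ is strictly smaller unless $\Psi = \Psi' = \Phi^\vee$) is deferred to the next statement and is not part of this proof.
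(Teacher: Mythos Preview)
Your proposal is correct and follows essentially the same approach as the paper: both establish (a) by showing $\Delta_{\Phi^\vee,S}\neq 0$ via Proposition~\ref{p:f_Psi} (equivalently, that $W$-invariant characters of $T(k)$ extend to $G(k)$ and hence vanish on $S\in [G(k),G(k)]$), then read off degree and leading coefficient from the explicit formula for $|(T^\vee(k))^W|$ (Corollary~\ref{c:Delta}/Proposition~\ref{p:q1} applied on the dual side). You are in fact slightly more careful than the paper in one spot: you explicitly reduce the condition $S=S_1\cdots S_m\in [G(k),G(k)]$ to the standing hypothesis $S_1\cdots S_n\in [G(k),G(k)]$ by noting the unipotent factors $S_{m+1},\dots,S_n$ already lie in the derived group, whereas the paper simply cites \S\ref{s:emptiness}.
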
  
 
 \begin{proof} For (a) it is enough to show that 
\[
 \Delta_{\Phi^\vee,S}=\displaystyle \sum_{\substack{\theta \in T(k)^\vee\\ W_\theta=W}} \theta(S)\neq  0.
 \]
  Since $X/\langle \Phi \rangle$ is free, we can apply \cite[Theorem 3.(iii)]{kamgarpour2012ramified} (or Proposition \ref{p:f_Psi}) to conclude that every $W$-invariant character $\theta\in T(k)^\vee$ extends to a character of $G(k)$. Thus 
\[ 
\theta\Big(T(k)\cap [G(k), G(k)]\Big)=1.
\]
On the other hand, we have assumed (see \S \ref{s:emptiness})  that $S\in [G(k), G(k)]$. It follows that $\theta(S)=1$ for all $W$-invariant $\theta$'s. Thus, $\Delta_{\Phi^\vee,S}=|(T^\vee(k))^W|$, establishing (a).

For (b), note that the leading coefficient equals $|\pi_0((T^\vee)^W)|$. Proposition \ref{p:q1} (applied to $G^\vee$ instead of $G$) implies that $|\pi_0((T^\vee)^W)| = |\mathrm{Tor}(X^\vee/Q^\vee)|$. 
 \end{proof}

 \begin{prop} $\deg (Q_{\Psi,\Psi',S})$ is maximal if and only if $\Psi=\Psi'=\Phi^\vee$. 
 \end{prop}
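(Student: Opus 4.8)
The plan is to show that $\deg(Q_{\Psi,\Psi',S}) < \deg(Q_{\Phi^\vee,\Phi^\vee,S})$ whenever $(\Psi,\Psi') \neq (\Phi^\vee, \Phi^\vee)$, using the degree bound from Corollary \ref{c:Delta} together with Proposition \ref{p:extension}(a). Recall $\deg(Q_{\Psi,\Psi',S}) \le (2g+n-2)|\Psi^+| + \dim((T^\vee)^{W(\Psi')})$ and $\deg(Q_{\Phi^\vee,\Phi^\vee,S}) = (2g+n-2)|\Phi^+| + \dim((T^\vee)^W)$. So it suffices to establish the inequality
\[
(2g+n-2)|\Psi^+| + \dim((T^\vee)^{W(\Psi')}) < (2g+n-2)|\Phi^+| + \dim((T^\vee)^W)
\]
for all pairs $\Psi \subseteq \Psi' \subseteq \Phi^\vee$ with $\Psi \neq \Phi^\vee$ (the case $\Psi = \Phi^\vee$ forces $\Psi' = \Phi^\vee$, which is excluded).

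First I would record the two monotonicity facts that drive the argument. Since $\Psi \subseteq \Psi'$ we have $|\Psi^+| \le |\Psi'^+| \le |\Phi^+|$, and since $W(\Psi') \subseteq W$ we have $(T^\vee)^{W} \subseteq (T^\vee)^{W(\Psi')}$, hence $\dim((T^\vee)^{W(\Psi')}) \ge \dim((T^\vee)^W)$; by \S\ref{sss:dualTorus}-style reasoning (or Lemma \ref{l:D=Q} applied to $G^\vee$), in fact $\dim((T^\vee)^{W(\Psi')}) = \operatorname{rank}(X^\vee/\langle \Psi' \rangle) = \dim(T^\vee) - \operatorname{rank}\langle\Psi'\rangle$. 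The key quantitative input is then: for any proper closed subsystem $\Psi \subsetneq \Phi^\vee$ (taking components into account), the drop $(2g+n-2)(|\Phi^+| - |\Psi^+|)$ strictly dominates the gain $\dim((T^\vee)^{W(\Psi)}) - \dim((T^\vee)^W) = \operatorname{rank}\langle\Phi^\vee\rangle - \operatorname{rank}\langle\Psi\rangle$. Reducing to the irreducible case, one checks that for an irreducible root system of rank $\ell$, removing even a single coroot from a closed subsystem drops the positive-root count by at least $1$ while dropping the rank by at most $1$; combined with $2g+n-2 \ge 1$ (which holds since $\bR$ is non-empty and $G$ is non-commutative, so $n \ge 1$ and either $g \ge 1$ or $n \ge 2$ — and actually one needs $2g+n-2 \ge 1$, which I would verify holds in all cases under consideration, e.g. $g = 0$ forces $n \ge 3$ here) this gives the strict inequality. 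To make the $|\Phi^+| - |\Psi^+| \ge \operatorname{rank}\langle\Phi^\vee\rangle - \operatorname{rank}\langle\Psi\rangle$ comparison clean, I would use that a rank-$j$ root system has at least $j$ positive roots, applied to the "difference", or argue directly that passing to a proper closed subsystem of full rank still decreases $|\Psi^+|$ strictly.

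The main obstacle I anticipate is the boundary bookkeeping: handling the case $2g+n-2 = 1$ (forcing equality-vs-strict to be delicate) and making sure the comparison $(2g+n-2)(|\Phi^+|-|\Psi^+|) > \operatorname{rank}\langle\Phi^\vee\rangle - \operatorname{rank}\langle\Psi\rangle$ is genuinely strict rather than merely $\ge$ when $\Psi'$ has full rank but $\Psi$ does not, or when $\Psi = \Psi'$ is a proper full-rank closed subsystem (so the rank gain is $0$ but we still need $|\Psi^+| < |\Phi^+|$, which holds because a proper full-rank closed subsystem of an irreducible system is strictly smaller — e.g. $A_1 \times A_1 \subsetneq A_2$ is impossible by rank, but $A_1 \times A_1 \subsetneq B_2$ does drop the positive-root count from $4$ to $2$). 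I would dispatch this by splitting into the cases $\Psi$ has full rank (then $\operatorname{rank}\langle\Psi\rangle = \operatorname{rank}\langle\Phi^\vee\rangle$ so the right side of the needed inequality, after using $(T^\vee)^{W(\Psi')}$, reduces to $\dim((T^\vee)^{W(\Psi')}) \le \dim((T^\vee)^W) + (2g+n-2)(|\Phi^+|-|\Psi^+|)$ with the difference on the left at most $\operatorname{rank}\langle\Phi^\vee\rangle - \operatorname{rank}\langle\Psi'\rangle \le \operatorname{rank}\langle\Phi^\vee\rangle - \operatorname{rank}\langle\Psi\rangle = 0$, while $|\Phi^+| - |\Psi^+| \ge 1$) versus $\Psi$ not of full rank (then I bound crudely: $|\Phi^+| - |\Psi^+| \ge |\Phi^+|-|\Psi'^+| + (\operatorname{rank}\langle\Psi'\rangle - \operatorname{rank}\langle\Psi\rangle)$ and use that each unit of rank-drop in an irreducible system costs at least one positive root). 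Finally I would conclude that the unique pair achieving the maximal degree is $(\Phi^\vee,\Phi^\vee)$, which is exactly what is needed to pin down the leading coefficient of $|\!|\bX|\!|$ via Proposition \ref{p:extension}(b).
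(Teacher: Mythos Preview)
Your setup is correct, but there is a genuine gap: you work under the assumption $2g+n-2\ge 1$, whereas the proposition is being proved under the hypothesis of Theorem~\ref{t:topology}(i), namely $g>0$ or $n>3$, which gives $2g+n-2\ge 2$. This stronger bound is essential. With only $2g+n-2=1$ your argument actually fails: take $\Phi^\vee$ of type $A_1$ and $\Psi=\Psi'=\emptyset$. Then the degree upper bound for $Q_{\emptyset,\emptyset,S}$ is $0+\dim T^\vee$, while $\deg Q_{\Phi^\vee,\Phi^\vee,S}=1+(\dim T^\vee-1)=\dim T^\vee$, so the two coincide. Your own worry that ``the case $2g+n-2=1$ forces equality-vs-strict to be delicate'' is not merely delicate bookkeeping---it is a genuine obstruction, and your sketched resolutions in the ``not of full rank'' case do not close it. The claim ``each unit of rank-drop in an irreducible system costs at least one positive root'' yields only $|\Phi^+|-|\Psi^+|\ge \mathrm{rank}\langle\Phi^\vee\rangle-\mathrm{rank}\langle\Psi\rangle$, which combined with $2g+n-2\ge 1$ gives $\ge$, not $>$.

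The paper's proof proceeds exactly from the inequality you set up but uses $2g+n-2\ge 2$ to reduce to showing $|\Phi|-|\Psi|>\dim(T^\vee)^{W(\Psi)}-\dim(T^\vee)^{W}$, then breaks $\Phi^\vee$ into irreducible components and invokes a combinatorial lemma: for an irreducible root system $\Phi$ of rank $r$ and any \emph{proper} subsystem $\Psi$, one has $|\Phi|-|\Psi|\ge 2r$. This is strictly stronger than your ``one positive root per rank unit'' (it says \emph{two} roots per rank unit, even when no rank is lost), and is what makes the strict inequality go through. Your full-rank case is fine; the missing ingredient is this sharper root-counting lemma together with the correct hypothesis $2g+n-2\ge 2$.
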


\begin{proof} We need to show that if $\Psi$ is not equal to $\Phi^\vee$, then 
	\[
	(2g+n-2)|\Phi^+| + \dim ((T^\vee)^W) > (2g+n-2)|\Psi^+| + \dim ((T^\vee)^{W(\Psi')}). 
	\]
This is equivalent to 
	\[
(2g+n-2)(|\Phi^+|-|\Psi^+|)  > \dim ( (T^\vee)^{W(\Psi')}) - \dim ((T^\vee)^W).
\]
Note that by assumption, 
\[
2g+n-2\geq 2,\qquad |\Phi^+|-|\Psi^+|>0,\qquad  \textrm{and}\qquad  (T^\vee)^{W(\Psi)}\supseteq (T^\vee)^{W(\Psi')}.
\]
 Thus it is enough to show that 
	\[
	2(|\Phi^+|-|\Psi^+|)=|\Phi|-|\Psi| > \dim ((T^\vee)^{W(\Psi)}) - \dim ((T^\vee)^W). 
	\]
We now reduce this to an inequality about irreducible root systems. Let 
\[
\Phi=\Phi_1\sqcup \cdots \sqcup \Phi_t, 
\]
 where $\Phi_i$ are irreducible root systems. Let 
 \[
  \Psi_i=\Phi_i\cap \Psi,\qquad 1\leq i\leq t.
  \] 
  Since $\Psi$ is a proper subsystem of $\Phi$, we may assume without the loss of generality that there exists $s\in \{1,2,...,t-1\}$ such that we have 
   \[
   \Psi_j=\Phi_j, \quad 1 \leq j\leq s, \qquad \textrm{and},\qquad  \Psi_i\subsetneq \Phi_i,\quad s+1\leq i\leq t.
   \] 
   We thus obtain: 
\[
|\Phi|-|\Psi|=\sum_{i=s+1}^t (|\Phi_i |- |\Psi_i|)
\]
and 
	\[
	\dim ((T^\vee)^{W(\Psi)})-\dim ((T^\vee)^W)=\sum_{i=s+1}^{t}(\rank(\Phi_i)-\rank(\Psi_i))\leq \sum_{i=s+1}^{t} \rank(\Phi_i).
	\]
Therefore, to prove our result, it suffices to show that for each irreducible root system $\Phi_i$ and proper subsystem $\Psi_i\subsetneq \Phi_i$, we have $ \rank(\Phi_i) < |\Phi_i|-|\Psi_i|$. This follows from the next lemma. 
\end{proof}

\begin{lem}\label{l:rootslemma} Let $\Phi$ be an irreducible root system of rank $r$ and $\Psi$  a  proper subsystem. Then 
\[
|\Phi|-|\Psi|\geq 2r.
\] 
\end{lem}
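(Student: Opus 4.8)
The plan is to reduce the statement to the case of \emph{maximal} proper closed subsystems, where the Borel--de Siebenthal classification turns the inequality into a bounded computation. First I would observe that enlarging $\Psi$ can only decrease $|\Phi|-|\Psi|$, so it suffices to prove the bound when $\Psi$ is maximal among proper closed subsystems of $\Phi$. Such a $\Psi$ necessarily has rank $r$ or $r-1$: if $\mathrm{rank}(\Psi)=s\le r-2$, then, writing $V$ for the $\mathbb{R}$-span of $\Psi$ and choosing any root $\gamma\in\Phi\setminus V$ (which exists since $\Phi$ spans the ambient space), the set $\Phi\cap(V\oplus\mathbb{R}\gamma)$ is a closed subsystem of rank at most $r-1$, hence proper, that strictly contains $\Psi$; this contradicts maximality.

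Next I would separate the two cases. If $\mathrm{rank}(\Psi)=r$, then by Borel--de Siebenthal $\Psi$ is contained in one of the finitely many subsystems obtained by deleting a node of prime mark from the extended Dynkin diagram $\widetilde\Phi$. If $\mathrm{rank}(\Psi)=r-1$, then by maximality $\Psi=\Phi\cap V$ for a hyperplane $V$, and one checks type by type (there being, up to the Weyl group, only finitely many hyperplane sections) that $|\Phi\cap V|$ is at most the largest cardinality of a maximal standard Levi subsystem, i.e.\ of a subsystem obtained by deleting a single node from the Dynkin diagram of $\Phi$. Hence it remains to verify, for each irreducible type, that the maximum cardinality $M(\Phi)$ of these finitely many candidate subsystems satisfies $M(\Phi)\le|\Phi|-2r$.

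Finally I would carry out this finite check with the standard root counts. The extremal cases are: for $A_r$, the Levi $A_{r-1}$ with $M=r(r-1)=|\Phi|-2r$; for $B_r$ ($r\ge2$), the maximal-rank subsystem $D_r$ with $M=2r(r-1)=|\Phi|-2r$; for $C_r$ ($r\ge3$), the maximal-rank $A_1\times C_{r-1}$, giving $|\Phi|-M=4r-4\ge2r$; for $D_r$ ($r\ge4$), the Levi $D_{r-1}$, giving $|\Phi|-M=4(r-1)\ge2r$; and for the exceptional types the largest proper subsystems are $D_5$, $E_6$, $A_1\times E_7$, $B_4$, $A_2$ in types $E_6$, $E_7$, $E_8$, $F_4$, $G_2$, with gaps $|\Phi|-M$ equal to $32,54,112,16,6$, each at least the respective $2r$. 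In particular the inequality is sharp precisely in types $A$ and $B$.

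The step I expect to be the main obstacle is not the arithmetic but establishing that the list of candidate subsystems is exhaustive --- concretely, that a rank-$(r-1)$ hyperplane section $\Phi\cap V$ never contains more roots than a maximal standard Levi. This can be handled either by appealing to the classification of maximal closed subsystems of root systems or by a short direct analysis of the possible sections in each type; once that input is in place, the lemma follows from the computation above.
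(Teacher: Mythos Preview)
Your argument is sound for \emph{closed} subsystems and takes a genuinely different route from the paper. The paper gives a short induction on $r$: choose a simple root $\alpha\notin\Psi$, pass to the rank-$(r-1)$ Levi $\Phi'$ on $\Delta\setminus\{\alpha\}$, and for each irreducible component $\Phi''$ of $\Phi'$ (say of rank $s$) either apply the inductive hypothesis (when $\Phi''\cap\Psi\subsetneq\Phi''$) or exhibit $2s$ explicit roots $\pm s_{\beta_j}(\alpha)\in\Phi\setminus\Psi$ (when $\Phi''\subseteq\Psi$, using that $\beta_j\in\Psi$ forces $s_{\beta_j}(\alpha)\notin\Psi$). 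Together with $\pm\alpha$ this gives $2r$ roots outside $\Psi$, with no classification or case analysis. Your approach via Borel--de Siebenthal and a type-by-type check is more computational but has the merit of making the extremal subsystems visible. Note, incidentally, that the rank-$(r-1)$ step you flag as the main obstacle is immediate once you observe that $\Phi\cap V$ for a hyperplane $V$ is always $W$-conjugate to a standard Levi (conjugate a generic normal to $V$ into the dominant chamber).

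One genuine gap: the lemma as stated allows $\Psi$ to be an arbitrary root subsystem, not necessarily closed, and your first reduction ``enlarge to a maximal proper \emph{closed} subsystem'' can fail in that generality. The short-root subsystems of $C_n$, $F_4$ and $G_2$ are non-closed and have closure equal to all of $\Phi$, so they sit in no proper closed subsystem. In particular $C_n\supset D_n$ (short roots) gives $|\Phi|-|\Psi|=2n$, a sharp case your final sentence misses. The paper's inductive proof uses only that $\Psi$ is stable under its own reflections, so it covers these. For the application in the paper only closed $\Psi$ arise and your argument suffices; if you want the lemma in the stated generality, either handle the short-root subsystems separately or enlarge instead to a maximal proper (not necessarily closed) subsystem and invoke the classification of those.
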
 

\begin{proof} This is presumably well-known but we could not find a reference so we provide a proof by induction on $r$. The statement is obvious for $r=1$. For $r>1$ consider a base $\Delta\subset \Phi,$ and let $\alpha$ be an element of $\Delta$ not in $\Psi$.  Consider the root subsystem $\Phi' \subsetneq \Phi $ with base $\Delta\setminus \{\alpha \}.$ It has rank $r-1.$ 

Now $\Phi '$ is the union of (at most three) irreducible components whose ranks add up to $r-1$. Let $\Phi''$ be one such component and $s=\rank(\Phi'')$. 
We have two cases: 
\begin{enumerate} 
\item[(a)] If $\Phi'' \cap \Psi \neq \Phi''$ then by the inductive hypothesis $|\Phi''\setminus (\Phi'' \cap \Psi)|\geq 2s.$
\item[(b)] If $\Phi''\cap \Psi=\Phi''$, then $\Phi''$ contains $\beta_1,\ldots ,\beta_{s}$ so that $\pm s_{\beta_j}(\alpha )$, $1\leq j\leq s$ are $2s$ distinct elements of $\Phi\setminus \Psi\setminus \{\pm\alpha \}.$ Indeed, we can take each $\beta_j$ to be the sum of a few elements of $\Delta\cap \Phi'',$ so that one of them is connected to $\alpha $ in the Dynkin diagram of $\Phi $, and together they form a connected subset. Then $\alpha, \beta_1,\ldots ,\beta_{s}$ is a linearly independent set of roots, and the $\pm s_{\beta_j}(\alpha )$ are distinct from each other and $\pm\alpha$. 
\end{enumerate} 

In either case, we find $2s$ separate roots of $\Phi\setminus \Psi $ corresponding to each irreducible component $\Phi ''$ of $\Phi '.$ Together with $\pm\alpha $ these give $2r$ distinct elements of $\Phi\setminus \Psi .$ This completes the proof of the lemma. \end{proof} 

\begin{rem}
\begin{enumerate} 
\item  
The statement of Lemma \ref{l:rootslemma} is sharp. For instance, consider 
\[
B_2\supset A_1\times A_1,\qquad \textrm{or} \qquad A_r\supset A_{r-1}.
\]
\item 
The above discussions show that the degree of $|\!|\bX|\!|$ equals  $2g\dim(G)-2\dim([G,G])+\sum_{i=1}^n\dim(C_i)$. This is in agreement with Theorem \ref{t:Smooth}. 
\end{enumerate} 
\end{rem}

\subsection{Proof of Part (ii)} To prove that $\bX$ has Euler characteristic $0$, it is sufficient to show that $(q-1)$ divides the counting polynomial $F_\bX$ of \S \ref{s:countPrecise}. First, observe that $\mathfrak{Z}$, defined in \eqref{eq:Z}, is a rational function in $q$ and 
\[
\mathrm{ord}_{q-1}(\mathfrak{Z}) = (m-n+1)\dim(Z)-m\dim(T).
\] 
Next, $B(k)$ is a polynomial in $q$ with 
\[
\mathrm{ord}_{q-1}(B(k))=\dim(T). 
\]
Finally, by Corollary \ref{c:Delta}, 
\[
\mathrm{ord}_{q-1}(\Delta_{\Psi,S}) \geq \dim ((T^\vee)^{W(\Psi)}) \geq \dim ((T^\vee)^W) = \dim(Z(G^\vee))=\dim(Z(G)). 
\]
Combining these observations with the expression for $F_\bX$, one verifies that
\[ 
\mathrm{ord}_{q-1}(|\!|\bX|\!|) \geq (2g+n-2)\dim(T) + (m-n+1)\dim(Z)-m\dim(T)+\dim(Z). 
\]
Since, we have assumed $G$ is non-commutative, $\dim(T)>\dim(Z)$ and so when $g>0$ or $n-m>2$, we obtain: 
\[
\mathrm{ord}_{q-1}(|\!|\bX|\!|) \geq  (2g+n-m-2)\dim(T)-(n-m-2)\dim(Z)>0.\eqno\qed 
\]

\section{Examples} \label{s:Examples} 
We conclude the paper by providing some explicit examples. 

\subsection{The case of $\GL_2$ and $\GL_3$} \label{s:GL}

\subsubsection{The case $(G,g,n)=(\GL_2, g, 2)$ and $g\geq 0$}
Let  $C_1$ be the conjugacy class of $\diag(a,b)$, where $a\neq b$. If $ab=1 $ and $a,b\neq 1$, i.e., if the class is generic in the sense of \cite[\S 2]{HLRV}, then 
\[
|\!|\bX|\!|(q)=(q-1)^{4g-1}q^{2g-1}((q+1)^{2g}-1). 
\]

\subsubsection{The case $(G,g,n)=(\GL_3, g, 2)$ and $g\geq 0$}
Recall that the $A_2$ root system is
 \[
\Phi=\{\pm \alpha, \pm\beta, \pm(\alpha+\beta)\}
\]
and its poset of closed subsystems can be depicted as follows: 
\begin{center}
        \begin{tikzpicture}[scale=.5]
 \node (two) at (0,3) {$A_2$ };
  \node (a) at (-5,0) {$(A_1)_1$};
    \node (b) at (0,0) {$(A_1)_2$};
  \node (1) at (5,0) {$(A_1)_3$};
  \node (zero) at (0,-3) {$\emptyset $};
  \draw (zero) -- (a) --   (two);
   \draw (zero) -- (b) --   (two);
   \draw (zero) -- (1) --(two);
\end{tikzpicture}
\end{center}
where $(A_1)_1=\langle \alpha \rangle$, $ (A_1)_2=\langle  \beta \rangle$ and $(A_1)_3=\langle \alpha+\beta\rangle $. (Here, $\langle \cdot \rangle $ denotes the smallest root subsystem containing the roots listed within, e.g. $\langle \alpha \rangle =\{\pm \alpha \}$.)
The non-zero values of the M\"obius function for this poset are as follows: 
\begin{itemize}
\item $\mu(\Psi ,\Psi )=1$ for every cosed subsystem $\Psi $.
    \item $\mu((A_1)_i,A_2)= -1$, $\mu(\emptyset,(A_1)_i)=-1$ for $i=1,2,3$.
     \item $\mu(\emptyset,A_2)= 2$.
\end{itemize}

Let  $C_1$ be the conjugacy class of $\diag(a,b,c)$, where $a\neq b\neq c\neq a$. If $abc=1 $ and $1 \notin \{ab,bc,ac\}$, i.e., if the class is generic in the sense of \cite[\S 2]{HLRV}, the table below presents all the terms that appear in the counting polynomial: 
\begin{figure}[H]
\begin{tabular}{|c|c|c|c|c|c|c|c|}
\hline
$\cS(\Psi)$  & $X^\vee/\langle \Psi \rangle$& $\mathrm{Torsion}$ & $\mathrm{Rank}$ & $\Delta_{\Psi,S}$ & $\alpha_{\Psi,S}$ &$|W(\Psi)|$  &$P_\Psi(q)$\\
\hline
\makecell{$A_2$} & 
$\mathbb{Z} $&
$1$ & $1$& 
$q-1$ & 
$q-1$ &$6$ &$(q^2+q+1)(q+1)$\\
\hline
\makecell{$(A_1)_i$ for $i=1,2,3$} &$\mathbb{Z}\times \mathbb{Z}$&$1$&$2$& $q-1$ &$-(q-1)$&$2$&${q+1}$\\
\hline
\makecell{$\emptyset$} & $\mathbb{Z}\times \mathbb{Z}\times \mathbb{Z}$&$1$&$3$ &$q-1$&$2(q-1)$&$1$ & $1$\\
\hline
\end{tabular}
\end{figure}

Plugging these values into the counting polynomial (\S  \ref{s:countPrecise}), we find: \begin{equation*}
    \begin{split}
    |\!|\mathbf{X}|\!|(q)
    =  & {q^{6g-2}(q-1)^{6g-2}}\left( \underbrace{(q^2+q+1)^{2g}(q+1)^{2g}}_{\Psi=A_2}+\underbrace{(-3)\cdot (q+1)^{2g}}_{\Psi\in \{(A_1)_1,(A_1)_2,(A_1)_3\}}+\underbrace{2}_{\Psi=\emptyset}\right). 
\end{split}
\end{equation*}
On the other hand, if $abc=1$ and $ab=1$, then the table below presents all the terms that appear in the counting polynomial: 
\begin{figure}[H]
\begin{tabular}{|c|c|c|c|c|c|c|c|}
\hline
$\cS(\Psi)$  & $X^\vee/\langle \Psi \rangle$& $\mathrm{Torsion}$ & $\mathrm{Rank}$ & $\Delta_{\Psi,S}$ & $\alpha_{\Psi,S}$ &$|W(\Psi)|$  &$P_\Psi(q)$\\
\hline
\makecell{$A_2$} & 
$\mathbb{Z} $&
$1$ & $1$& 
$q-1$ & 
$q-1$ &$6$ &$(q^2+q+1)(q+1)$\\
\hline
\makecell{$(A_1)_1$} &$\mathbb{Z}\times \mathbb{Z}$&$1$&$2$& $q-1$ &$(q-1)(q-2)$&$2$&${q+1}$\\
\hline
\makecell{$(A_1)_i$ for $i=2,3$} &$\mathbb{Z}\times \mathbb{Z}$&$1$&$2$& $q-1$ &$-(q-1)$&$2$&${q+1}$\\
\hline
\makecell{$\emptyset$} & $\mathbb{Z}\times \mathbb{Z}\times \mathbb{Z}$&$1$&$3$ &$q-1$&$2(q-1)$&$1$ & $1$\\
\hline
\end{tabular}
\end{figure}
Plugging these values into the counting polynomial $F_\bX$, we find: \begin{equation*}
    \begin{split}
    |\!|\mathbf{X}|\!|(q)
    =  & {q^{6g-2}(q-1)^{6g-2}}\left( \underbrace{(q^2+q+1)^{2g}(q+1)^{2g}}_{\Psi=A_2}+\underbrace{(q+1)^{2g}(q-4)}_{\Psi\in \{(A_1)_1,(A_1)_2,(A_1)_3\}}+\underbrace{(-1)\cdot(q-3)}_{\Psi=\emptyset}\right). 
\end{split}
\end{equation*}
Note that we have the same computation when $bc=1$ or $ac=1.$

\subsubsection{The case $(G,g,n)=(\GL_2, 0,3)$} \label{s:subtle2} 
We have two cases: either we have two regular semisimple classes and one regular unipotent class or we have two regular unipotent classes and one regular semisimple class.

Let us first assume we have two regular semisimple classes. Specifically, 
let $C_1$ and $C_2$ be the conjugacy classes of $\diag(a,b)$ and $\diag(c,d)$, respectively, with $a\neq b$ and $c\neq d$. Recall, by the assumption of  \S \ref{s:emptiness},  $abcd=1$, for otherwise the character variety is empty. We then have two cases: 
\begin{enumerate} 
\item[(i)] 
If $1\notin\{ac, ad, bc, bd\}$ (i.e., the tuple $(C_1,C_2, C_3)$ is generic in the sense of \cite[\S2]{HLRV}), then one may check using our counting formula that ${\mathbf{X}}(C_1,C_2,C_3)$ is a singleton. 
We now give explicitly the unique element of $\bX$. 
To this end, we need to specify matrices $X_i\in C_i$, $i=1, 2, 3$,  satisfying $X_1X_2X_3=1$:  
\[
X_1=\begin{bmatrix} a & 0 \\ ab(c+d)-a-b & b\end{bmatrix},\quad
 X_2 = \begin{bmatrix} \frac{1}{a} & -\frac{1}{a} \\ -c-d+\frac{1}{a}+\frac{1}{b} & c+d-\frac{1}{a}\end{bmatrix}, \quad 
 X_3=\begin{bmatrix}1&1\\0&1\end{bmatrix}.
\] 

\item[(ii)] Suppose $1\in \{ac, ad, bc, bd\}$.  In this case, our counting formula implies that  $\bX$ has two points.  Let us assume that $c=a^{-1}$. Then $abcd=1$ implies that $d=b^{-1}$. We can then write down explicit expressions for the two points $(X_1, X_2,X_3)$ and $(Y_1,Y_2,Y_3)$ as follows:
\[
X_1=\begin{bmatrix} a & -a+b \\ 0 & b\end{bmatrix}, \quad 
X_2 = \begin{bmatrix} \frac{1}{a} & \frac{1}{b}-\frac{2}{a}  \\ 0 & \frac{1}{b}\end{bmatrix},\quad
 X_3=\begin{bmatrix}1&1\\0&1\end{bmatrix},
\]
and
\[
Y_1=\begin{bmatrix} b & a-b \\ 0 & a\end{bmatrix},\quad
 Y_2 = \begin{bmatrix} \frac{1}{b} & \frac{1}{a}-\frac{2}{b} \\ 0 & \frac{1}{a}\end{bmatrix},\quad 
 Y_3=\begin{bmatrix}1&1\\0&1\end{bmatrix}.
\]
Note that both representations are reducible. 
\end{enumerate}

 Next, suppose we have two regular unipotent and one regular semisimple monodromies. Again using our counting formula,  one sees that ${\mathbf{X}}(C_1,C_2,C_3)$ is a singleton. The unique element can be expressed as follows: 
\[
X_1= \begin{bmatrix}a+\frac{1}{a}  & \frac{a}{a^2-2a+1} \\ -a+2-\frac{1}{a}  & 0 \end{bmatrix},\quad
 X_2=\begin{bmatrix} 0 & -\frac{a}{a^2-2a+1} \\ 
a-2+\frac{1}{a}   &2\end{bmatrix},\quad
 X_3=\begin{bmatrix}1&1\\0&1\end{bmatrix}.
\]

\subsection{Rigid representations} \label{ss:rigid} Suppose $(G,g,n)=(\PGL_2, 0, 3)$ and $\mathrm{char}(k)\neq 2$. 
Let $C_1$ and $C_2$ be the conjugacy classes of $\diag(a,1)$ and $\diag(b,1)$, respectively. The requirements that these be strongly regular classes means that $a$ and $b$ are not equal to $\pm 1$. Let $C_3$ be the regular unipotent class. We assume that  $ab=t^2$ for some $t\in k^\times$, otherwise $\bX$ is empty. One can show that in the non-generic case, $|\bX|=3$. Assume we are in the generic case; i.e., $ab\neq 1\neq ab^{-1}$. Then one can check using Theorem \ref{t:count} that $|\bX(k)|=2$. We now write down explicit formulas for the two inequivalent representations $E_1=(X_1,X_2, X_3)$ and  $E_2=(Y_1, Y_2, Y_3)$: 
\[
X_1= \begin{bmatrix}
  a t&0 \\ (t-a)(t-1)& t
\end{bmatrix}, \qquad  
 X_2=\begin{bmatrix}
    -t&t\\(t-a)(t-1) & -t^2+t-a
\end{bmatrix}, \qquad 
X_3= \begin{bmatrix}
    1&1\\0&1
\end{bmatrix},
\]
and 
\[
Y_1= \begin{bmatrix}
   a t&0 \\ -(t+a)(t+1)&t
\end{bmatrix}, \qquad 
Y_2=\begin{bmatrix}
    -t&t\\-(t+a)(t+1) & t^2+t+a
\end{bmatrix},\qquad 
Y_3= \begin{bmatrix}
    1&1\\0&1
\end{bmatrix}.
\]
It is clear that $X_1$ and $Y_1$ are conjugate to $\diag(a,1)$. To see that $X_2$ is conjugate to $\diag(b,1)$ note that $\mathrm{tr}(X_2)=-t^2-a=-ab-a=-a(1+b)$ and $\det(X_2)=at^2=(-a)^2b$. Thus, trace and determinant of $X_2$ are the same as the trace and determinant of $-a. \diag(b,1)$. Hence, $X_2$ is conjugate to $\diag(b,1)$ in $\PGL_2$. Similarly, one sees that $Y_2$ is conjugate to $\diag(b,1)$. 

Suppose, towards contradiction, there exists $g\in \PGL_2$ such that $gX_ig^{-1}=Y_i$ for $i=1,2,3$. Such a $g$ would have to be in the centraliser of $X_3=Y_3$; i.e., we must have $g=\begin{bmatrix}
    1&x\\0&1
\end{bmatrix}$ for some $x\in k$. Now consider the equation $gX_1g^{-1}=Y_1$. Comparing the $(1,1)$ and $(2,2)$ entries of this matrix equation, we find: 
\begin{equation} 
x(t-a)(t-1)+at=cat \qquad \textrm{and}\qquad    t-x(t-a)(t-1)=ct,
\end{equation} 
for some $c\in k^\times$. This implies that $ a(ct-t)=t-ct$; i.e., $a=-1$, which is a contradiction. We conclude that $E_1$ and $E_2$ are not conjugate. 

The genericness assumption implies that the triples $E_1$ and $E_2$ are irreducible, but we may also check this directly as follows. We need to show that there is no proper parabolic subgroup $P\subseteq \PGL_2$ containing $X_1$, $X_2$, and $X_3$. But the only proper parabolic subgroup containing $X_3$ is the upper triangular matrices $B$ and this subgroup does not contain $X_1$ or $X_2$ because the generic assumption implies that $t-a$ and $t-1$ are non-zero.

\subsection{Character varieties with positive Euler characteristic}\label{s:subtle}
In this subsection, we give some examples of character varieties with regular monodromy and non-zero Euler characteristic. Recall that the Euler characteristic is obtained by substituting $q=1$ into the counting polynomial. It would be interesting to find a general expression for these Euler characteristics. 

\subsubsection{} Let $(G,g,m,n)= (\GL_2, 0, 3, 4)$,  with $C_1, C_2, C_3$ the conjugacy classes of $\mathrm{diag}(a_1,a_2)$, $\mathrm{diag}(b_1,b_2)$, $\mathrm{diag}(c_1,c_2)$, respectively and $C_4$ the regular unipotent conjugacy class. Let us assume that $a_1\neq a_2$, $b_1\neq b_2$ and $c_1 \neq c_2$ and $a_ib_jc_k\neq1 $ for all $(i,j,k)$; i.e., $(C_1,C_2, C_3, C_4)$ is generic in the sense of \cite[\S2]{HLRV}. Then 
our counting formula implies 
\[
|\bX(C_1,C_2,C_3,C_4)|=q(q+3).
\] 
Thus the Euler characteristic is $4$. 

\subsubsection{} Let  $(G,g,m,n)= (\GL_3, 0, 3, 4)$ with $C_1, C_2$, $C_3$ the conjugacy classes of $\mathrm{diag}(a_1,a_2,a_3)$, $\mathrm{diag}(b_1,b_2,b_3)$, $\mathrm{diag}(c_1,c_2,c_3)$, respectively and $C_4$ the regular unipotent conjugacy class. Let us assume that $a_i\neq a_j$, $b_i\neq b_j$ and $c_i \neq c_j$ for all $i \neq j$ and $a_ib_jc_k\neq1 $. Then our counting formula implies 
 \[
|\bX(C_1,C_2,C_3,C_4)|=q^4(q^4+6q^3+19q^2+42q+46).
\] 
Thus the Euler characteristic is $114$.

\subsubsection{}  Let $(G,g,m,n)= (\GL_2, 0, 2, 4)$  with $C_1$, $ C_2$ the conjugacy classes of $\mathrm{diag}(a_1,a_2)$, $\mathrm{diag}(b_1,b_2)$, respectively and $C_3,C_4$  the regular unipotent  class. Let us assume that $a_1\neq a_2$ and $b_1\neq b_2$  and $a_ib_j\neq1 $ for all $(i,j)$. Then 
\[
|\bX(C_1,C_2,C_3,C_4)|=q^2+2q-1.
\] 
Thus, the Euler characteristic is $2$. 

\subsubsection{} Finally, suppose  $(G,g,m,n)= (\GL_3, 0, 2, 4)$ with $C_1, C_2$ the conjugacy classes of $\mathrm{diag}(a_1,a_2,a_3)$, $\mathrm{diag}(b_1,b_2,b_3)$, respectively and $C_3, C_4$ the regular unipotent  class. Let us assume that $a_i\neq a_j$ and $b_i\neq b_j$ for all $i \neq j$ and $a_ib_j\neq1 $ for all $(i,j)$. Then 
 \[
 |\bX(C_1,C_2,C_3,C_4)|=q^2(q+1)^2(q^2+q+1)^2-9q^2(q+1)^2+12q^2.
 \] 
 Thus, the Euler characteristic is $12$.

\subsection{The case $G=\mathrm{SO}_5$}\label{ss:SO5} Recall that the $B_2=C_2$ root system is
 \[
\Phi^+=\{\alpha, \beta, \alpha+\beta, 2\alpha+\beta\}
\]
and its poset of closed subsystems can be depicted as follows: 
\begin{center}
        \begin{tikzpicture}[scale=.5]
 \node (two) at (0,3) {$C_2$ };
    \node (x) at (-3,1) {$ A_1\times A_1$};
  \node (a) at (-6,-2) {$(A_1)_1$};
    \node (b) at (-1,-2) {$(A_1)_2$};
  \node (1) at (3.5,-2) {$(A_1)_3$};
  \node (2) at (7,-2) {$(A_1)_4$};
  \node (zero) at (0,-4) {$\emptyset $};
  \draw (zero) -- (a) -- (x)--(two);
   \draw (zero) -- (b) -- (x)--(two);
   \draw (zero) -- (1) --(two);
   \draw (zero) -- (2)--(two);
\end{tikzpicture}
\end{center}

 \begin{itemize}
    \item $A_1 \times A_1=\langle \alpha,\alpha+2\beta\rangle$.
  \item $(A_1)_1=\langle \alpha \rangle$ and $ (A_1)_2=\langle \alpha+2\beta \rangle$.
    \item  $(A_1)_3=\langle \beta\rangle $ and $ (A_1)_4=\langle \alpha+\beta \rangle$. 
\end{itemize}

The non-zero values of the M\"obius function for this poset are as follows: 
\begin{itemize}
\item $\mu(\Psi ,\Psi )=1$ for every cosed subsystem $\Psi $.
    \item $\mu(A_1 \times A_1,C_2)= -1$.
    \item $\mu((A_1)_i,C_2)= 0$, $\mu((A_1)_i,A_1\times A_1)=-1$ for $i=1,2$.
     \item $\mu((A_1)_i,C_2)= -1$ for $i=3,4$.
     \item $\mu(\emptyset,C_2)=2$, $\mu(\emptyset, A_1\times A_1)=1$, and $\mu(\emptyset , (A_1)_i)=-1$,  for all $i=1,\ldots , 4$.
\end{itemize}

The table below presents all the terms that appear in the counting polynomial: 
\begin{figure}[H]
\begin{tabular}{|c|c|c|c|c|c|c|c|}
\hline
$\cS(\Psi)$  & $X^\vee/\langle \Psi \rangle$& $\mathrm{Torsion}$ & $\mathrm{Rank}$ & $\Delta_{\Psi,S}$ & $\alpha_{\Psi,S}$ &$|W(\Psi)|$  &$P_\Psi(q)$\\
\hline
\makecell{$C_2$} & 
$\mathbb{Z}/2\mathbb{Z}$&
$2$ & $0$& 
$2$ & 
$2$ &$8$ &$\frac{(q+1)(q^4-1)}{q-1}$\\
\hline
\makecell{$A_1\times A_1$} &$\mathbb{Z}/2\mathbb{Z}\times \mathbb{Z}/2\mathbb{Z}$&$4$& $0$ &$4$&$2$ &$4$&${(q+1)^2}$\\
\hline
\makecell{$(A_1)_1$ or $(A_1)_2$}  &$\mathbb{Z}\times \mathbb{Z}/2\mathbb{Z}$&$2$&$1$ &$0$& $-4$ &$2$& $q+1$ \\
\hline
\makecell{$(A_1)_3$ or $(A_1)_4$}  &$\mathbb{Z}$&$1$ &$1$  &$0$&$-2$ &$2$&$q+1$\\
\hline
\makecell{$\emptyset$} & $\mathbb{Z}\times \mathbb{Z}$&$1$&$2$ &$0$&$8$&$1$ & $1$\\
\hline
\end{tabular}
\end{figure}

Plugging these values into the counting polynomial, we find: \begin{equation*}
    \begin{split}
    |\mathbf{X}(k)|/\mathfrak{Z}
    =  & \underbrace{2 (q+1)^{2g+n-2} (q^3+q^2+q+1)^{2g+n-2}}_{\Psi=C_2} + \underbrace{2^{m} (q+1)^{4g+2n-4}}_{\Psi=A_1\times A_1}  \\
    & + \underbrace{ (-3)\cdot 2^{2m } (q+1)^{2g+n-2}}_{\Psi\in\{(A_1)_1,\ldots, (A_1)_4\}}+\underbrace{     8^m}_{\Psi=\emptyset},
\end{split}
\end{equation*}
where \[\mathfrak{Z}={(q-1)^{4g+2n-2m-4}q^{8g+2n+2m-8}}.\]

\subsection{The case $G=G_2$} \label{ss:G2} Recall that the $G_2$ root system is
 \[
\Phi^+=\{\alpha, \beta, \alpha+\beta, 2\alpha+\beta, 3\alpha+\beta, 3\alpha+2\beta\}
\]
and its poset of closed subsystems can be depicted as follows:
 
\begin{center}
        \begin{tikzpicture}[scale=.7]
  \node (two) at (0,2) {$G_2$ };
    \node (a) at (-6,0) {$A_2 $};
    \node (x) at (-2,0) {$(A_1 \times A_1)_1$};
    \node (y) at (2,0) {$(A_1 \times A_1)_2$};
    \node (z) at (6,0) {$(A_1 \times A_1)_3$};
  \node (b) at (-5,-2) {$(A_1)_1$};
  \node (c) at (-3,-2) {$(A_1)_2$};
  \node (d) at (-1,-2) {$(A_1)_3$};
    \node (1) at (1,-2) {$(A_1)_4$};
  \node (2) at (3,-2) {$(A_1)_5$};
  \node (3) at (5,-2) {$(A_1)_6$};
  \node (zero) at (0,-4) {$\emptyset$};
  \draw (zero) -- (d) -- (a)--(two);
    \draw (zero) -- (b) -- (a)--(two);
      \draw (zero) -- (c) --(a)-- (two);
      \draw (zero) -- (1) ;
 \draw (zero) -- (2) ;
  \draw (zero) -- (3) ;
  \draw (b) -- (x) --(1);
   \draw (c) -- (y) --(2);
    \draw (d) -- (z) --(3);
\draw (y)--(two)-- (x);
\draw(z)--(two);
\end{tikzpicture}
\end{center}

 \begin{itemize}
    \item $A_2=\langle \beta, 3\alpha+\beta\rangle$.
    \item $(A_1\times A_1)_1=\langle \alpha, 3\alpha+2\beta\rangle $, $(A_1\times A_1)_2=\langle \alpha+\beta, 3\alpha+\beta\rangle $ and $(A_1\times A_1)_3=\langle \beta, 2\alpha+\beta\rangle $.
  \item $(A_1)_1=\langle 3\alpha+2\beta  \rangle, (A_1)_2=\langle 3\alpha+\beta \rangle$ and $(A_1)_3=\langle \beta \rangle$.
    \item  $(A_1)_4=\langle \alpha  \rangle, (A_1)_5=\langle \alpha+\beta  \rangle$ and $(A_1)_6=\langle 2\alpha+\beta \rangle$. 
\end{itemize}

The non-zero values of the M\"obius function for this poset are as follows: 
\begin{itemize}
\item $\mu(\Psi ,\Psi )=1$ for every cosed subsystem $\Psi $.
    \item $\mu(A_2,G_2)= -1$.
    \item $\mu((A_1\times A_1)_i,G_2)=-1$ for all $i=1,2,3$.
    \item $\mu((A_1)_i,G_2)= 1$, $\mu((A_1)_i,A_2)=-1$, $\mu((A_1)_i,(A_1\times A_1)_i)=-1$ for $i=1,2,3$.
     \item $\mu((A_1)_i,G_2)= 0$, $\mu((A_1)_i,(A_1\times A_1)_{i-3})=-1$ for $i=4,5,6$.
     \item $\mu(\emptyset, G_2)=0$,  $\mu(\emptyset,A_2)=2$, $\mu(\emptyset, (A_1 \times A_1)_i)=1$, $\mu(\emptyset , (A_1)_j)=-1$ for all $i=1,2,3$ and $j=1,\ldots , 6$.
\end{itemize}

The table below presents all the terms that appear in the counting polynomial: 
\begin{figure}[H]
\begin{tabular}{|c|c|c|c|c|c|c|c|}
\hline
$\cS(\Psi)$  & $X^\vee/\langle \Psi \rangle$& $\mathrm{Torsion}$ & $\mathrm{Rank}$ & $\Delta_{\Psi,S}$ & $\alpha_{\Psi,S}$ &$|W(\Psi)|$  &$P_\Psi(q)$\\
\hline
\makecell{$G_2$}  & $1$&
$1$ & $0$& 
$1$ & 
$1$ &$12$ &$\frac{(q+1)(q^6-1)}{q-1}$\\
\hline
\makecell{$A_2$} & $\mathbb{Z}/3\mathbb{Z}$&$3$& $0$ &$3$&$2$ &$6$& $\frac{(q+1)(q^3-1)}{q-1}$\\
\hline
\makecell{$\substack{(A_1 \times A_1)_i\\ \text{for } i=1,2,3}$} & $\mathbb{Z}/2\mathbb{Z}$&$2$&$0$ &$2$& $1$ &$4$& $(q+1)^2$ \\
\hline
\makecell{$\substack{(A_1 )_i\\ \text{for } i=1,2,3}$}& $\mathbb{Z}$&$1$ &$1$  &$0$&$-4$ &$2$&$q+1$\\
\hline
\makecell{$\substack{(A_1 )_i\\ \text{for } i=4,5,6}$} &  $\mathbb{Z}$&$1$&$1$ &$0$&$-2$&$2$ & $q+1$\\
\hline
\makecell{$\emptyset$} &$\mathbb{Z}\times \mathbb{Z}$&$1$&$2$ &$0$&$12$&$1$&$1$\\
\hline
\end{tabular}
\end{figure}

Plugging these values into the counting polynomial, we find: 
\begin{equation*}
    \begin{split}
    |\mathbf{X}(k)|/\mathfrak{Z}
    =  & \underbrace{(q+1)^{2g+n-2} (q^5+q^4+q^3+q^2+q+1)^{2g+n-2}}_{\Psi=G_2} + \underbrace{2^{m} (q+1)^{2g+n-2}(q^2+q+1)^{2g+n-2}}_{\Psi=A_2}  \\
    &+  \underbrace{ 3^{m } (q+1)^{4g+2n-4}}_{\Psi\in \{ (A_1\times A_1)_1,\ldots,(A_1\times A_1)_3\}}+ \underbrace{  (-3) \cdot 6^{m} (q+1)^{2g+n-2} }_{\Psi\in \{ (A_1)_1,\ldots , (A_1)_6\}} + \underbrace{     12^m}_{\Psi=\emptyset},
\end{split}
\end{equation*}
where 
\[
\mathfrak{Z}={(q-1)^{4g+2n-2m-4}q^{12g+4n+2m-12}}.
\]


\begin{bibdiv}
\begin{biblist}

   \bib{Alper}{article}{
   author={Alper, J.}, 
   Journal={Journal of Algebraic Geometry}, 
   Year={2014}, 
   Pages={489--531}, 
   Volume={1},
   Number={4},
   Title={Adequate moduli spaces and
geometrically reductive group schemes},
   }

	\bib{Ballandras}{article}{
	author = {Ballandras, M.},
	journal = {arXiv:2201.08795},
	title = {Intersection cohomology of character varieties for punctured Riemann surfaces},
	year = {2022},
	}

	\bib{BaragliaHekmati}{article}{
	author = {Baraglia, D.},
	author={ Hekmati, P.},
	journal = {Proceedings of the London Mathematical Society. Third Series},
	number = {2},
	pages = {293--332},
	title = {Arithmetic of singular character varieties and their {$E$}-polynomials},
	volume = {114},
	year = {2017},
	}

\bib{BMRT}{article}{
  title={Complete reducibility and separability},
  author={Bate, M.},
   author={ Martin, B.}, 
   author={ R{\"o}hrle, G.},
    author={ Tange, R.},
  journal={Transactions of the American Mathematical Society},
  volume={362},
  number={8},
  pages={4283--4311},
  year={2010}
}

\bib{BeilinsonDrinfeld}{article}{
      author={Beilinson, A.},
      author={Drinfeld, V.},
       title={Quantization Of Hitchin's integrable System And Hecke
  eigensheaves},
        date={1997},
        note={\url{http://math.uchicago.edu/~drinfeld/langlands/QuantizationHitchin.pdf}},
}

\bib{BenZviNadler}{article}{
  title={Betti geometric {L}anglands},
  author={Ben-Zvi, D.},
   author={ Nadler, D.},
  journal={Algebraic Geometry: Salt Lake City 2015},
  volume={97},
  pages={3--41},
  year={2018}
}

\bib{Boalch}{article}{
author={Boalch, P.}, 
Journal={Annals of Mathematics}, 
Title={Geometry and braiding of Stokes data: fission and wild character varieties}, 
Year={2014}, 
Pages={301--365}, 
Volume={179}, 
}

\bib{BridgerKamgarpour}{article}{
  title={Character stacks are PORC count},
  author={Bridger, N.},
  author={ Kamgarpour, M.},
  journal={arXiv:2203.04521},
  year={2022}
}

\bib{Cambo}{article}{
	author = {Cambo, V.},
	journal = {arXiv:1708.00393},
	title = {On the {$E$}-polynomial of parabolic $\mathrm{Sp}_{2n}$-character varieties},
	year = {2017}}

\bib{dCHM}{article}{
   author={de Cataldo, M. A.},
   author={Hausel, T.},
   author={Migliorini, L.},
   title={Topology of Hitchin systems and Hodge theory of character varieties: the case $A_1$},
   journal={Annals of Mathematics. Second Series},
   volume={175},
   date={2012},
   number={3},
   pages={1329\ndash 1407}
}

	\bib{deligne2013counting}{article}{
  title={Counting local systems with principal unipotent local monodromy},
  author={Deligne, P.},
  author={Flicker, Y. Z.},
  journal={Annals of Mathematics},
  pages={921--982},
  year={2013},
}

		\bib{DeligneLusztig}{article}{
	author = {Deligne, P.},
	author={ Lusztig, G.},
	issn = {0003-486X},
	journal = {Annals of Mathematics. Second Series},
	number = {1},
	pages = {103--161},
	title = {Representations of reductive groups over finite fields},
	volume = {103},
	year = {1976},
	}

\bib{Deriziotis85}{article}{
  title={On the number of conjugacy classes in finite groups of Lie type},
  author={Deriziotis, D. I.},
  journal={Communications in Algebra},
  volume={13},
  number={5},
  pages={1019--1045},
  year={1985},
  publisher={Taylor \& Francis}
}

\bib{Deriziotis}{article}{
  title={The M{\"o}bius function of the lattice of closed subsystems of a root system},
  author={Derizlotis, D.I.},
  author={Holt, D. F.},
  journal={Communications in Algebra},
  volume={21},
  number={5},
  pages={1543--1570},
  year={1993},
  publisher={Taylor \& Francis}
}

\bib{digne2020representations}{book}{
	title={Representations of finite groups of Lie type},
	author={Digne, F.},
	author={Michel, J.},
	volume={95},
	year={2020},
	publisher={Cambridge University Press}
}

\bib{FJ1}{article}{
  title={The lattices and M{\"o}bius functions of stable closed subrootsystems and hyperplane complements for classical Weyl groups},
  author={Fleischmann, P.},
   author={ Janiszczak, I.},
  journal={manuscripta mathematica},
  volume={72},
  number={1},
  pages={375--403},
  year={1991},
  publisher={Springer}
}

\bib{FJ2}{article}{
  title={Combinatorics and Poincar{\'e} polynomials of hyperplane complements for exceptional Weyl groups},
  author={Fleischmann, P.},
   author={ Janiszczak, I.},
  journal={Journal of Combinatorial Theory, Series A},
  volume={63},
  number={2},
  pages={257--274},
  year={1993},
  publisher={Academic Press}
}

\bib{FG} {article}
{
AUTHOR = {Frenkel, E.},
Author={Gross, B.},
     TITLE = {A rigid irregular connection on the projective line},
   JOURNAL = {Ann. of Math.},
  FJOURNAL = {Annals of Mathematics. Second Series},
    VOLUME = {170},
      YEAR = {2009},
     PAGES = {1469--1512},
}

\bib{geck2020character}{book}{
	title={The character theory of finite groups of Lie type: a guided tour},
	author={Geck, M.},
	author={Malle, G.},	
	volume={187},
	year={2020},
	publisher={Cambridge University Press}
}

	\bib{GreenLehrerLusztig}{article}{
	author = {Green, J. A.},
	author={Lehrer, G. I.},
	author={Lusztig, G.},
	journal = {The Quarterly Journal of Mathematics. Oxford. Second Series},
	mrclass = {20C20},
	mrreviewer = {James E. Humphreys},
	number = {105},
	pages = {1--4},
	title = {On the degrees of certain group characters},
	volume = {27},
	year = {1976},
	}

	\bib{HLRV}{article}{
	author = {Hausel, T.},
	author={ Letellier, E.},
	author={Rodriguez-Villegas, F.},
	journal = {Duke Mathematical Journal},
	number = {2},
	pages = {323--400},
	publisher = {Duke University Press},
	title = {Arithmetic harmonic analysis on character and quiver varieties},
	volume = {160},
	year = {2011}}

	\bib{HRV}{article}{
	author = {Hausel, T.},
	author={ Rodriguez-Villegas, F.},
	journal = {Inventiones Mathematicae},
	number = {3},
	pages = {555--624},
	publisher = {Springer},
	title = {Mixed {H}odge polynomials of character varieties},
	volume = {174},
	year = {2008}}

	\bib{Herpel}{article}{
  title={On the smoothness of centralizers in reductive groups},
  author={Herpel, S.},
   JOURNAL = {Trans. Amer. Math. Soc.},
  volume={365},
  number={7},
  pages={3753--3774},
  year={2013}
}

	\bib{Humphreys}{book}{
  title={Conjugacy classes in semismiple algebra groups},
  author={Humphreys, J. E.},
  year={1995},
  publisher={American Mathematical Society}
  series={Mathematical Surveys and Monographs} 
  volume={43},
  }

\bib{JMO}{article}{
author={Jackowski,  S.}, 
author={McClure, J.}, 
author={Oliver, B.},  
title={Self homotopy equivalences of classifying spaces of compact connected Lie groups}, 
Journal={Fundamenta Mathematicae}, 
Year={1995}, 
Volume={147}, 
Pages={99--126}, 
}



	\bib{Masoud} {article}{
	Title={Stacky abelianization of algebraic groups},
	Author={Kamgarpour, M.},
	Year={2009}, 
	Journal={Transformation Groups}, 
	Volume={14},
	Pages={825--846},
	}

	\bib{Bailey} {article}{
	Title={Counting points on character varieties},
	Author={Kamgarpour, M.},
	Author={Nam, G.},
	Author={Whitbread, B.},
	Author={Giannini, S.}, 
	Journal={arXiv:2409.04735}, 
	Year={2024}, 
	}

	\bib{kamgarpour2012ramified}{article}{
     TITLE = {Ramified {S}atake isomorphisms for strongly parabolic
              characters},
              AUTHOR = {Kamgarpour, M.},
               author={ Schedler, T.},
  JOURNAL = {Documenta Mathematica},
    VOLUME = {18},
      YEAR = {2013},
     PAGES = {1275--1300},
}


	   \bib{KatzRigid}{book}
{
	AUTHOR = {Katz, N. M.},
	TITLE = {Rigid local systems},
	SERIES = {Annals of Math. Studies},
	YEAR = {1996},
}

	\bib{Kilmoyer}{article}{
	author = {Kilmoyer, R. W.},
	journal = {Journal of Algebra},
	number = {1},
	pages = {300--319},
	publisher = {Elsevier},
	title = {Principal series representations of finite {C}hevalley groups},
	volume = {51},
	year = {1978}}

	\bib{Kostov}{article}{
  title={The Deligne--Simpson problem—a survey},
  author={Kostov, V. P.},
  journal={Journal of Algebra},
  volume={281},
  number={1},
  pages={83--108},
  year={2004},
  publisher={Elsevier}
}

	\bib{Letellier}{article}{
	author = {Letellier, E.},
	journal = {Selecta Mathematica},
	number = {1},
	pages = {293--344},
	publisher = {Springer},
	title = {Character varieties with {Z}ariski closures of $\mathrm{GL}_n $-conjugacy classes at punctures},
	volume = {21},
	year = {2015}}
	
	\bib{LetellierRV}{article}{
	author = {Letellier, E.},
	author={ Rodriguez-Villegas, F.},
	journal = {arXiv:2008.13435},
	title = {{$E$}-series of character varieties of non-orientable surfaces},
	year = {2020}}

\bib{mednykh}{inproceedings}{
  title={Determination of the number of nonequivalent coverings over a compact Riemann surface},
  author={Mednykh, A. D.},
  booktitle={Doklady Akademii Nauk},
  volume={239},
  number={2},
  pages={269--271},
  year={1978},
  organization={Russian Academy of Sciences}
}

\bib{Mellit}{article}{
  title={Poincar{\'e} polynomials of moduli spaces of Higgs bundles and character varieties (no punctures)},
  author={Mellit, A.},
  journal={Inventiones Mathematicae},
  volume={221},
  number={1},
  pages={301--327},
  year={2020},
  publisher={Springer}
}

\bib{Mumford}{book}{
  title={Geometric invariant theory},
  author={Mumford, D.},
   author={ Fogarty, J.},
    author={ Kirwan, F.},
  volume={34},
  year={1994},
  publisher={Springer Science \& Business Media}
}

\bib{Ree}{article}{
title={Commutators in semi-simple algebraic groups},
	author={Ree, R.},
	journal={Proceedings of the American Mathematical Society},
	volume={15},
	number={3},
	pages={457--460},
	year={1964}
}

\bib{richardson1967conjugacy}{article}{
	title={Conjugacy classes in Lie algebras and algebraic groups},
	author={Richardson, R.W.},
	journal={Annals of Mathematics},
	pages={1--15},
	year={1967},
	publisher={JSTOR}
}

\bib{Schiffmann}{article}{
  title={Indecomposable vector bundles and stable Higgs bundles over smooth projective curves},
  author={Schiffmann, O.},
  journal={Annals of Mathematics},
  pages={297--362},
  year={2016},
  publisher={JSTOR}
}

\bib{Serre}{book}{
  title={Complex semisimple Lie algebras},
  author={Serre, J. P.},
  year={2000},
  publisher={Springer Science \& Business Media}
}

\bib{Simpson}{article}{ 
author={Simpson, C. T.},
title={Products of matrices} ,
journal={In Differential geometry, global analysis, and topology (Halifax, NS, 1990), volume 12 of CMS Conf. Proc},
 pages={157–185},
 year={1991},
 }

\bib{Simpson92}{article}{
	author = {Simpson, C. T.},
	journal = {Publications Math{\'e}matiques de l'IH{\'E}S},
	pages = {5--95},
	title = {{H}iggs bundles and local systems},
	volume = {75},
	year = {1992}}

	\bib{Simpson94}{article}{
  title={Moduli of representations of the fundamental group of a smooth projective variety I},
  author={Simpson, C. T.},
  journal={Publications Math{\'e}matiques de l'Institut des Hautes {\'E}tudes Scientifiques},
  volume={79},
  number={1},
  pages={47--129},
  year={1994},
  publisher={Springer}
}

\bib{Springer}{article}{
  title={Some arithmetical results on semi-simple Lie algebras},
  author={Springer, T. A.},
  journal={Publications Math{\'e}matiques de l'IH{\'E}S},
  volume={30},
  pages={115--141},
  year={1966}
} 



	\bib{Stanley}{article}{
  title={Enumerative Combinatorics I},
  author={Stanley, R. P.},
  journal={Cambridge Studies in Advanced Mathematics},
  year={2011}
}

\bib{Steinberg}{article}{
  title={Regular elements of semi-simple algebraic groups},
  author={Steinberg, R.},
  journal={Publications Math{\'e}matiques de l'IH{\'E}S},
  volume={25},
  pages={49--80},
  year={1965}
}

\bib{Ravi}{article}{
	title={The rising sea: foundations of algebraic geometry},
	author={Vakil, R.},
	journal={preprint},
	year={2017}
	note={\url{http://math.stanford.edu/~vakil/216blog/FOAGnov1817public.pdf}} ,
}

\end{biblist}
\end{bibdiv}

\end{document}